\titleformat{\chapter}[display]
{\normalfont\huge\bfseries}{\chaptertitlename\\thechapter}{20pt}{\Huge}
\titleformat{\subsubsection}[runin]
{\normalfont\normalsize\bfseries}{\thesubsubsection}{1em}{}
\titleformat{\paragraph}[runin]
{\normalfont\normalsize\bfseries}{\theparagraph}{1em}{}
\titleformat{\subparagraph}[runin]
{\normalfont\normalsize\bfseries}{\thesubparagraph}{1em}{}
\titlespacing*{\chapter} {0pt}{50pt}{40pt}
\titlespacing*{\section} {0pt}{3.5ex plus 1ex minus .2ex}{2.3ex plus .2ex}
\titlespacing*{\subsection} {0pt}{3.25ex plus 1ex minus .2ex}{1.5ex plus .2ex}
\titlespacing*{\subsubsection}{0pt}{3.25ex plus 1ex minus .2ex}{1.5ex plus .2ex}
\titlespacing*{\paragraph} {0pt}{3.25ex plus 1ex minus .2ex}{1em}
\titlespacing*{\subparagraph} {\parindent}{3.25ex plus 1ex minus .2ex}{1em}
\subjclass[2000]{Primary 14R15; Secondary 13F20}
\newtheorem{theorem}{Theorem}[section]
\newtheorem{lemma}[theorem]{Lemma}
\newtheorem{proposition}[theorem]{Proposition}
\newtheorem{corollary}[theorem]{Corollary}
\theoremstyle{definition}
\newtheorem{definition}[theorem]{Definition}
\newtheorem{notation}[theorem]{Notation}
\newtheorem{example}[theorem]{Example}
\theoremstyle{remark}
\newtheorem{remark}[theorem]{Remark}
\DeclareMathOperator{\Aut}{Aut}
\DeclareMathOperator{\Coeff}{Coeff}
\DeclareMathOperator{\ide}{id}
\DeclareMathOperator{\ev}{ev}
\DeclareMathOperator{\Supp}{Supp}
\DeclareMathOperator{\en}{en}
\DeclareMathOperator{\st}{st}
\DeclareMathOperator{\Res}{Res}
\DeclareMathOperator{\Succ}{Succ}
\DeclareMathOperator{\Pred}{Pred}
\DeclareMathOperator{\Dir}{Dir}
\DeclareMathOperator{\dir}{dir}
\begin{document}

\title{The Jacobian Conjecture: Approximate roots and intersection numbers}

\author{Jorge A. Guccione}
\address{Departamento de Matem\'atica\\ Facultad de Ciencias Exactas y Naturales-UBA, Pabell\'on~1-Ciudad Universitaria\\ Intendente Guiraldes 2160 (C1428EGA) Buenos Aires, Argentina.}
\address{Instituto de Investigaciones Matem\'aticas ``Luis A. Santal\'o"\\ Facultad de Ciencias Exactas y Natu\-rales-UBA, Pabell\'on~1-Ciudad Universitaria\\ Intendente Guiraldes 2160 (C1428EGA) Buenos Aires, Argentina.}
\email{vander@dm.uba.ar}

\author{Juan J. Guccione}
\address{Departamento de Matem\'atica\\ Facultad de Ciencias Exactas y Naturales-UBA\\ Pabell\'on~1-Ciudad Universitaria\\ Intendente Guiraldes 2160 (C1428EGA) Buenos Aires, Argentina.}
\address{Instituto Argentino de Matem\'atica-CONICET\\ Saavedra 15 3er piso\\ (C1083ACA) Buenos Aires, Ar\-gentina.}
\email{jjgucci@dm.uba.ar}

\author{Rodrigo Horruitiner}
\address{Pontificia Universidad Cat\'olica del Per\'u, Secci\'on Matem\'aticas, PUCP, Av. Universitaria 1801, San Miguel, Lima 32, Per\'u.}
\email{rhorruitiner@pucp.edu.pe}

\author{Christian Valqui}
\address{Pontificia Universidad Cat\'olica del Per\'u, Secci\'on Matem\'aticas, PUCP, Av. Universitaria 1801, San Miguel, Lima 32, Per\'u.}
\address{Instituto de Matem\'atica y Ciencias Afines (IMCA) Calle Los Bi\'ologos 245. Urb San C\'esar.
La Molina, Lima 12, Per\'u.}
\email{cvalqui@pucp.edu.pe}

\begin{abstract}
We translate the results of Yansong Xu into the language of~\cite{GGV1}, obtaining nearly the same 
formulas for the intersection number of Jacobian pairs, but with an inequality instead of an equality.
\end{abstract}

\maketitle

\setcounter{tocdepth}{2}
\tableofcontents

\section*{Introduction} The Jacobian Conjecture (JC) in dimension two stated by Keller in \cite{K} says that any pair of polynomials $P,Q\in
L\coloneqq K[x,y]$, with $[P,Q]\coloneqq \partial_x P \partial_y Q - \partial_x Q \partial_y P\in K^{\times}$, defines an invertible automorphism of
$L$. If this conjecture is false, then we can find a counterexample such that the shape of the support of the components $P\coloneqq f(x)$,
$Q\coloneqq f(y)$, is contained in rectangles $(0,0)$, $m(a,0)$, $m(a,b)$, $m(0,b)$ and $(0,0)$, $n(a,0)$, $n(a,b)$, $n(0,b)$, such that $m(a,b)$ is
in the support of $P$ and $n(a,b)$ is in the support of $Q$.  In a recent
paper~\cite{X} Yangsong Xu gives two formulas for the intersection number of possible counterexamples, which we call $I_M$ and
$I_m$. If the formulas were true, we would be able to discard many infinite families of possible counterexamples to the Jacobian conjecture
described in~\cite{GGHV}.

When we translated the result and the proofs of~\cite{X} into  the language of~\cite{GGV1}, we obtained the same formula for $I_M$, but for 
$I_m$ we obtained only an inequality, consequently we cannot discard the infinite families as desired.

Hence, the main result of the present article is the translation of the concept of approximate roots into our language (see~\cite{GGV1},
also~\cite{GGV2} and~\cite{GGHV}), which requires a dictionary from Moh's language to our language. It is interesting on its own, and the modified formulas could
help understand some features of Moh's methods.
 
Along this paper we will freely use the notations of \cite{GGV1}.

\section{General lower side corners}
\label{lower side corners}
Let $l\in \mathds{N}$ and let $(P,Q)\in L^{(l)}$ be an $(m,n)$-pair (see~\cite{GGV1}*{Definition~4.3}). In this section we take
$(\rho,\sigma)\in  ](0,-1),(1,1)]$ such that
\begin{equation*}\label{esquina inferior}
\frac 1m \en_{\rho,\sigma}(P)=\frac 1n \en_{\rho,\sigma}(Q)\eqqcolon (a/l,b)\quad\text{and}\quad a/l>b>0
\end{equation*}
(assuming that such a direction exists).
Note that $\rho>0$.  Assume that $u_p\coloneqq v_{\rho,\sigma}(P)>0$. Then the points $(a/l,b)$ and $(c/l,d)\coloneqq \frac 1m \st_{\rho,\sigma}(P)$ must satisfy certain conditions. Our purpose in this section is to analyse them.

\begin{proposition}\label{jacobiano se anula}
Under the above assumptions, $[\ell_{\rho,\sigma}(P),\ell_{\rho,\sigma}(Q)]=0$.
\end{proposition}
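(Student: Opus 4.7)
The approach is to compare $(\rho,\sigma)$-weights on both sides of the Jacobian identity $[P,Q]\in K^{\times}$ (part of the $(m,n)$-pair hypothesis).

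\textbf{Step 1 (leading form of a Poisson bracket).} Write $P=\ell_{\rho,\sigma}(P)+P'$ and $Q=\ell_{\rho,\sigma}(Q)+Q'$ with $v_{\rho,\sigma}(P')<v_{\rho,\sigma}(P)$ and $v_{\rho,\sigma}(Q')<v_{\rho,\sigma}(Q)$. Bilinearity of the Poisson bracket gives
\begin{equation*}
[P,Q]=[\ell_{\rho,\sigma}(P),\ell_{\rho,\sigma}(Q)]+[\ell_{\rho,\sigma}(P),Q']+[P',\ell_{\rho,\sigma}(Q)]+[P',Q'].
\end{equation*}
Since the Poisson bracket of two $(\rho,\sigma)$-quasi-homogeneous elements of weights $v_A,v_B$ is either zero or $(\rho,\sigma)$-quasi-homogeneous of weight $v_A+v_B-\rho-\sigma$, each of the last three summands has $(\rho,\sigma)$-weight strictly less than $v_{\rho,\sigma}(P)+v_{\rho,\sigma}(Q)-\rho-\sigma$. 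Consequently, if $[\ell_{\rho,\sigma}(P),\ell_{\rho,\sigma}(Q)]\neq 0$, then it coincides with $\ell_{\rho,\sigma}([P,Q])$.

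\textbf{Step 2 (weight at the vertex).} From the endpoint hypothesis I would compute
\begin{equation*}
v_{\rho,\sigma}(P)=m(\rho a/l+\sigma b)=u_P>0,\qquad v_{\rho,\sigma}(Q)=(n/m)\,u_P>0,
\end{equation*}
so $v_{\rho,\sigma}(P)+v_{\rho,\sigma}(Q)-\rho-\sigma=(m+n)(\rho a/l+\sigma b)-\rho-\sigma$.

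\textbf{Step 3 (Jacobian contradiction).} Since $[P,Q]\in K^{\times}$, $\ell_{\rho,\sigma}([P,Q])=[P,Q]$ is a nonzero constant of $(\rho,\sigma)$-weight $0$, supported at $(0,0)$. If $[\ell_{\rho,\sigma}(P),\ell_{\rho,\sigma}(Q)]$ were nonzero, Step 1 would identify it with this constant, which in particular would force both $(m+n)(\rho a/l+\sigma b)=\rho+\sigma$ and the bracket's support to reduce to $\{(0,0)\}$. I would exclude this degenerate configuration by a Newton-polygon analysis of the derivative formula $\partial_x\ell_{\rho,\sigma}(P)\,\partial_y\ell_{\rho,\sigma}(Q)-\partial_y\ell_{\rho,\sigma}(P)\,\partial_x\ell_{\rho,\sigma}(Q)$, exploiting that $a/l>b>0$ and $u_P>0$ force the endpoints of $\ell_{\rho,\sigma}(P)$ and $\ell_{\rho,\sigma}(Q)$ off the coordinate axes and into the admissible direction range $](0,-1),(1,1)]$, preventing the supports that would produce a pure constant bracket.

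\textbf{Main obstacle.} Steps 1 and 2 are essentially routine; the serious work is Step 3, where one must rule out the formal possibility that $[\ell_{\rho,\sigma}(P),\ell_{\rho,\sigma}(Q)]$ is a nonzero constant. This is the geometric heart of the proof: the weight-matching condenses everything to a single degenerate scenario ($v_{\rho,\sigma}(P)+v_{\rho,\sigma}(Q)=\rho+\sigma$ with the bracket's support at the origin), and the hypotheses $a/l>b>0$ and $u_P>0$ enter essentially to exclude it.
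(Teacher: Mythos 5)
Your Steps 1--2 are fine and amount to re-deriving the content of~\cite{GGV1}*{Proposition~1.13}, which the paper simply cites: if $[\ell_{\rho,\sigma}(P),\ell_{\rho,\sigma}(Q)]\neq 0$ then it equals $\ell_{\rho,\sigma}([P,Q])=[P,Q]$, and comparing weights forces $v_{\rho,\sigma}(P)+v_{\rho,\sigma}(Q)=\rho+\sigma$. The genuine gap is Step 3: you never actually rule this equality out. You defer it to ``a Newton-polygon analysis of the derivative formula'' and to constraints on supports, but no such analysis is needed (once the bracket of leading forms is identified with the constant $[P,Q]$, the only thing left to contradict is the weight equality, not the shape of its support), and none is supplied; so the proof is incomplete exactly at the point you yourself flag as the heart of the argument. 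You have also misjudged the difficulty: the exclusion is a two-line numerical estimate, not a geometric one.

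Concretely, the paper argues as follows. Since $u_P=v_{\rho,\sigma}(P)>0$ and $v_{\rho,\sigma}(Q)=\tfrac nm v_{\rho,\sigma}(P)>0$, the sum of weights is positive, so the equality $v_{\rho,\sigma}(P)+v_{\rho,\sigma}(Q)=\rho+\sigma$ is impossible when $\rho+\sigma\le 0$. When $\rho+\sigma>0$, use that $b$ is a positive integer, so $a/l>b\ge 1$ (your sketch only records $b>0$, which would not suffice here), together with $\rho>0$, to get
$$
v_{\rho,\sigma}(P)+v_{\rho,\sigma}(Q)=(m+n)\Bigl(\rho\frac{a}{l}+\sigma b\Bigr)>(m+n)\,b\,(\rho+\sigma)>\rho+\sigma .
$$
This strict inequality, split on the sign of $\rho+\sigma$, is all that is needed; combined with your Step 1 it immediately yields $[\ell_{\rho,\sigma}(P),\ell_{\rho,\sigma}(Q)]=0$. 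To repair your proposal, replace the proposed support analysis in Step 3 by this case distinction and inequality.
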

\begin{proof}
By~\cite{GGV1}*{Proposition~1.13} it suffices to prove that $v_{\rho,\sigma}(P)+v_{\rho,\sigma}(Q)>\rho+\sigma$. If $\rho+\sigma\le 0$, then this is true, since $v_{\rho,\sigma}(Q)=\frac nm v_{\rho,\sigma}(P)>0$; while if $\rho+\sigma>0$, then since $\frac al >b\ge 1$ and $\rho>0$, we have
$$
v_{\rho,\sigma}(P)+v_{\rho,\sigma}(Q)=(m+n)\left(\rho \frac al+\sigma b\right)>(m+n)b(\rho+\sigma)>\rho+\sigma,
$$
as desired.
\end{proof}

\begin{proposition}\label{direcciones positivas}
Under the above assumptions, if $\rho+\sigma>0$, then $\rho|l$ and there exist $\lambda,\mu\in K^{\times}$, such that $\ell_{\rho,\sigma}(P)=\lambda x^{u_p/\rho}(z-\mu)^{mb}$,  where $z\coloneqq x^{-\sigma/\rho}y$.
\end{proposition}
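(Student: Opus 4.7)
The starting point is the bracket vanishing $[\ell_{\rho,\sigma}(P),\ell_{\rho,\sigma}(Q)] = 0$ supplied by Proposition~\ref{jacobiano se anula}. The idea is to exploit the substitution $z \coloneqq x^{-\sigma/\rho}y$, under which any $(\rho,\sigma)$-quasi-homogeneous element of weight $w$ factors as $x^{w/\rho}$ times a polynomial in $z$ alone. The divisibility $\rho\mid l$ amounts to ensuring that the exponent $u_p/\rho = ma/l + (\sigma/\rho)mb$ actually lies in $\frac{1}{l}\mathds{Z}$; this follows from the observation that the leading edge of $\frac{1}{m}P$ passes through the two rational points $(a/l,b)$ and $(c/l,d)$ of $\frac{1}{l}\mathds{Z}\times\mathds{Z}$, so its slope $-\rho/\sigma$ and the denominator $l$ are linked in the expected way.

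Once the substitution is legal, write $\ell_{\rho,\sigma}(P) = x^{u_p/\rho}\phi(z)$ and $\ell_{\rho,\sigma}(Q) = x^{u_q/\rho}\psi(z)$, with $\deg\phi = mb$ and $\deg\psi = nb$ (the top $z$-degree comes from the nonzero $y^{mb}$-coefficient at the corner $(ma/l,mb)$). A direct calculation in the $(x,z)$-variables (the $z\phi'\psi'$-terms cancel) gives
\begin{equation*}
[\ell_{\rho,\sigma}(P),\ell_{\rho,\sigma}(Q)] = x^{(u_p+u_q-\rho-\sigma)/\rho}\bigl((u_p/\rho)\,\phi\psi' - (u_q/\rho)\,\phi'\psi\bigr).
\end{equation*}
Combined with Proposition~\ref{jacobiano se anula} and the obvious $u_q/u_p = n/m$ this yields $m\phi\psi' = n\phi'\psi$, i.e.~$m(\log\psi)' = n(\log\phi)'$, and therefore $\psi^m = c\phi^n$ for some $c\in K^\times$. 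Unique factorization in $K[z]$ together with $\gcd(m,n)=1$ (inherent in the $(m,n)$-pair condition) forces every irreducible factor of $\phi$ to appear with multiplicity divisible by $m$; over algebraically closed $K$ this gives $\phi = \alpha g^m$ and $\psi = \beta g^n$ for some $g\in K[z]$ of degree $b$, with factorization $g = \gamma\prod_i(z-\mu_i)^{\nu_i}$, $\sum_i\nu_i = b$.

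The remaining and hardest step is to collapse this product to a single factor $g = \gamma(z-\mu)^b$ with $\mu\in K^\times$. I would first rule out $0$ among the roots $\mu_i$: if $g(0)=0$, then $\phi$ vanishes at $z=0$, which pushes $\frac{1}{m}\st_{\rho,\sigma}(P)$ off the line $y=0$; combined with the hypotheses $\rho>0$, $\rho+\sigma>0$, and $a/l>b>0$, and the convexity of the support of $P$ inside the rectangle $(0,0)$, $m(a,0)$, $m(a,b)$, $m(0,b)$, this can be contradicted because the leading edge must reach the bottom side. The uniqueness of the root $\mu$ is the principal obstacle, since the bracket vanishing alone only produces the $g^m$-factorization: one needs the specific geometry of the $(m,n)$-pair at the corner $(a/l,b)$ to rule out multiple distinct roots. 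I expect this to follow from an approximate-root reading in the sense of Moh and~\cite{GGV1}: interpret $g$ as the leading form of an $m$-th approximate root of $P$ attached to this corner, and then apply an auxiliary Jacobian-type argument to the resulting pair to force $g$ itself to be a $b$-th power of a single linear factor.
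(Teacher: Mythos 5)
Your reduction to the $(x,z)$-variables is fine as far as it goes: the bracket computation, the identity $m\phi\psi'=n\phi'\psi$, and the conclusion $\phi=\alpha g^m$, $\psi=\beta g^n$ with $\deg g=b$ are correct and correspond to what the paper obtains from \cite{GGV1}*{Proposition~2.1(2)} right after Proposition~\ref{direcciones positivas}. But the two assertions that constitute the actual content of the proposition are left unproved. First, your argument for $\rho\mid l$ does not work: knowing that the edge (or corner) of $P$ joins points of $\tfrac1l\mathds{Z}\times\mathds{Z}$ only tells you that the coprime normal direction satisfies $\rho\mid l(b-d)$, not $\rho\mid l$ (e.g.\ a segment of direction orthogonal to $(3,2)$ between points of $\tfrac12\mathds{Z}\times\mathds{Z}$), so the slope and the denominator are not ``linked in the expected way.'' Second, and more seriously, the collapse of $g$ to a single linear factor $(z-\mu)^b$ is exactly the point you concede (``I expect this to follow\dots''); bracket vanishing alone can never give it, since $\lambda x^{u_p/\rho}g(z)^m$ with $g$ squarefree of degree $b$ is perfectly compatible with $[\ell_{\rho,\sigma}(P),\ell_{\rho,\sigma}(Q)]=0$. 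Appealing to ``approximate roots'' here is circular in spirit: this proposition is part of the machinery being built to make that theory work, and no auxiliary Jacobian-type argument is actually supplied.

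The paper closes both gaps with one input you do not use: \cite{GGV1}*{Theorem~2.6} produces a $(\rho,\sigma)$-homogeneous $F$ with $v_{\rho,\sigma}(F)=\rho+\sigma$, $[F,\ell_{\rho,\sigma}(P)]=\ell_{\rho,\sigma}(P)$, and endpoints comparable with those of $P$ (or equal to $(1,1)$). The hypothesis $\rho+\sigma>0$ then rules out $\en_{\rho,\sigma}(F)\sim\en_{\rho,\sigma}(P)$ by an integrality contradiction ($0<\lambda b<1$ with $\lambda b\in\mathds{Z}$), forcing $\en_{\rho,\sigma}(F)=(1,1)$ and, by \cite{GGV1}*{Proposition~2.11(2)}, $\st_{\rho,\sigma}(F)=(1+\sigma/\rho,0)$; this simultaneously gives $\rho\mid l$, gives $v_{0,1}(\st_{\rho,\sigma}(P))=0$ (your $\mu\neq0$, which you tried to get from the rectangle-shaped support instead), and, via \cite{GGV1}*{Proposition~2.11(1)} applied to the relation $[F,\ell_{\rho,\sigma}(P)]=\ell_{\rho,\sigma}(P)$ with $F$ having a single linear $z$-factor, forces $\ell_{\rho,\sigma}(P)=\lambda x^{u_p/\rho}(z-\mu)^{mb}$. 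Without some substitute for this eigenvector-type relation your proof cannot be completed, so as it stands the proposal has a genuine gap.
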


\begin{proof}
By~\cite{GGV1}*{Theorem~2.6} there exists a $(\rho,\sigma)$-homogeneous element $F\in L^{(l)}$ such that
\begin{itemize}

\item[-] $v_{\rho,\sigma}(F)=\rho+\sigma$,

\smallskip

\item[-] $[F,\ell_{\rho,\sigma}(P)]= \ell_{\rho,\sigma}(P)$,

\smallskip

\item[-] $\st_{\rho,\sigma}(P)\sim\st_{\rho,\sigma}(F)$ or $\st_{\rho,\sigma}(F) = (1,1)$,

\smallskip

\item[-] $\en_{\rho,\sigma}(P)\sim\en_{\rho,\sigma}(F)$ or $\en_{\rho,\sigma}(F)= (1,1)$.

\smallskip

\end{itemize}
If $\en_{\rho,\sigma}(P)=m(a/l,b)\sim \en_{\rho,\sigma}(F)$, then there exists $\lambda>0$ such that $\en_{\rho,\sigma}(F) =\lambda (a/l,b)$. So
$$
\rho + \sigma = v_{\rho,\sigma}(F) = \rho \lambda \frac al + \lambda\sigma b > \lambda b(\rho+\sigma) \Longrightarrow 0<\lambda b < 1,
$$
which is impossible, since $\lambda b = v_{0,1}(\en_{\rho,\sigma}(F))\in \mathds{Z}$. Consequently $\en_{\rho,\sigma}(F) = (1,1)$, and hence $\st_{\rho,\sigma}(F)=(1+\sigma/\rho,0)$, by~\cite{GGV1}*{Proposition~2.11(2)}. Thus $\rho|l$ and $\st_{\rho,\sigma}(P)\sim\st_{\rho,\sigma}(F)$, which implies $v_{0,1}(\st_{\rho,\sigma}(P))=0$. Write
$$
F=x^{\frac{u}{l}}y^v f(z)\quad\text{and}\quad \ell_{\rho,\sigma}(P)={x^{\frac{c}{l}}y^d} p(z) \quad\text{with $p(0)\ne 0\ne f(0)$.}
$$
Note that $v={d}=0$, $\rho c/l = u_p$, $v_{0,1}(\en_{\rho,\sigma}(P)) = mb$ and $f(z)=\lambda_1 (z-\mu)$ for some $\lambda_1,\mu\in K^{\times}$. By~\cite{GGV1}*{Proposition~2.11(1)} we have $\ell_{\rho,\sigma}(P)=\lambda x^{u_p/\rho} (z-\mu)^{mb}$, for some $\lambda\in K^{\times}$, which concludes the proof.
\end{proof}

By \cite{GGV1}*{Proposition~2.1(2)} (which applies thanks to Proposition~\ref{jacobiano se anula}) we know that there exist $\lambda_P,\lambda_Q\!\in\! K^{\times}$ and a $(\rho,\sigma)$-ho\-mo\-ge\-neous element $R\in L^{(l)}$, such that
$$
\ell_{\rho,\sigma}(P)=\lambda_P R^m\qquad\text{and}\qquad \ell_{\rho,\sigma}(Q) = \lambda_Q R^n.
$$
Let $\lambda\in K^{\times}$ and $R_0\in L^{(l)}$ be a $(\rho,\sigma)$-homogeneous element such that $\ell_{\rho,\sigma}(P) = \lambda R_0^h$ with $h$ maximum (consequently $m\mid h$ and we can assume that $R=R_0^{h/m}$ and $\lambda_P = \lambda$). Arguing as in \cite{GGV2}*{Corollary~2.6} we obtain that there exist $i\ge 0$ and a $(\rho,\sigma)$-homogeneous element $G\in L^{(l)}$ such that $[G,R] = R^i$.

\smallskip

Let $(a/l,b),(c/l,d)\in\frac 1l\mathds{Z}\times \mathds{Z}$ {such that} $a/l>b>d\ge 0$ and $a>c>0$. {Assume also that} $b-d<a/l-c/l$ (we do not assume the existence of $P$ and $Q$ at this point). It is well known that for each $(r/l,s)\in \frac{1}{l} \mathds{Z} \times\mathds{Z}\setminus\mathds{Z}(1,1)$ there exists a unique $(\varrho,\varsigma)\in\mathfrak{V}_{>0}$, which we denote by $\dir(r/l,s)$, such that $v_{\varrho,\varsigma}(r/l,s)=0$. Set $(\rho,\sigma)\coloneqq -\dir((a/l,b)-(c/l,d))$ and note that $(0,-1)<(\rho,\sigma)<(1,-1)$. We will analyse the existence of $i\in\mathds{N}$ and $(\rho,\sigma)$-ho\-mo\-ge\-neous elements $R,G\in L^{(l)}$, such that
\begin{equation}\label{determinacion de abcd}
v_{\rho,\sigma}(R)>0,\quad [G,R]=R^i,\quad (a/l,b)=\en_{\rho,\sigma}(R)\quad\text{and}\quad (c/l,d)=\st_{\rho,\sigma}(R).
\end{equation}
Let $\ell\in \mathds{N}$ be minimal with $\ell v_{\rho,\sigma}(R) +\rho+\sigma>0$. By \cite{GGV2}*{{Proposition~3.12}}, we know that if there exist {$i\in \mathds{N}$ and $R,G\in L^{(l)}$ satisfying~\eqref{determinacion de abcd}, and such that}
\begin{equation}\label{no potencia de un lineal}
R\ne \lambda x^{\frac{u}{\rho}}h^j(z)\quad\text{for all $\lambda\in K^{\times}$, $j\in\mathds{N}$, $z\coloneqq x^{-\frac{\sigma}{\rho}}y$ and all linear polynomial $h$},
\end{equation}
then {there exists $\vartheta,t'\in \mathds{N}$ such that}
\begin{equation}\label{condicion}
{\vartheta\le N_1,\qquad t'<\ell\vartheta \qquad\text{and}\qquad  (\rho,\sigma) = -\dir \Bigl(t'\Bigl(\frac{c}{l},d\Bigr) + \vartheta(1,1)\Bigr),}
\end{equation}
{where $N_1\coloneqq \gcd(a-c,b-d)$, or}
\begin{equation}\label{condicion'}
{d>0,\qquad \vartheta\mid N_2,\qquad t'<\ell\vartheta \qquad\text{and}\qquad (\rho,\sigma) = -\dir \Bigl(t'\Bigl(\frac{c}{l},d\Bigr) + \vartheta(1,1)\Bigr),}
\end{equation}
where $N_2\coloneqq \gcd(c,d)$. By~\cite{GGV2}*{Remark~3.13}
$$
\frac{\vartheta}{t'} = -\frac{\rho a/l+\sigma b}{\rho+\sigma}.
$$
Hence
$$
s\coloneqq \frac{\rho a+\sigma bl}{\gcd(\rho l+\sigma l,\rho a+\sigma bl )}\biggl| \vartheta,
$$
and so we can take (and we do it) $\vartheta=s$ in~\eqref{condicion} and~\eqref{condicion'}.

\smallskip

We suspect that the existence of $\vartheta$ and $t'$ satisfying the conditions in~\eqref{condicion} or in~\eqref{condicion'} is sufficient for the existence of $i\in \mathds{N}$ and two $(\rho,\sigma)$-homogeneous elements $R,G\in L^{(l)}$, such that the conditions in~\eqref{determinacion de abcd} and~\eqref{no potencia de un lineal} are satisfied (with $(c/l,d)\coloneqq \st_{\rho,\sigma}(R)$), but at the moment we have no proof.

\begin{remark}\label{s igual a b}
Since $N_2<b$, if $s=b$, then necessarily $b\le N_1$. So, by~\cite{GGV2}*{Proposition~3.12(2)} there exists a linear factor with multiplicity $b$, which contradicts~\eqref{no potencia de un lineal}. Consequently $s<b$.
\end{remark}

\begin{remark}
By~\cite{GGV2}*{Theorem~3.4} in~\eqref{determinacion de abcd} we can assume that $i$ is minimum such that
$$
v_{\rho,\sigma}(R)(i-1) + \rho + \sigma \ge 0,
$$
or, equivalently, that $i= \left\lceil 1- \frac{\rho+\sigma}{v_{\rho,\sigma}(R)} \right\rceil$.
\end{remark}

In the case $b=2$ we can establish necessary and sufficient conditions on $a$, $l$ for the existence of $c\in \mathds{N}$, $d\in\{0,1\}$ and two $(\rho,\sigma)$-homogeneous elements $R,G\in L^{(l)}$ satisfying the conditions of~\eqref{determinacion de abcd}, if we assume that $R$ satisfies~\eqref{no potencia de un lineal}. This additional condition corresponds to the existence of split roots (see Definition~\ref{pi-root}). Before we establish the result we note that
$$
(0,-1)<(\rho,\sigma)<(1,-1)\qquad\text{and}\qquad (\rho,\sigma) = -\dir \Bigl(\frac{a-c}{l},b-d\Bigr) \sim (lb-ld,c-a)
$$
implies $c<a$ and $b-d<a/l-c/l$.

\begin{proposition}\label{b igual a 2} Let $a,l\in \mathds{N}$ be such that $a/l>2$ and set $b\coloneqq 2$. Let $(\rho,\sigma)\in ](0,-1),(1,-1)[$ be a direction, and let
$$
\vartheta\coloneqq \frac{\rho a+\sigma bl}{\gcd(\rho l+\sigma l,\rho a+\sigma bl)}.
$$
The following assertions are equivalent:

\begin{enumerate}

\smallskip

\item {There exist $c\in\mathds{N}$, $d\in\{0,1\}$ and two $(\rho,\sigma)$-homogeneous elements $R,G\in L^{(l)}$ satisfying the conditions in~\eqref{determinacion de abcd} and~\eqref{no potencia de un lineal}.}

\smallskip

\item {There exist $c\in\mathds{N}$ and two $(\rho,\sigma)$-homogeneous elements $R,G\in L^{(l)}$ satisfying the conditions in~\eqref{determinacion de abcd} and~\eqref{no potencia de un lineal} with $d=1$.}

\smallskip

\item {$\vartheta = 1$, $v_{\rho,\sigma}(a/l,2)>0$ and there exist $c\in\mathds{N}$ such that
\begin{equation}\label{rho sigma}
(\rho,\sigma) = -\dir \Bigl(\frac{a-c}{l},1\Bigr) = -\dir \Bigl(t'\Bigl(\frac{c}{l},1\Bigr) + (1,1)\Bigr),
\end{equation}
for some $0<t'<\ell$, where $\ell\in \mathds{N}$ is minimal with $\ell v_{\rho,\sigma}(a/l,2)+\rho+\sigma>0$.}

\smallskip

\item {There exists $\Delta\in\mathds{N}$ with $l<\Delta<a/2$ such that $a-2\Delta \mid \Delta-l$.}

\end{enumerate}
{Moreover, $(\rho,\sigma)\sim (l,-\Delta)$.}

\end{proposition}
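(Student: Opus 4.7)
My plan is to prove the cyclic chain $(2)\Rightarrow(1)\Rightarrow(3)\Rightarrow(4)\Rightarrow(2)$, extracting the moreover statement along the way; throughout, I read~(4) together with the moreover as: there exists $\Delta\in\mathds{N}$ with $(\rho,\sigma)\sim(l,-\Delta)$, $l<\Delta<a/2$, and $a-2\Delta\mid\Delta-l$. The implication $(2)\Rightarrow(1)$ is immediate since $1\in\{0,1\}$.

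For $(1)\Rightarrow(3)$, I invoke the dichotomy~\eqref{condicion}/\eqref{condicion'} from the discussion preceding the proposition. Since $b=2$, Remark~\ref{s igual a b} forces $s<2$, hence $s=1$; because we may take $\vartheta=s$, this gives $\vartheta=1$. The inequality $v_{\rho,\sigma}(a/l,2)>0$ is immediate from $v_{\rho,\sigma}(R)>0$ and $\en_{\rho,\sigma}(R)=(a/l,2)$. If the $d$ provided by~(1) equals~$1$, the two equalities in~\eqref{rho sigma} are respectively the definition of $(\rho,\sigma)$ and a direct reading of~\eqref{condicion'}. If $d=0$, equating $-\dir((a-c)/l,2)$ from the definition with $-\dir((t'c+l)/l,1)$ coming from~\eqref{condicion} yields $a-c=2(t'c+l)$; setting $c':=(t'+1)c+l\in\mathds{N}$ one checks directly that $(\rho,\sigma)=-\dir((a-c')/l,1)=-\dir(t'(c'/l,1)+(1,1))$, which provides the needed witness with the same $t'$.

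For $(3)\Rightarrow(4)$, set $\Delta:=a-c$. The two $-\dir$ expressions in~\eqref{rho sigma} give $(\rho,\sigma)\sim(l,-\Delta)$ and $(\rho,\sigma)\sim(l(t'+1),-(t'c+l))$; equating yields $\Delta(t'+1)=t'c+l$, which rewrites as $t'(a-2\Delta)=\Delta-l$. Since $t'\in\mathds{N}_{>0}$, both $a-2\Delta\mid\Delta-l$ and the agreement of signs of $a-2\Delta$ and $\Delta-l$ follow; combined with $v_{\rho,\sigma}(a/l,2)>0$ (which forces $a-2\Delta>0$) this yields $l<\Delta<a/2$ and establishes the moreover.

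For the constructive step $(4)\Rightarrow(2)$, set $c:=a-\Delta$, $t':=(\Delta-l)/(a-2\Delta)\in\mathds{N}_{>0}$, and define
$$
R:=x^{(a-\Delta)/l}\,y+x^{a/l}\,y^2=x^{(a-2\Delta)/l}\,z(z+1),\qquad z:=x^{\Delta/l}y.
$$
Since $(\rho,\sigma)\sim(l,-\Delta)$, $R$ is $(\rho,\sigma)$-homogeneous in $L^{(l)}$ with $\en_{\rho,\sigma}(R)=(a/l,2)$, $\st_{\rho,\sigma}(R)=(c/l,1)$ and $v_{\rho,\sigma}(R)=a-2\Delta>0$; the factor $z(z+1)$ has two distinct linear factors in $z$, so~\eqref{no potencia de un lineal} holds. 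By the preceding Remark the required exponent is $i=1+t'$, and the ansatz $G=g(z)$ reduces $[G,R]=R^{1+t'}$, via a direct bracket computation in the coordinates $(x,z)$, to the ODE
$$
g'(z)\cdot\frac{2\Delta-a}{l}\cdot z(z+1)=z^{1+t'}(z+1)^{1+t'},
$$
solvable by quadrature $g(z)=-\frac{l}{a-2\Delta}\int z^{t'}(z+1)^{t'}\,dz\in K[z]\subset L^{(l)}$. The main obstacle will be the careful bookkeeping of the $d=0$ subcase in $(1)\Rightarrow(3)$ (verifying that the parity of $a-c$ is forced by~\eqref{condicion} with $\vartheta=1$) together with the verification that the quadrature above yields a genuine $(\rho,\sigma)$-homogeneous $G\in L^{(l)}$ satisfying all of~\eqref{determinacion de abcd}.
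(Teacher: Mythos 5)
Your proof is correct, and its constructive half coincides with the paper's: the implications $(3)\Rightarrow(4)$ and $(4)\Rightarrow(2)$ are exactly the paper's argument (same $\Delta=a-c$, the relation $t'(a-2\Delta)=\Delta-l$ obtained by pairing $(l,-\Delta)$ with the vector in~\eqref{rho sigma}, and the same witnesses $R=x^{(a-2\Delta)/l}z(z+1)$, $G$ obtained by quadrature from $g'(z)=\frac{l}{2\Delta-a}z^{t'}(z+1)^{t'}$, whose bracket computation you verify correctly). Where you genuinely diverge is the treatment of the case $d=0$: the paper proves $1)\Rightarrow 2)$ by first showing $2\mid a-c$ via the gcd bound $\vartheta\le N_1=\gcd(a-c,2)$ and then applying the translation $\varphi(y)=y+\alpha_1x^{-\Delta/l}$ (using the factorization of $R$ forced by~\eqref{no potencia de un lineal} and \cite{GGV1}*{Proposition~3.10} to preserve $[G,R]=R^i$), thereby producing new data with $d=1$ and only then passing to $(3)$; you instead go straight from $(1)$ to $(3)$, getting $\vartheta=s=1$ from Remark~\ref{s igual a b}, extracting $a-c=2(t'c+l)$ from the proportionality of the two vectors, and replacing the witness $c$ by $c'=(t'+1)c+l$ --- a purely arithmetic substitution (your identity $t'c'+l=(t'+1)(t'c+l)$ checks out) that avoids constructing any new $R$, at the cost of not exhibiting $d=1$ data until it is recovered at the end of your cycle through $(4)\Rightarrow(2)$. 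Both routes are sound; the paper's gives the explicit reduction of $d=0$ to $d=1$, yours is more elementary at that step. Two cosmetic points: $v_{\rho,\sigma}(R)$ equals $\frac{\rho}{l}(a-2\Delta)$ rather than $a-2\Delta$ (only positivity matters), and your reading of $(4)$ together with the ``moreover'' as including $(\rho,\sigma)\sim(l,-\Delta)$ is the right way to make the statement precise, matching what the paper's proof of $4)\Rightarrow 2)$ implicitly does by defining $(\rho,\sigma)$ from $\Delta$.
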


\begin{proof} 1) $\Rightarrow$ 2)\enspace {Suppose $d=0$ and write
$$
R=\lambda x^{\frac{u}{l}}(z-\alpha_1)(z-\alpha_2)\quad\text{with $z\coloneqq x^{-\frac{\sigma}{\rho}}y$.}
$$
Note that by~\eqref{no potencia de un lineal} we have $\alpha_1\ne \alpha_2$. Note also that $\rho u/l = 2\sigma + \rho a/l$, and hence $u = (2l\sigma+\rho a)/\rho$. Moreover since $b-d = 2$,
$$
(2l,c-a)\cdot \left(\frac{a}{l}-\frac{c}{l},b-d\right)= 2(a-c) - (c-a) (b-d) = 0,
$$
and consequently $(\rho,\sigma)\sim (2l,c-a)$. Finally, since $d=0$ necessarily~\eqref{condicion} is satisfied. We claim that $2|a-c $. In fact,
$$
0 = (2l,c-a)\cdot \Bigl(t'\Bigl(\frac{c}{l},0\Bigr) + \vartheta(1,1)\Bigr) = 2ct' + (c-a)\vartheta,
$$
which implies $2|a-c$, because otherwise $2\mid \vartheta \le N_1 = \gcd(a-c,2)= 1$. Set $\Delta\coloneqq (a-c)/2$ and consider the automorphism $\varphi$ of $L^{(l)}$ defined by $\varphi(x^{1/l})\coloneqq x^{1/l}$ and $\varphi(y)\coloneqq y+\alpha_1 x^{-\Delta/l}$. Using that $(\rho,\sigma)\sim (l,-\Delta)$ it is easy to check that
$$
\varphi(R)=\lambda x^{\frac{u}{l}}z(z-(\alpha_2-\alpha_1)).
$$
By~\cite{GGV1}*{Proposition~3.10}, we know that $[\varphi(G),\varphi(R)]=\varphi(R)^i$ and an easy computation shows that $\en_{\rho,\sigma}(\varphi(R)) =(a/l,b)$ and $\st_{\rho,\sigma}(\varphi(R)) =((a-\Delta)/l,1)$. So, replacing $R$ by $\varphi(R)$ yields $d=1$.}

\smallskip

\noindent 2) $\Rightarrow$ 1)\enspace {This is trivial.}

\smallskip

\noindent 2) $\Rightarrow$ 3)\enspace {Since $d=1$, we have $N_1=N_2=1$. Hence, $\vartheta = 1$ and equality~\eqref{rho sigma} is satisfied for some $0<t'<\ell$. Moreover it is clear that}
$$
{v_{\rho,\sigma}\Bigl(\frac{a}{l},2\Bigr)=v_{\rho,\sigma}(R)>0 \qquad\text{and}\qquad  (\rho,\sigma) = -\dir \bigl(\en_{\rho,\sigma}(R)-\en_{\rho,\sigma}(R)\bigr) = -\dir \Bigl(\frac{a-c}{l},1\Bigr).}
$$
\smallskip

\noindent 3) $\Rightarrow$ 4)\enspace {Since
$$
(l,c-a)\cdot \left(\frac{a}{l}-\frac{c}{l},1\right) = 0,
$$
we have $(\rho,\sigma)\sim (l,-\Delta)$, where $\Delta\coloneqq a-c$. Thus, by~\eqref{rho sigma},
$$
0 = (l,-\Delta)\cdot \Bigl(t'\Bigl(\frac{a-\Delta}{l},1\Bigr) + (1,1)\Bigr) = t'a-2t'\Delta + l -\Delta,
$$
which implies that $a-2\Delta|l-\Delta$, as desired. Since $(\rho,\sigma)\sim (l,-\Delta)$ and $v_{\rho,\sigma}(a/l,2)>0$, we have
$$
a-2\Delta= (l,-\Delta)\cdot \left(\frac al,2\right) = \frac{l}{\rho}(\rho,\sigma)\cdot \left(\frac al,2\right) > 0,
$$
and so $\Delta<a/2$. Finally, the computation $l-\Delta=\frac {l}{\rho}(\rho+\sigma)<0$ shows that $\Delta>l$.}

\smallskip

\noindent 4) $\Rightarrow$ 2)\enspace {Set $c\coloneqq a-\Delta$, $z\coloneqq x^{\Delta/l}y$ and $(\rho,\sigma)\coloneqq -\dir((a/l,2)-(c/l,1))$. Since $0<l<\Delta$, the inequalities $(0,-1)<(\rho,\sigma)<(1,-1)$ hold. Let $k_1\in \mathds{N}$ be such that $k_1(a-2\Delta)=\Delta-l$ and let $g(z)$ be a polynomial such that $g'(z)=z^{k_1}(1+z)^{k_1}$. A straightforward computation shows that
$$
R\coloneqq x^{\frac{a-2\Delta}{l}}z(1+z)=x^{\frac{c}{l}}y(1+z)\qquad\text{and}\qquad G\coloneqq \frac{l}{2\Delta - a}g(z),
$$
satisfy
$$
\left(\frac al,2\right)=\en_{\rho,\sigma}(R),\quad \left(\frac cl,1\right)=\st_{\rho,\sigma}(R),\quad v_{\rho,\sigma}(R)>0\quad\text{and}\quad
[G,R]=R^{k_1+1},
$$
as we want.}
\end{proof}

\section{Two formulas for the Intersection number}

Recall that the intersection number of two bivariate polynomials {$P$ and $Q$} is defined by $I(P,Q)\coloneqq
\deg_x({\Res_y}(P,Q))$, where ${\Res_y(P,Q)}$ denotes the resultant of $P$ {and} $Q$ as polynomials in
$y$. In~\cite{X}, the author defines for a Jacobian pair $(P,Q)$ the polynomial $P_\xi\coloneqq P(x,y)-\xi${,} where $\xi$ is a generic
element of the field $K$, and gives two different formulas for $I(P_\xi,Q)$, one in terms of the major roots in~\cite{X}*{Theorem~5.1} and the other
in terms of the minor roots in~\cite{X}*{Theorem~4.7}. We will prove the first formula using our language in Theorem~\ref{Interseccion con raices mayores}, and 
instead of the equality in the formula for $I_m$ we will prove an inequality in Theorem~\ref{numero de interseccion con raices menores}. In order to do this, it will be convenient to provide a proof of the
preparatory results of~\cite{X} in the language of~\cite{GGV1}.

We will first define approximate roots, final major roots and final minor roots using our language.

\subsection[Approximate $\pi$-roots]{Approximate $\boldsymbol{\pi}$-roots}
In this section we will consider a polynomial $P\in L$, which is monic in $y$.
For $l\in\mathds{N}$ we will consider the following algebras:
$$
L=K[x,y]\subsetneq K[x^{\pm \frac{1}{l}},y]\subsetneq K((x^{-1/l}))[y]\subsetneq K[\pi]((x^{-1/l}))[y],
$$
where $\pi$ is a variable (``symbol'' in~\cite{X}). We also will use the subring $L_\pi^{(l)}\coloneqq K[\pi][x^{\pm 1/l},y]$ of
$K[\pi]((x^{-1/l}))[y]$. Note that $\deg_x=v_{1,0}$ is well defined in $K[\pi]((x^{-1/l}))[y]$.

Unless otherwise indicated, we will consider the
elements $P$ of the above mentioned algebras as polynomials in $y$ with coefficients in one of the algebras $K[x]$, $K[x^{\pm \frac{1}{l}},y]$,
$K[\pi]((x^{-1/l}))$,\dots. Consequently expressions like $P(\tau)$, $P(\alpha)$,\dots, will denote $P$ with~$y$ replaced by $\tau$,
$P$ with~$y$ replaced by $\alpha$, etc.

By the Newton-Puiseux Theorem
(see~\cite{E}*{Corollary~13.15, page~295}) there exist $l\in\mathds{N}$ and $\alpha_i,\beta_i\in K((x^{-1/l}))$ such that
$$
P=\prod_{i=1}^{M}(y-\alpha_i).
$$
We set $\mathcal{R}(P)=\{\alpha_i: i=1,\dots,M\}$.
\begin{definition}\label{approximation up to}
Let $\alpha\in\mathcal{R}(P)$ and write $\alpha=\sum_j a_j x^j$ with $j\in\frac 1l\mathds{Z}$. The \emph{$\pi$-approximation of $\alpha$ up to $x^{j_0}$} is the element
$$
\tau\coloneqq\sum_{j>j_0}a_j x^j+\pi x^{j_0}\in K[\pi,x^{\pm\frac 1l}].
$$
Note that $\deg_x(\tau-\alpha)=j_0$.
\end{definition}

\begin{definition}\label{set of approximated roots}
Let $\tau\coloneqq\sum_{j>j_0}a_j x^j+\pi x^{j_0}\in K[\pi,x^{\pm\frac 1l}]$. We set
$$
D_{\tau}^{P}\coloneqq \{\alpha\in\mathcal{R}(P) : \tau\text{ is the $\pi$-approximation of $\alpha$ up to $x^{j_0}$}\}.
$$
If $\alpha\in D_\tau^P$ the we say that \emph{$\tau$ approximates $\alpha$ up to $x^{j_0}$}.
\end{definition}

\noindent Note that the element $\alpha_i=\sum_j b_jx^j\in \mathcal{R}(P)$ belongs to $D_{\tau}^{P}$ if and only if $\deg_x(\hat\alpha_i)\le j_0$, where
$\hat\alpha_i\coloneqq\alpha_i-\sum_{j>j_0}a_j x^j$, i.e. if and only if $a_j=b_j$ for all $j>j_0$.

\begin{definition}\label{pi-root}
We say that $\tau\coloneqq  \sum_{j>j_0}a_j x^j+\pi x^{j_0}\in K[\pi,x^{\pm\frac 1l}]$ is \emph{a $\pi$-root of $P$} if there exists $\alpha\in\mathcal{R}(P)$
such that $\pi$ approximates $\alpha$ up to $x^{j_0}$. We say that $j_0$ is \emph{the order of $\tau$}.
\end{definition}

\begin{notation}
Let $\tau\coloneqq \sum_{j>j_0}a_j x^j+\pi x^{j_0}$ be a $\pi$-root of $P$. We denote by $\varphi_\tau$ the automorphism
of $L^{(l)}$ given by $\varphi_\tau(x^{1/l})\coloneqq x^{1/l}$ and
$\varphi_\tau(y)\coloneqq y+\sum_{j>j_0}a_j x^j$.
\end{notation}

\begin{remark}\label{numero de raices aproximadas no crece}
Let $\alpha\in\mathcal{R}(P)$. Assume that $\tau$ approximates $\alpha$ up to $j_0$ and $\tau_1$ approximates $\alpha$ up to $j_1$. If $j_0>j_1$, then
$D_{\tau_1}^P\subseteq D_\tau^P$.
\end{remark}

In the sequel, for each $j\in\frac 1l\mathds{Z}$, we let $\dir(j)$ denote the unique direction $(\rho,\sigma)$ such that $\rho>0$ and
$j=\frac{\sigma}{\rho}$. Moreover, given a polynomial $\tau=\sum_{i>j_0} a_i x^i+\pi x^{j_0}$, we set $z\coloneqq x^{-\sigma/\rho}y$, where
$(\rho,\sigma)=\dir(j_0)$.

\smallskip

The following proposition shows that our definition of $\pi$-root coincides with the one given in~\cite{M}*{Definition~1.3}, with $x^{-1}$ replaced by $t$.

\begin{proposition}\label{proposition definition pi root}
Let $\tau=\sum_{j>j_0}a_j x^j+\pi x^{j_0}$ and let $f_{P,\tau}(\pi)\in K[\pi]$ be the polynomial determined by the equality
\begin{equation}\label{eq definition pi root}
P(\tau)=f_{P,\tau}(\pi)x^{\lambda_\tau}+\text{terms with lower order in $x$},
\end{equation}
where $\lambda_\tau\coloneqq \deg_x(P(\tau))\in\frac 1l \mathds{Z}$. Set $\varphi\coloneqq \varphi_\tau$ and $(\rho,\sigma)=\dir(j_0)$. We have
\begin{equation}\label{grado de f es numero de elementos}
|D_{\tau}^{P}|=\deg(f_{P,\tau})=v_{0,1}(\en_{\rho,\sigma}(\varphi(P))),
\end{equation}
and
\begin{equation}\label{igualdad para f de P y tau}
\ell_{\rho,\sigma}(\varphi(P))=x^{\lambda_\tau}f_{P,\tau}(z).
\end{equation}
Consequently $\tau$ is a $\pi$-root of $P$ if and only if $\deg(f_{P,\tau})>0$.
\end{proposition}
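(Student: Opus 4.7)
The plan is to factor $P=\prod_{i=1}^M(y-\alpha_i)$ over $K((x^{-1/l}))$ by Newton--Puiseux, and to run the calculations of $f_{P,\tau}(\pi)$ and of $\ell_{\rho,\sigma}(\varphi(P))$ in parallel factor by factor. Setting $\hat\alpha_i\coloneqq\alpha_i-\sum_{j>j_0}a_jx^j$, one has the twin identities $\tau-\alpha_i=\pi x^{j_0}-\hat\alpha_i$ and $\varphi_\tau(y-\alpha_i)=y-\hat\alpha_i$, so that both $P(\tau)$ and $\varphi(P)$ are products of very similar binomials indexed by the same set of roots.

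The roots split naturally into $D_\tau^P$ and its complement. If $\alpha_i\in D_\tau^P$ then $\deg_x(\hat\alpha_i)\le j_0$, so the top $x$-coefficient of $\tau-\alpha_i$ is the linear polynomial $\pi-b_{j_0,i}$, where $b_{j_0,i}$ is the coefficient of $x^{j_0}$ in $\alpha_i$ (possibly zero); correspondingly, since $v_{\rho,\sigma}(y)=\sigma=\rho j_0$, both $y$ and $b_{j_0,i}x^{j_0}$ lie on the top Newton line of $y-\hat\alpha_i$, giving $\ell_{\rho,\sigma}(y-\hat\alpha_i)=x^{j_0}(z-b_{j_0,i})$. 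If instead $\alpha_i\notin D_\tau^P$, choose $k_i>j_0$ to be the largest $j$ with $a_j\ne b_{j,i}$; then $(b_{k_i,i}-a_{k_i})x^{k_i}$ is the top term of $\hat\alpha_i$, so the top $x$-coefficient of $\tau-\alpha_i$ is the $\pi$-free scalar $a_{k_i}-b_{k_i,i}$ and $\ell_{\rho,\sigma}(y-\hat\alpha_i)=(a_{k_i}-b_{k_i,i})x^{k_i}$.

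Multiplying these factorwise descriptions over $i$---using that $v_{\rho,\sigma}$ is a valuation so leading parts are multiplicative on products---gives simultaneously
$$
f_{P,\tau}(\pi)=\prod_{\alpha_i\in D_\tau^P}(\pi-b_{j_0,i})\cdot\prod_{\alpha_i\notin D_\tau^P}(a_{k_i}-b_{k_i,i})
$$
and $\ell_{\rho,\sigma}(\varphi(P))=x^{\lambda_\tau}f_{P,\tau}(z)$ with $\lambda_\tau=j_0|D_\tau^P|+\sum_{\alpha_i\notin D_\tau^P}k_i$, which is \eqref{igualdad para f de P y tau}. Reading the $y$-degree (contributed only by the $|D_\tau^P|$ factors $z-b_{j_0,i}$) yields $v_{0,1}(\en_{\rho,\sigma}(\varphi(P)))=|D_\tau^P|=\deg(f_{P,\tau})$, proving \eqref{grado de f es numero de elementos}; the final ``consequently'' is then immediate since $\deg(f_{P,\tau})>0\Leftrightarrow D_\tau^P\ne\emptyset\Leftrightarrow\tau$ is a $\pi$-root of $P$. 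The only delicate point is confirming that the two natural expressions for $\lambda_\tau$---as $\deg_x(P(\tau))$ and as the $(\rho,\sigma)$-weight of the leading monomial of $\varphi(P)$---coincide, which follows at once from $v_{\rho,\sigma}(x^k)=\rho k$ and the parallel factorwise formulas above.
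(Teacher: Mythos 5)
Your argument is correct, and it overlaps with the paper's proof in its second half while differing in the first. The paper obtains the key identity $\ell_{\rho,\sigma}(\varphi(P))=x^{\lambda_\tau}f_{P,\tau}(z)$ abstractly: it observes that $P(\tau)=\ev_{\pi x^{j_0}}(\varphi(P))$ and that the evaluation $\ev_{\pi x^{j_0}}$ is $(\rho,\sigma)$-homogeneous, so taking leading forms commutes with the evaluation and one reads off $f_{P,\tau}=g$ and $\lambda_\tau=r/l$ from $\ell_{\rho,\sigma}(\varphi(P))=x^{r/l}g(z)$; the Newton--Puiseux factorization is only invoked afterwards, to count $v_{0,1}(\en_{\rho,\sigma}(\varphi(P)))=\#\{\alpha_i:\deg_x(\hat\alpha_i)\le j_0\}=|D_\tau^P|$ exactly as you do via the dichotomy $\deg_x(\hat\alpha_i)\le j_0$ versus $>j_0$. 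You instead run the factorization through the whole statement, computing both the top $x$-coefficient of $P(\tau)=\prod(\tau-\alpha_i)$ and $\ell_{\rho,\sigma}(\varphi(P))=\prod\ell_{\rho,\sigma}(y-\hat\alpha_i)$ factor by factor and matching them term by term. What your route buys is an explicit product formula for $f_{P,\tau}$ (which makes statements like Corollary~\ref{lambda tiene que ser raiz} and Remark~\ref{multiplicidad es cota para numero de raices mejor aproximadas} completely transparent) and the equality of the two descriptions of $\lambda_\tau$ by inspection; what it costs is that you must justify multiplicativity of leading $x$-coefficients and of $(\rho,\sigma)$-leading forms over the Puiseux series coefficients (harmless, since $\rho>0$ makes these leading parts finite and the coefficient rings are domains, but it does need saying), whereas the paper's homogeneity argument sidesteps all bookkeeping with the tails of the $\hat\alpha_i$ except in the final counting step. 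Your treatment of the two cases, including the possibility $b_{j_0,i}=0$ (so $\ell_{\rho,\sigma}(y-\hat\alpha_i)=y=x^{j_0}z$ still fits the formula), is sound, and the concluding equivalence $\deg(f_{P,\tau})>0\Leftrightarrow D_\tau^P\ne\emptyset$ is exactly the paper's.
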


\begin{proof}
Let $\ev_{\pi x^{j_0}}\colon L_\pi^{(l)}\to L_\pi^{(l)}$ be the evaluation of $y$ in $\pi x^{j_0}$. So, $\ev_{\pi x^{j_0}}(y)\coloneqq \pi x^{j_0}=\pi
x^{\sigma/\rho}$ and $\ev_{\pi x^{j_0}}(x^{1/l})\coloneqq x^{1/l}$. Note that $P(\tau)=\ev_{\pi x^{j_0}}(\varphi(P))$. Since $\ev_{\pi x^{j_0}}$ is
$(\rho,\sigma)$-homogeneous,
$$
\ell_{\rho,\sigma}(\ev_{\pi x^{j_0}}(\varphi(P)))=\ev_{\pi x^{j_0}}(\ell_{\rho,\sigma}(\varphi(P))).
$$
On the other hand, since $\rho|l$,
\begin{equation}\label{ell de P}
\ell_{\rho,\sigma}(\varphi(P))=x^{r/l}g(z)\quad\text{for some $r\in\mathds{Z}$
 and $g(z)\in K[z]$.}
\end{equation}
Using that $\ev_{\pi x^{j_0}}(z)=\pi$, from this we obtain
$$
\ell_{\rho,\sigma}(\ev_{\pi x^{j_0}}(\varphi(P)))=x^{r/l}g(\pi).
$$
Note that
$$
P(\tau)=\ev_{\pi x^{j_0}}(\varphi(P))=x^{r/l}g(\pi)+\text{terms with lower order in x},
$$
because $v_{\rho,\sigma}(x^j)=j\rho< \rho r/l=v_{\rho,\sigma}(x^{r/l})$ if and only if $j<r/l$. So $f_{P,\tau}(\pi)= g(\pi)$,
$\lambda_\tau= r/l$, and equality~\eqref{ell de P} becomes equality~\eqref{igualdad para f de P y tau}. Since
$\deg_z(\ell_{\rho,\sigma}(\varphi(P)))=\deg_y(\ell_{\rho,\sigma}(\varphi(P)))$, we also have
$\deg(f_{P,\tau})=v_{0,1}(\en_{\rho,\sigma}(\varphi(P)))$. Consequently, in order to conclude the proof, it suffices to prove that
$|D_\tau^P|=v_{0,1}(\en_{\rho,\sigma}(\varphi(P)))$. Note that
$$
v_{0,1}(\en_{\rho,\sigma}(\varphi(P)))=\sum_{i=1}^M v_{0,1}(\en_{\rho,\sigma}(\varphi(y-\alpha_i)))=\sum_{i=1}^M v_{0,1}(\en_{\rho,\sigma}(y-\hat\alpha_i))),
$$
where $\hat\alpha_i=\alpha_i-\sum_{j>j_0}a_j x^j$. But
$$
\en_{\rho,\sigma}(y-\hat\alpha_i))=\begin{cases} (0,1)&\text{ if }\deg_x(\hat\alpha_i)\le \sigma/\rho=j_0,\\
(\deg_x(\hat\alpha_i),0)&\text{ if }\deg_x(\hat\alpha_i)> \sigma/\rho=j_0.
\end{cases}
$$
So
$$
\sum_{i=1}^M v_{0,1}(\en_{\rho,\sigma}(y-\hat\alpha_i)))=\#\{\alpha_i\in\mathcal{R}(P): \deg_x(\hat\alpha_i)\le j_0 \}=|D_\tau^P|,
$$
as desired.
\end{proof}

\begin{definition}\label{pi-root}
We say that a $\pi$-root $\tau$  of $P$ is a \emph{a final $\pi$-root of $P$} if $f_{P,\tau}(\pi)$ has no multiple roots and
$\deg_{\pi}(f_{P,\tau}(\pi)) > 1$, where $f_{P,\tau}(\pi)$ is defined by equality~\eqref{eq definition pi root}.
\end{definition}

\begin{remark} \label{rho sigma esta en dir de P}
Let $\tau$ be a final $\pi$-root of $P$. Since the support of $f_{P,\tau}$ has more than one point, from equality~\eqref{igualdad para f de P y tau}
it follows that $(\rho,\sigma)\in \Dir(\varphi_\tau(P))$.
\end{remark}

\begin{proposition}\label{factor genera pi raiz}
Let $\tau\coloneqq \sum_{j=1}^k a_jx^j+\pi x^{j_0}$ be a $\pi$-root of $P$ and let $\lambda\in K$. Consider the automorphism $\varphi_1\colon
L^{(l)}\to L^{(l)}$ given by $\varphi_1(x^{1/l})\coloneqq x^{1/l}$ and $\varphi_1(y)\coloneqq y+ \sum_{j=1}^k a_jx^j+\lambda x^{j_0}$. Assume
that $\varphi_1(P)$ is not a monomial and set $(\rho',\sigma')\coloneqq
\Pred_{\varphi_1(P)}(\rho,\sigma)$ (see~\cite{GGV1}*{Definition~3.4}), where $(\rho,\sigma)\coloneqq\dir(j_0)$. If $\rho'>0$, then set
$j_1=\frac{\sigma'}{\rho'}$, else take any $j_1\in\frac 1l \mathds{Z}$ with $j_1<j_0$. In both cases set $(\rho_1,\sigma_1)\coloneqq \dir(j_1)$.  If
$\pi-\lambda$ has multiplicity $r>0$ in $f_{P,\tau}(\pi)$, then
$$
\tau_1\coloneqq \sum_{j=1}^k a_jx^j+\lambda x^{j_0}+\pi x^{j_1}
$$
is a $\pi$-root of $P$ and $|D_{\tau_1}^P|=r$ (note that $j_1<j_0$). Moreover,
\begin{equation}
\label{direccion en el intervalo}
(\rho_1,\sigma_1)\in\! [\Pred_{\varphi_1(P)}(\rho,\sigma), (\rho,\sigma)[\hspace{0.5pt}.
\end{equation}
\end{proposition}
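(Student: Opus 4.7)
The plan is to reduce the assertions to Proposition~\ref{proposition definition pi root} applied at the direction $(\rho_1,\sigma_1)=\dir(j_1)$. The key opening observation is that, directly from the definitions, $\varphi_{\tau_1}=\varphi_1$ (both send $y\mapsto y+\sum_{j=1}^k a_jx^j+\lambda x^{j_0}$). Hence~\eqref{grado de f es numero de elementos} reduces everything, both $|D_{\tau_1}^P|=r$ and the statement that $\tau_1$ is a $\pi$-root of $P$ (which only requires $|D_{\tau_1}^P|>0$), to showing that $v_{0,1}(\en_{\rho_1,\sigma_1}(\varphi_1(P)))=r$.

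First, I would compute $\ell_{\rho,\sigma}(\varphi_1(P))$ at the ``old'' direction $(\rho,\sigma)=\dir(j_0)$. Since $\ev_{\pi x^{j_0}}(\varphi_1(y))=\sum_{j=1}^k a_jx^j+(\pi+\lambda)x^{j_0}$, the evaluation argument used in the proof of Proposition~\ref{proposition definition pi root} gives
$$
\ev_{\pi x^{j_0}}(\varphi_1(P))=f_{P,\tau}(\pi+\lambda)\,x^{\lambda_\tau}+\text{(lower order terms in $x$)},
$$
and the same $(\rho,\sigma)$-homogeneity argument yields, with $z=x^{-\sigma/\rho}y$,
$$
\ell_{\rho,\sigma}(\varphi_1(P))=x^{\lambda_\tau}\,f_{P,\tau}(z+\lambda).
$$
The hypothesis on the multiplicity of $\lambda$ lets me factor $f_{P,\tau}(z+\lambda)=z^r h(z)$ with $h(0)\neq 0$. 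Therefore the support in $y$ of $\ell_{\rho,\sigma}(\varphi_1(P))$ ranges from $y$-degree $r$ (realized by the monomial $h(0)\,x^{\lambda_\tau-r\sigma/\rho}y^r$) up to $\deg f_{P,\tau}$; in particular the second coordinate of $\st_{\rho,\sigma}(\varphi_1(P))$ equals $r$.

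Next, I would transfer this to direction $(\rho_1,\sigma_1)$. The vertex $\st_{\rho,\sigma}(\varphi_1(P))$ is shared between the edge of the Newton polygon of $\varphi_1(P)$ at direction $(\rho,\sigma)$ and the edge at direction $\Pred_{\varphi_1(P)}(\rho,\sigma)$. For $(\rho_1,\sigma_1)$ strictly between these two, $\ell_{\rho_1,\sigma_1}(\varphi_1(P))$ collapses to the monomial at that shared vertex; at $(\rho_1,\sigma_1)=\Pred_{\varphi_1(P)}(\rho,\sigma)$, the shared vertex is the $\en$-endpoint of the predecessor edge. Either way $\en_{\rho_1,\sigma_1}(\varphi_1(P))=\st_{\rho,\sigma}(\varphi_1(P))$, so $v_{0,1}(\en_{\rho_1,\sigma_1}(\varphi_1(P)))=r>0$, as required. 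For the containment~\eqref{direccion en el intervalo}: if $\rho'>0$, then $\dir(j_1)=(\rho',\sigma')$ is exactly $\Pred_{\varphi_1(P)}(\rho,\sigma)$ and the inequality $j_1<j_0$ puts it strictly below $(\rho,\sigma)$; if $\rho'\le 0$, the predecessor lies at the boundary of the direction space, so any $j_1<j_0$ yields $\dir(j_1)$ in the required half-open interval.

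The main potential obstacle I foresee is the convex-geometric bookkeeping in the transfer step: one must verify carefully, against the conventions of~\cite{GGV1}, that at the predecessor direction the shared vertex is the $\en$-endpoint of the predecessor edge (and not the $\st$-endpoint), and that in the open wedge between $\Pred_{\varphi_1(P)}(\rho,\sigma)$ and $(\rho,\sigma)$ the leading form collapses precisely to the monomial at that vertex. Both facts express how $\st$ and $\en$ rotate around a vertex of a Newton polygon; once pinned down, the rest of the argument is a direct adaptation of the proof of Proposition~\ref{proposition definition pi root}.
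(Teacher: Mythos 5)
Your proposal is correct and follows essentially the same route as the paper's proof: you obtain $\ell_{\rho,\sigma}(\varphi_1(P))=x^{\lambda_\tau}f_{P,\tau}(z+\lambda)=x^{\lambda_\tau}z^r g_1(z)$ (the paper gets this by writing $\varphi_1=\widetilde\varphi\circ\varphi_\tau$ with $\widetilde\varphi$ a $(\rho,\sigma)$-homogeneous translation, which is the same computation as your evaluation argument), read off $r=v_{0,1}(\st_{\rho,\sigma}(\varphi_1(P)))$, and use $(\rho_1,\sigma_1)\in[\Pred_{\varphi_1(P)}(\rho,\sigma),(\rho,\sigma)[$ to identify this with $v_{0,1}(\en_{\rho_1,\sigma_1}(\varphi_1(P)))=|D_{\tau_1}^P|$ via Proposition~\ref{proposition definition pi root}. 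Your extra care with the Newton-polygon vertex bookkeeping only makes explicit what the paper leaves as ``by construction.''
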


\begin{proof}
Write $\varphi_1=\widetilde \varphi\circ \varphi$, where $\varphi$ is as in Proposition~\ref{proposition definition pi root},
$\widetilde\varphi(y)=y+\lambda x^{j_0}$ and $\widetilde\varphi(x)=x$. By equality~\eqref{igualdad para f de P y tau} and the fact that
$\widetilde\varphi$ is $(\rho,\sigma)$-homogeneous and $\widetilde\varphi(z)=z+\lambda$, we have
$$
\ell_{\rho,\sigma}(\varphi_1(P))=\widetilde\varphi(\ell_{\rho,\sigma}(\varphi(P)))=\widetilde\varphi(x^{\lambda_\tau}f_{P,\tau}(z))
=x^{\lambda_\tau}\widetilde\varphi(f_{P,\tau}(z))=x^{\lambda_\tau}z^r g_1(z),
$$
for some $g_1(z)\in K[z]$ with $g_1(0)\ne 0$. By construction,  $(\rho_1,\sigma_1)\in\! [\Pred_{\varphi_1(P)}(\rho,\sigma), (\rho,\sigma)[$\hspace{0.5pt}, and
so, by Proposition~\ref{proposition definition pi root}, we have
$$
r=v_{0,1}(\st_{\rho,\sigma}(\varphi_1(P)))=v_{0,1}(\en_{\rho_1,\sigma_1}(\varphi_1(P)))=|D_{\tau_1}^P|,
$$
as desired.
\end{proof}

\begin{corollary}\label{lambda tiene que ser raiz}
Let $\tau\coloneqq \sum_{j=1}^k a_jx^j+\pi x^{j_0}$ be a $\pi$-root of $P$ and let $\lambda\in K$. If $\pi-\lambda$ does not divide
$f_{P,\tau}(\pi)$, then there exists no root $\alpha\in\mathcal{R}(P)$ such that $\deg_x(\alpha-(\lambda x^{j_0}+\sum_{j=1}^k a_jx^j))<j_0$.
\end{corollary}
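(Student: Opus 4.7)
The plan is to prove the contrapositive: if some $\alpha\in\mathcal{R}(P)$ satisfies $\deg_x(\alpha-\beta)<j_0$, where $\beta\coloneqq\lambda x^{j_0}+\sum_{j=1}^k a_jx^j$, then $\pi-\lambda$ divides $f_{P,\tau}(\pi)$. The key step is an explicit factorization of $f_{P,\tau}(\pi)$ over $K$ whose linear factors encode the $x^{j_0}$-coefficients of the roots in $D_\tau^P$.

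To derive this factorization, I would start from $P=\prod_{i=1}^{M}(y-\alpha_i)$ (Newton--Puiseux) and combine it with the identity $P(\tau)=\ev_{\pi x^{j_0}}(\varphi_\tau(P))=\prod_i(\pi x^{j_0}-\hat\alpha_i)$ from the proof of Proposition~\ref{proposition definition pi root}, where $\hat\alpha_i\coloneqq\alpha_i-\sum_{j>j_0}a_jx^j$. Extracting the leading (in $x$) term of each factor: if $\alpha_i\in D_\tau^P$ then $\deg_x(\hat\alpha_i)\le j_0$ and the leading term of $\pi x^{j_0}-\hat\alpha_i$ is $(\pi-b_i)x^{j_0}$, where $b_i\in K$ is the coefficient of $x^{j_0}$ in $\alpha_i$ (possibly zero); if $\alpha_i\notin D_\tau^P$ then the leading term is a nonzero element of $K$ times $x^{d_i}$ for some $d_i>j_0$. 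Taking the product and matching with $P(\tau)=f_{P,\tau}(\pi)x^{\lambda_\tau}+\text{(terms of lower order in }x\text{)}$ gives
\[
f_{P,\tau}(\pi)=C\prod_{\alpha_i\in D_\tau^P}(\pi-b_i),\qquad C\in K^{\times}.
\]

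To finish, assume $\alpha\in\mathcal{R}(P)$ has $\deg_x(\alpha-\beta)<j_0$. Then $\alpha$ and $\beta$ agree at every $x$-degree $\ge j_0$: the coefficients at degrees $>j_0$ coincide with the $a_j$ of $\tau$, so $\alpha\in D_\tau^P$, and the coefficient of $x^{j_0}$ in $\alpha$ equals $\lambda$. Consequently $\pi-\lambda$ appears among the factors $\pi-b_i$ of the product above and divides $f_{P,\tau}(\pi)$, completing the contrapositive argument.

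The main obstacle is the bookkeeping in the leading-term extraction, in particular the edge case where some $\alpha_i\in D_\tau^P$ has vanishing $x^{j_0}$-coefficient (so $\deg_x(\hat\alpha_i)<j_0$ strictly). One has to verify that the uniform expression $(\pi-b_i)x^{j_0}$ with $b_i=0$ still records the correct leading (in $x$) term of $\pi x^{j_0}-\hat\alpha_i$, which it does since then $\pi x^{j_0}-\hat\alpha_i=\pi x^{j_0}+\text{(terms of lower $x$-degree)}$.
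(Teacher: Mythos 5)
Your proof is correct, but it takes a genuinely different route from the paper's. The paper argues top-down: it factors $f_{P,\tau}(\pi)=\prod_i(\pi-\lambda_i)^{m_i}$ and applies Proposition~\ref{factor genera pi raiz} to each root $\lambda_i$, producing $\pi$-roots $\tau_1(i)$ with $|D_{\tau_1(i)}^P|=m_i$ whose elements have $x^{j_0}$-coefficient $\lambda_i$; a degree count then forces $D_\tau^P=\bigcup_i D_{\tau_1(i)}^P$, so the $x^{j_0}$-coefficient of every element of $D_\tau^P$ is a root of $f_{P,\tau}$, which excludes $\lambda$. You instead work bottom-up from the Newton--Puiseux factorization, extracting leading terms in $P(\tau)=\prod_i(\pi x^{j_0}-\hat\alpha_i)$ to get the explicit identity $f_{P,\tau}(\pi)=C\prod_{\alpha_i\in D_\tau^P}(\pi-b_i)$ with $C\in K^{\times}$; the multiplicativity of leading terms you use implicitly is legitimate because the coefficient ring $K[\pi]$ is an integral domain, a point worth stating, and you correctly handle the edge case $b_i=0$. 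Your argument is more elementary and self-contained (it bypasses Proposition~\ref{factor genera pi raiz} and the disjointness/counting step, being in essence a sharpening of the root-by-root computation that proves \eqref{grado de f es numero de elementos} in Proposition~\ref{proposition definition pi root}), and it yields slightly more: the multiplicity of $\pi-\lambda$ in $f_{P,\tau}$ equals the number of elements of $D_\tau^P$ with $x^{j_0}$-coefficient $\lambda$, from which Remark~\ref{multiplicidad es cota para numero de raices mejor aproximadas} also follows at once. What the paper's route buys is that it stays within the $\varphi_\tau$/leading-form formalism and reuses Proposition~\ref{factor genera pi raiz}, which is needed elsewhere in any case.
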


\begin{proof}
Let $f_{P,\tau}(\pi)=\prod_{i=1}^k(\pi-\lambda_i)^{m_i}$. By Proposition~\ref{factor genera pi raiz} for each $i$ there exists $\tau_1(i)$ and $m_i$
roots in $D_{\tau_1(i)}^P\subset D_\tau^P$, for which $\Coeff_{x^{j_0}}=\lambda_i$. Since
$$
|D_\tau^P|=\deg(f_{P,\tau}(\pi))=\sum_{i=1}^{k}m_i=\sum_{i=1}^{k}|D_{\tau_1(i)}^P|,
$$
and the sets $D_{\tau_1(i)}^P$ are pairwise disjoint, we obtain $D_\tau^P=\bigcup_{i=1}^{k}D_{\tau_1(i)}^P$. Consequently, the coefficient of $x^{j_0}$ in each element of $D_\tau^P$ is a root of $f_{P,\tau}$. Since $\lambda$ is not a root of $f_{P,\tau}$, this finishes the proof.
\end{proof}

\begin{remark}\label{multiplicidad es cota para numero de raices mejor aproximadas}
The proof of the corollary shows that if the multiplicity of $\pi-\lambda$ in $f_{P,\tau}(\pi)$ is $r$, then any $\pi$-root $\tau_2$ of $P$ which
begins with $\lambda x^{j_0}+\sum_{j=1}^k a_jx^j$ satisfies $|D_{\tau_2}^P|\le r$.
\end{remark}

\begin{remark}\label{borde superior no nulo}
Let $\alpha\coloneqq\sum_j a_jx^j\in K((x^{-1/l}))$, $j_0\in\frac 1l \mathds{Z}$, $\tau\coloneqq \sum_{j>j_0}a_jx^j+\pi x^{j_0}$ and $(\rho,\sigma)\coloneqq \dir(j_0)$. Define
$T\coloneqq \sum_{j\le j_0}a_jx^j$.
Since
$$
P(\alpha)=\ev_{y=T}(\varphi_\tau(P)),
$$
we have
$\ell_{\rho,\sigma}(P(\alpha))=\ell_{\rho,\sigma}(\ev_{y=\lambda x^{j_0}}(\varphi_\tau(P)))$, whenever the right hand side of the equality is nonzero.
\end{remark}

\begin{proposition}
\label{P en tau coincide con P en alpha}
Let $\alpha=\sum_{j>j_0} a_j x^j+\lambda x^{j_0}+\sum_{j<j_0} a_j x^j$ and set
$\tau\coloneqq \sum_{j>j_0} a_j x^j+\pi x^{j_0}$. If $f_{P,\tau}(\lambda)\ne 0$, then
 $\lambda_\tau^P=\deg_x(P(\tau))=\deg_x(P(\alpha))$.
\end{proposition}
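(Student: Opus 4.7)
The plan is to reduce the computation of $\deg_x(P(\alpha))$ to the structural information about $\varphi_\tau(P)$ already extracted in Proposition~\ref{proposition definition pi root}. Setting $T\coloneqq \lambda x^{j_0}+\sum_{j<j_0}a_j x^j$, so that $\alpha=T+\sum_{j>j_0}a_j x^j$, Remark~\ref{borde superior no nulo} yields $P(\alpha)=\ev_{y=T}(\varphi_\tau(P))$, while Proposition~\ref{proposition definition pi root} gives $\ell_{\rho,\sigma}(\varphi_\tau(P))=x^{\lambda_\tau}f_{P,\tau}(z)$, where $z=x^{-j_0}y$ and $(\rho,\sigma)=\dir(j_0)$.

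Decomposing $\varphi_\tau(P)=\ell_{\rho,\sigma}(\varphi_\tau(P))+R$ with $v_{\rho,\sigma}(R)<\rho\lambda_\tau$, I would handle the two pieces separately. For the leading component, the expansion $T=\lambda x^{j_0}+(\text{terms of }x\text{-degree }<j_0)$ forces $T^b=\lambda^b x^{bj_0}+(\text{lower powers of }x)$, so after substitution each summand $c_b x^{\lambda_\tau-bj_0}y^b$ of $\ell_{\rho,\sigma}(\varphi_\tau(P))$ contributes $c_b\lambda^b x^{\lambda_\tau}$ to the top $x$-degree; these sum to $f_{P,\tau}(\lambda)x^{\lambda_\tau}$, which is nonzero by hypothesis.

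The second step is to verify that $\ev_{y=T}(R)$ has $x$-degree strictly less than $\lambda_\tau$. Each monomial $x^ay^b$ of $R$ satisfies $a\rho+b\sigma<\rho\lambda_\tau$, hence (using $\rho>0$ and $\sigma=\rho j_0$) $a+bj_0<\lambda_\tau$; since $\deg_x(T)\le j_0$, the substituted monomial $x^aT^b$ has $x$-degree at most $a+bj_0<\lambda_\tau$, and summing over the finitely many monomials of $R$ concludes the step. Combining both pieces yields $\deg_x(P(\alpha))=\lambda_\tau=\deg_x(P(\tau))$.

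The only subtlety I anticipate is the case $\lambda=0$, where $T^b$ actually has $x$-degree strictly less than $bj_0$ for every $b\ge 1$ and one might worry that the leading coefficient computation changes. However in that case only the $b=0$ summand of $\ell_{\rho,\sigma}(\varphi_\tau(P))$ contributes to the coefficient of $x^{\lambda_\tau}$, and that contribution equals $c_0=f_{P,\tau}(0)=f_{P,\tau}(\lambda)$, so the argument goes through unchanged. The whole proof rests on the strict inequality $v_{\rho,\sigma}(R)<\rho\lambda_\tau$ together with $\rho>0$.
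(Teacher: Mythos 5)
Your proof is correct and follows essentially the same route as the paper: both reduce $P(\alpha)$ to $\ev_{y=T}(\varphi_\tau(P))$ via Remark~\ref{borde superior no nulo} and use equality~\eqref{igualdad para f de P y tau} to see that the coefficient of $x^{\lambda_\tau}$ is $f_{P,\tau}(\lambda)\ne 0$ while everything else lies strictly lower. The only difference is presentational: the paper invokes the $(\rho,\sigma)$-homogeneity of $\ev_{y=\lambda x^{j_0}}$ to compute $\ell_{\rho,\sigma}(P(\alpha))=x^{\lambda_\tau}f_{P,\tau}(\lambda)$ in one line, whereas you unpack the same estimate monomial by monomial (your separate treatment of $\lambda=0$ is not really needed, since the argument is uniform in $\lambda$).
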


\begin{proof}
By Remark~\ref{borde superior no nulo}, equality~\eqref{igualdad para f de P y tau} and the fact that $\ev_{y=\lambda x^{j_0}}$ is $(\rho,\sigma)$-homogeneous, we have
$$
\ell_{\rho,\sigma}(P(\alpha))=\ell_{\rho,\sigma}(\ev_{y=\lambda x^{j_0}}(\varphi(P)))=\ev_{y=\lambda x^{j_0}}(\ell_{\rho,\sigma}(\varphi(P)))
=x^{\lambda_\tau^P}f_{P,\tau}(\lambda).
$$
Therefore
$$
\deg_x(P(\alpha)=\deg_x(\ell_{\rho,\sigma}(P(\alpha)))=\lambda_\tau^P=\deg_x(P(\tau)),
$$
as desired.
\end{proof}

\section{Approximate roots for Jacobian pairs}

\label{subseccion 21}
For the rest of the section we let $(P_0,Q_0)$ denote a Jacobian pair in $L$ satisfying the conditions required
in~\cite{GGV1}*{Corollary~5.21}, which in particular means that $(P_0,Q_0)$ is a minimal pair and a standard $(m,n)$-pair for
some coprime integers $m, n >1$. By~\cite{GGV1}*{Proposition~4.6(3)},
there exist $a<b$ in $\mathds{N}$ such that $\en_{1,0}(P_0)=m(a,b)$  and $\en_{1,0}(Q_0)=n(a,b)$.
So, by~\cite{GGV1}*{Corollary~5.21(4)}, we know that $\ell_{1,1}(P_0) = \lambda
x^{am}y^{bm}$ and $\ell_{1,1}(Q_0) = \lambda' x^{an}y^{bn}$ for some
$\lambda,\lambda'\in K^{\times}$. Replacing $P_0$ by $\frac{1}{\lambda}P_0$ and $Q_0$ by $\frac{1}{\lambda'}Q_0$,
we can assume that $\lambda=\lambda'=1$.
Let {$\psi$ be the automorphism of~$L$ defined by}
$\psi(y)\coloneqq y$ and $\psi(x)\coloneqq x+y$, and set $P\coloneqq \psi(P_0)$ and $Q\coloneqq \psi(Q_0)$ (see Figure~\ref{figura P cero}).
Since $\psi$ is $(1,1)$-homogeneous,
\begin{equation}\label{esquina superior}
\ell_{1,1}(P)=\psi(\ell_{1,1}(P_0))=(x+y)^{ma}y^{mb}\quad\text{and}\quad \ell_{1,1}(Q)=\psi(\ell_{1,1}(Q_0))=(x+y)^{na}y^{nb}.
\end{equation}
Hence, $P$ and $Q$ are monic polynomials in $y$ and moreover, a straightforward computation shows that
\begin{equation}\label{esquina superior uno cero}
\en_{1,0}(P)=m(a,b)\quad\text{and}\quad \en_{1,0}(Q)=m(a,b).
\end{equation}

\begin{figure}[htb]
\centering
\begin{tikzpicture}
\fill[gray!20] (0,0) -- (0.25,0.75) -- (1,2.5) -- (1,1.25) -- (0.5,0) -- (0,0);
\draw  [thick] (0.25,0.75) -- (1,2.5) -- (1,1.25);
\draw[step=.5cm,gray,very thin] (0,0) grid (2,3.6);
\draw [->] (0,0) -- (2.3,0) node[anchor=north]{$x$};
\draw [->] (0,0) --  (0,4) node[anchor=east]{$y$};
\draw[dotted] (0,0) -- (2.3,2.3);
\draw[dotted] (0,1.5) -- (1,2.5);
\draw [->,thick] (1.5,2.5) -- (2,2.5) node[fill=white,anchor=west]{\tiny{(1,0)}};
\draw [->,thick] (0.5,2.5) -- (-0.5,3) node[fill=white,anchor=east]{\tiny{$\Succ_{P_0}(1,0)$}};
\fill[gray!20] (6,0) -- (6,3.5) -- (7,2.5) -- (7,1.25) -- (6.5,0) -- (6,0);
\draw  [thick] (6,3.5) -- (7,2.5) -- (7,1.25);
\draw[step=.5cm,gray,very thin] (6,0) grid (8,3.6);
\draw [->] (6,0) -- (8.3,0) node[anchor=north]{$x$};
\draw [->] (6,0) --  (6,4) node[anchor=east]{$y$};
\draw[dotted] (6,0) -- (8.3,2.3);
%
\draw [->,thick] (7.5,2.5) -- (8,2.5) node[fill=white,anchor=west]{\tiny{(1,0)}};
\draw[->] (3,2) .. controls (4,2.25) .. (5,2);
\draw (3.8,2.7) node[fill=white,anchor=west]{\Large$\psi$};
\draw (6.2,2) node[fill=white,anchor=west]{\Large$P$};
\draw [->,thick] (7,3) -- (7.5,3.5) node[fill=white,anchor=west]{\tiny{$(1,1)$}};
\end{tikzpicture}
\caption{The shapes of $P_0$ according to~\cite{GGV1}*{Corollary~5.21(4)} and of $P$ according to~\eqref{esquina superior} and~\eqref{esquina superior uno cero}.}
\label{figura P cero}
\end{figure}
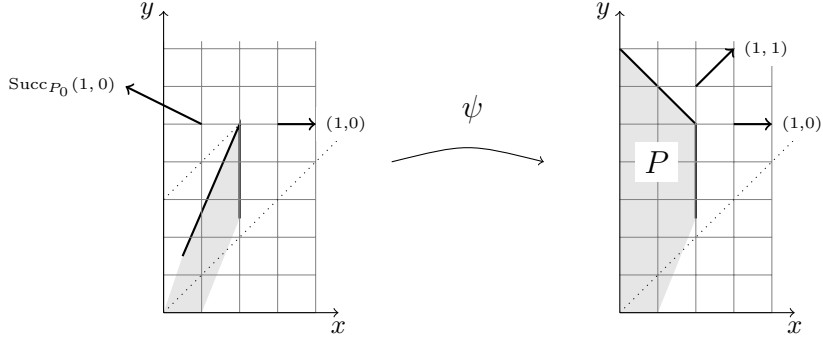

\begin{remark}\label{nm par}
In the sequel we will establish several results about $P$, but, since by~\cite{GGV1}*{Proposition~4.6} we know that $(Q,P)$ is an $(n,m)$-pair,
the same results are valid mutatis mutandis for~$Q$.
\end{remark}

\begin{proposition}\label{multiple roots}
Let $\alpha\in\mathcal{R}(P)$ and let $\tau$ be the $\pi$-approximation of $\alpha$ up to $x^{j_0}$. Assume that $\lambda_\tau\coloneqq
\deg_x(P(\tau))>0$, and take $\varphi$ and $(\rho,\sigma)$ as in Proposition~\ref{proposition definition pi root}. The following facts hold:
\begin{enumerate}

\item If $f_{P,\tau}$ has multiple roots, then $[\ell_{\rho,\sigma}(\varphi(P)),\ell_{\rho,\sigma}(\varphi(Q))]=0$.

\smallskip

\item If $[\ell_{\rho,\sigma}(\varphi(P)),\ell_{\rho,\sigma}(\varphi(Q))]=0$, then there exists $\beta\in \mathcal{R}(Q)$ such that $\deg_x(\alpha-\beta)<j_0$.
\end{enumerate}
\end{proposition}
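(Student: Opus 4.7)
Set $A:=\ell_{\rho,\sigma}(\varphi(P))$ and $B:=\ell_{\rho,\sigma}(\varphi(Q))$. By Proposition~\ref{proposition definition pi root}, $A=x^{\lambda_\tau}f_{P,\tau}(z)$; write $B=x^{v_B/\rho}h_B(z)$ analogously, and note $v_A:=v_{\rho,\sigma}(\varphi(P))=\rho\lambda_\tau>0$. Since $\varphi$ has Jacobian~$1$, $[\varphi(P),\varphi(Q)]=[P,Q]\in K^\times$. Using the $(\rho,\sigma)$-homogeneity of $A$ and $B$, a direct computation in the coordinates $(x,z)$ (with the $(\sigma/\rho)\,z\,f_{P,\tau}'h_B'$ cross-terms cancelling) yields
\[
[A,B]=\frac{1}{\rho}\,x^{(v_A+v_B-\rho-\sigma)/\rho}\bigl(v_A\,f_{P,\tau}(z)\,h_B'(z)-v_B\,f_{P,\tau}'(z)\,h_B(z)\bigr).
\]
This identity drives both parts.

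For part~(1), if $f_{P,\tau}$ has a factor $(z-\mu)^k$ with $k\ge 2$, then $(z-\mu)^{k-1}$ divides both $f_{P,\tau}$ and $f_{P,\tau}'$, and hence divides the polynomial in the parentheses above. If $[A,B]$ were nonzero, it would have to coincide with the $(\rho,\sigma)$-leading form of the nonzero constant $[P,Q]$, i.e.\ with an element of $K^\times$; but such an element cannot be divisible by $(z-\mu)^{k-1}$ when $k\ge 2$. Hence $[A,B]=0$.

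For part~(2), assume $[A,B]=0$ and apply \cite{GGV1}*{Proposition~2.1(2)} to the commuting $(\rho,\sigma)$-homogeneous pair $A,B$ to obtain a common $(\rho,\sigma)$-homogeneous element $R\in L^{(l)}$, scalars $\mu_P,\mu_Q\in K^\times$ and integers $m',n'\ge 0$ with $A=\mu_P R^{m'}$ and $B=\mu_Q R^{n'}$. Because $\deg_z f_{P,\tau}\ge 1$ we have $m'\ge 1$ and $R=x^{u/\rho}h_R(z)$ with $\deg h_R\ge 1$, so that $f_{P,\tau}$ is a scalar multiple of $h_R^{m'}$. By Corollary~\ref{lambda tiene que ser raiz}, the coefficient $a_{j_0}$ of $x^{j_0}$ in $\alpha$ is a root of $f_{P,\tau}$, hence of $h_R$. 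Provided $n'\ge 1$, $a_{j_0}$ is then also a root of $h_B$, and Proposition~\ref{factor genera pi raiz} applied to $Q$ with $\lambda=a_{j_0}$ produces a $\pi$-root $\tau_1=\sum_{j>j_0}a_jx^j+a_{j_0}x^{j_0}+\pi x^{j_1}$ of $Q$; any $\beta\in D_{\tau_1}^Q$ then agrees with $\alpha$ at every $x^j$ with $j\ge j_0$, so $\deg_x(\alpha-\beta)<j_0$.

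The main obstacle is ruling out the degenerate case $n'=0$, equivalently $h_B$ constant, equivalently $D_\tau^Q=\emptyset$. The displayed identity forces this case to satisfy $v_B=0$ and $B$ to be a nonzero scalar. One must argue, using the concrete setup of Section~\ref{subseccion 21} (in particular $P$ and $Q$ being monic in $y$ with $\en_{1,0}(P)=m(a,b)$ and $\en_{1,0}(Q)=n(a,b)$), together with the hypothesis $\lambda_\tau>0$ and the Jacobian identity $[\varphi(P),\varphi(Q)]\in K^\times$, that $v_{\rho,\sigma}(\varphi(Q))\ne 0$; this is the delicate step of the argument.
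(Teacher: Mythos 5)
Your part~(1) is correct and is essentially the paper's argument: $(z-\mu)^{k-1}$ divides $[\ell_{\rho,\sigma}(\varphi(P)),\ell_{\rho,\sigma}(\varphi(Q))]$, and this bracket, if nonzero, is the leading form of $[\varphi(P),\varphi(Q)]=[P,Q]\in K^{\times}$, hence a nonzero constant; the explicit formula you display is harmless but not needed.

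Part~(2) has a genuine gap, and it sits exactly where you flag it. The entire content of the statement in your ``degenerate case'' is that $\tau$ is in fact a $\pi$-root of $Q$, i.e.\ that $\ell_{\rho,\sigma}(\varphi(Q))$ is not a monomial (equivalently $n'\ge 1$, $D_\tau^Q\ne\emptyset$), and your text simply says that ``one must argue'' that $v_{\rho,\sigma}(\varphi(Q))\ne 0$ and stops. This cannot be obtained from your bracket identity, since that identity only uses $[\ell_{\rho,\sigma}(\varphi(P)),\ell_{\rho,\sigma}(\varphi(Q))]=0$, which is perfectly compatible with $\ell_{\rho,\sigma}(\varphi(Q))$ being a nonzero constant; nor does the valuation estimate of \cite{GGV1}*{Proposition~1.13} exclude it, because here $\rho+\sigma$ may well be negative ($j_0=\sigma/\rho$ need not be $\ge -1$), so $v_{\rho,\sigma}(\varphi(P))+v_{\rho,\sigma}(\varphi(Q))>\rho+\sigma$ can hold with $v_{\rho,\sigma}(\varphi(Q))=0$. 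What closes the case in the paper is \cite{GGV1}*{Proposition~2.1(2)b)} applied to the Jacobian pair $(\varphi(P),\varphi(Q))$ itself, not to an abstract pair of commuting $(\rho,\sigma)$-homogeneous elements: since $[\varphi(P),\varphi(Q)]\in K^{\times}$, the leading brackets vanish and $v_{\rho,\sigma}(\varphi(P))=\rho\lambda_\tau>0$, that result gives directly that the linear factor $z-\lambda$ of $\ell_{\rho,\sigma}(\varphi(P))$ (with $\lambda$ the coefficient of $x^{j_0}$ in $\alpha$, a root of $f_{P,\tau}$ by Corollary~\ref{lambda tiene que ser raiz}) divides $\ell_{\rho,\sigma}(\varphi(Q))$; in particular the degenerate case cannot occur, and the rest of your construction of $\beta$ via Proposition~\ref{factor genera pi raiz} then proceeds exactly as in the paper. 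Note also that your reading of \cite{GGV1}*{Proposition~2.1(2)} as a statement about any commuting homogeneous pair with exponents $m',n'\ge 0$ weakens it into something that indeed cannot decide $n'\ge 1$: it is precisely the Jacobian hypothesis on the whole polynomials, not on their leading forms, that does the work you left undone.
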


\begin{proof}
Write $\tau\coloneqq \sum_{j=1}^k a_jx^j+\pi x^{j_0}$. By~Proposition~\ref{factor genera pi raiz} there exists $j_1<j_0$ such that
$$
\tau_1\coloneqq \sum_{j=1}^k a_jx^j+\lambda x^{j_0}+\pi x^{j_1}
$$
is a $\pi$-root of $P$. Now we prove the statements~(1) and (2).

\smallskip

\noindent(1)\enspace Since $\ell_{\rho,\sigma}(\varphi(P))=x^{\lambda_\tau}f_{P,\tau}(z)$ (see equality~\eqref{igualdad para f de P y tau}), by
hypothesis there exist $k>1$ and $\lambda\in K$ such that $(z-\lambda)^k$ divides $\ell_{\rho,\sigma}(\varphi(P))$. Consequently $(z-\lambda)^{k-1}$
divides $[\ell_{\rho,\sigma}(\varphi(P)),\ell_{\rho,\sigma}(\varphi(Q))]$. Since $[\ell_{\rho,\sigma}(\varphi(P)),\ell_{\rho,\sigma}(\varphi(Q))]\in
K$, this implies that $[\ell_{\rho,\sigma}(\varphi(P)),\ell_{\rho,\sigma}(\varphi(Q))]=0$.

\smallskip

\noindent(2)\enspace Let $(z-\lambda)$ be a linear factor of $\ell_{\rho,\sigma}(\varphi(P))$. Since $v_{\rho,\sigma}(P)=\rho \lambda_\tau>0$,
from~\cite{GGV1}*{Proposition~2.1(2)b)} it follows that $(z-\lambda)$ divides $\ell_{\rho,\sigma}(\varphi(Q))$. Hence, by Proposition~\ref{proposition
definition pi root} we know that $\tau$ is a $\pi$-root of $Q$ and so, by~Proposition~\ref{factor genera pi raiz}, there exists $j_2<j_0$ such that
$$
\tau_2\coloneqq \sum_{j=1}^k a_jx^j+\lambda x^{j_0}+\pi x^{j_2}
$$
is a $\pi$-root of $Q$. From this it follows immediately that for any $\alpha\in D_{\tau_1}^P$ and $\beta\in D_{\tau_2}^Q$ the inequality
$\deg_x(\alpha-\beta)<j_0$ holds, as desired.
\end{proof}

\begin{remark}\label{orden de los lambda}
Let $\alpha=\sum a_j x^j \in \mathcal{R}(P)$. Assume that $j_0>j_1$, $\tau$ approximates $\alpha$ up to $x^{j_0}$ and  $\tau_1$ approximates $\alpha$
up to $x^{j_1}$. Then $\lambda_\tau> \lambda_{\tau_1}$. In fact, setting $(\rho,\sigma)\coloneqq\dir(j_0)$ and $(\rho_1,\sigma_1)\coloneqq\dir(j_1)$,
equality~\eqref{igualdad para f de P y tau} and~\cite{GGV1}*{Proposition~3.9} show that
$$
v_{\rho,\sigma}(x^{\lambda_\tau})=v_{\rho,\sigma}(\varphi(P))=v_{\rho,\sigma}(\varphi_1(P))\ge v_{\rho,\sigma}(\en_{\rho_1,\sigma_1}\varphi_1(P)),
$$
where $\varphi\coloneqq \varphi_{\tau}$ and $\varphi_1\coloneqq \varphi_{\tau_1}$.
Moreover, a direct computation using that $(\rho,\sigma)>(\rho_1,\sigma_1)$, $v_{\rho_1,\sigma_1}(\varphi_1(P))=
v_{\rho_1,\sigma_1}(x^{\lambda_{\tau_1}})$ and $v_{0,1}(\en_{\rho_1,\sigma_1}(\varphi_1(P)))>v_{0,1}(x^{\lambda_{\tau_1}})$, shows that
$$
v_{\rho,\sigma}(\en_{\rho_1,\sigma_1}(\varphi_1(P)))>v_{\rho,\sigma}(x^{\lambda_{\tau_1}}).
$$
Since $\rho>0$, this proves that $\lambda_\tau>
\lambda_{\tau_1}$.
\end{remark}

\begin{proposition}\label{approximation with lambda equal to 0}
Let $\alpha\in\mathcal{R}(P)$. There exists $j_0$ such that $\lambda_\tau=0$ for the $\pi$-approximation $\tau$ of $\alpha$ up to $x^{j_0}$.
\end{proposition}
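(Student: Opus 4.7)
The plan is to derive a closed-form expression for $\lambda_\tau$ in terms of the Puiseux roots of $P$, to identify the real value of $j_0$ at which $\lambda_\tau = 0$, and then to verify that this value lies in $\frac{1}{l}\mathds{Z}$.

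First I would use the Newton--Puiseux factorization $P = \prod_{i=1}^{M}(y-\alpha_i)$ to write
$$
\lambda_\tau = \deg_x(P(\tau)) = \sum_{i=1}^M \deg_x(\tau - \alpha_i).
$$
Case analysis on each root gives $\deg_x(\tau - \alpha_i) = j_0$ when $\alpha_i \in D_\tau^P$ (then the coefficients of $\alpha_i$ and $\alpha$ agree at all exponents above $j_0$, so $\tau - \alpha_i = (\pi - b_i) x^{j_0} + (\text{lower})$), and $\deg_x(\tau - \alpha_i) = \deg_x(\alpha - \alpha_i) > j_0$ otherwise, since then the leading $x$-term of $\tau-\alpha_i$ coincides with that of $\alpha-\alpha_i$. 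Summing yields the clean formula
$$
\lambda_\tau(j_0) = \sum_{i=1}^{M} \max\bigl(\deg_x(\alpha - \alpha_i),\ j_0\bigr),
$$
with the convention $\deg_x(0) = -\infty$ for the summand $\alpha_i = \alpha$. As a function of the real variable $j_0$ this is continuous, strictly increasing, piecewise linear with positive integer slope (equal to $|D_\tau^P|$ on each piece), and ranges from $-\infty$ to $+\infty$; hence $\lambda_\tau = 0$ at a unique real $j_0^*$.

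The remaining task is to see that $j_0^* \in \frac{1}{l}\mathds{Z}$. In the regime $j_0 < \min_{\alpha_i \ne \alpha} \deg_x(\alpha-\alpha_i)$, where $|D_\tau^P| = 1$, the formula simplifies to $\lambda_\tau = j_0 + S$ with $S \coloneqq \sum_{\alpha_i \ne \alpha} \deg_x(\alpha - \alpha_i) \in \frac{1}{l}\mathds{Z}$, so the zero sits at $j_0 = -S \in \frac{1}{l}\mathds{Z}$, compatible with the regime as long as $-S$ is small enough. The main obstacle is the case where $-S$ falls outside this regime, so the zero of $\lambda_\tau$ lies on a piece of slope $k > 1$, where lattice compatibility of the crossing with $\frac{1}{l}\mathds{Z}$ is not automatic. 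To handle this I would iterate Proposition~\ref{factor genera pi raiz} using $\lambda = a_{j_0}$ (a root of $f_{P,\tau}$ by Corollary~\ref{lambda tiene que ser raiz}): at each step the new $\pi$-approximation of $\alpha$ has strictly smaller $\lambda_\tau$ by Remark~\ref{orden de los lambda}, and the branch of Proposition~\ref{factor genera pi raiz} where $\rho' = 0$, which permits a free choice of $j_1 \in \frac{1}{l}\mathds{Z}$ below $j_0$, is exactly what allows us to land on $\lambda_\tau = 0$ rather than overshooting it along a forced Newton-polygon step.
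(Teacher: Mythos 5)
Your first two paragraphs are correct, and they recover exactly what the paper's proof establishes, by a slightly different (more explicit) route: the paper applies $\varphi_0(y)=y+\alpha$ and takes the minimal direction $(\rho_0,\sigma_0)$, $\rho_0>0$, orthogonal to a line joining a point of $\Supp(\varphi_0(P))$ to the origin, so that $v_{\rho_0,\sigma_0}(\varphi_0(P))=0$, and then sets $j_0=\sigma_0/\rho_0$, invoking \cite{GGV1}*{Proposition~3.9} and equality~\eqref{igualdad para f de P y tau}. Unwinding that construction, $j_0=-\max\{u/v:(u,v)\in\Supp(\varphi_0(P))\}$, which is precisely the unique zero $j_0^*$ of your function $\sum_i\max(\deg_x(\alpha-\alpha_i),j_0)$ (your sum is the support function of the Newton polygon of $\varphi_0(P)$, since $\varphi_0(P)=\prod_i(y-(\alpha_i-\alpha))$). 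So your explicit piecewise-linear argument is a legitimate, self-contained substitute for the paper's Newton-polygon argument.

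The gap is your third paragraph. The iteration of Proposition~\ref{factor genera pi raiz} is stated only as an intention: you give no argument that the strictly decreasing (by Remark~\ref{orden de los lambda}) values $\lambda_\tau$ ever equal $0$ at a truncation level in $\frac1l\mathds{Z}$ rather than jumping over $0$. Moreover no argument of this kind can work, because the lattice claim is false at this level of generality: neither your computation nor the paper's proof uses the Jacobian hypotheses on $P$, and for a general monic $P$ the crossing can sit strictly inside a piece of slope $\ge 2$ off the lattice. For example, $P=y(y^2-py+c)$ with $p,c\in K[x]$ of degrees $13$ and $3$ has all three roots in $K((x^{-1}))$ (so one may take $l=1$), of $x$-degrees $-\infty$, $-10$, $13$ relative to $\alpha=0$; your function equals $2j_0+13$ on $[-10,13]$ and vanishes only at $j_0=-13/2\notin\mathds{Z}$. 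The resolution is that the paper never proves, nor needs, $j_0\in\frac1l\mathds{Z}$: its own $j_0=-u/v$ has denominator dividing $lv$ with $v\le\deg_y P$, so the statement is to be read with $j_0$ merely rational (equivalently, after enlarging $l$ by a bounded factor). With that reading, your first two paragraphs already constitute a complete proof, and the third paragraph should be deleted or replaced by the observation that $j_0^*$ is rational with denominator dividing $l\cdot\deg_y(P)$, which is all the paper's construction yields as well.
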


\begin{proof}
Let $\varphi_0\in\Aut(K((x^{-1/l}))[y])$ be given by $\varphi_0(x^{1/l})=x^{1/l}$ and $\varphi_0(y)=y+\alpha$. We will construct a direction
$(\rho_0,\sigma_0)\in ](0,-1),(0,1)[$ such that $v_{\rho_0,\sigma_0}(\varphi_0(P))=0$. In order to do this, for each point of $\Supp(\varphi_0(P))$,
we consider the direction $(\rho,\sigma)\in ](0,-1),(0,1)[$ orthogonal to the line that passes through that point and through the origin. The minimum
$(\rho_0,\sigma_0)$ of these directions satisfies $v_{\rho_0,\sigma_0}(\varphi_0(P))=0$. Set $j_0\coloneqq \frac{\sigma_0}{\rho_0}$. We assert that
the $\pi$-approximation
$$
\tau=\sum_{j>j_0}a_j x^j+\pi t^{j_0}
$$
of $\alpha$ up to $x^{j_0}$, satisfies $\lambda_\tau=0$. In fact, we have
$$
0=v_{\rho_0,\sigma_0}(\varphi_0(P))=v_{\rho_0,\sigma_0}(\varphi_\tau(P))=v_{\rho_0,\sigma_0}(x^{\lambda_\tau})=\rho_0 \lambda_\tau,
$$
where  the second equality follows using~\cite{GGV1}*{Proposition~3.9} and the third equality, from~\eqref{igualdad para f de P y tau}.
\end{proof}

\begin{proposition}\label{common pi roots}
Let $\tau\coloneqq \sum_{j=1}^k a_jx^j+\pi x^{j_0}$ be a $\pi$-root of $P$, and let $(\rho,\sigma)$, $\lambda_\tau$ and $\varphi$ be as in
Proposition~\ref{proposition definition pi root}. If $\tau$ is also a $\pi$-root of $Q$ and $\lambda_\tau\ge 0$, then
$$
\en_{\rho,\sigma}(\varphi(Q))=\frac nm \en_{\rho,\sigma}(\varphi(P))\quad\text{and}\quad \frac{|D_{\tau}^Q|}{|D_{\tau}^P|}=\frac nm.
$$
\end{proposition}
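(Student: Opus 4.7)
The plan is to exploit the fact that $\varphi = \varphi_\tau$ has Jacobian determinant~one (the matrix of partials with respect to $(x^{1/l},y)$ is unipotent), so $(\varphi P,\varphi Q)$ is still a Jacobian pair with $[\varphi P,\varphi Q] = [P,Q]\in K^{\times}$. By Proposition~\ref{proposition definition pi root} applied to both $P$ and $Q$, we have
$$
\ell_{\rho,\sigma}(\varphi P) = x^{\lambda_\tau^P}f_{P,\tau}(z)\quad\text{and}\quad \ell_{\rho,\sigma}(\varphi Q) = x^{\lambda_\tau^Q}f_{Q,\tau}(z),
$$
with $\deg f_{P,\tau}=|D_\tau^P|\ge 1$ and $\deg f_{Q,\tau}=|D_\tau^Q|\ge 1$, since $\tau$ is a $\pi$-root of each. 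The strategy is to first establish that $[\ell_{\rho,\sigma}(\varphi P),\ell_{\rho,\sigma}(\varphi Q)]=0$, and then apply~\cite{GGV1}*{Proposition~2.1(2)}.

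To show the bracket vanishes, invoke~\cite{GGV1}*{Proposition~1.13}: since $v_{\rho,\sigma}([\varphi P,\varphi Q])=v_{\rho,\sigma}([P,Q])=0$, the non-vanishing of $[\ell_{\rho,\sigma}(\varphi P),\ell_{\rho,\sigma}(\varphi Q)]$ would force the identity $v_{\rho,\sigma}(\varphi P)+v_{\rho,\sigma}(\varphi Q)=\rho+\sigma$. So it suffices to rule out this equality. The hypothesis $\lambda_\tau\ge 0$ yields $v_{\rho,\sigma}(\varphi P)=\rho\lambda_\tau^P\ge 0$; by the symmetry observed in Remark~\ref{nm par} (which lets us swap the roles of $P$ and $Q$ with $m$ and $n$) together with $\tau$ being a $\pi$-root of $Q$, one obtains a companion estimate for $\varphi Q$. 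Splitting into the subcases $\rho+\sigma\le 0$ and $\rho+\sigma>0$ and adapting the estimates of Proposition~\ref{jacobiano se anula}---using $|D_\tau^P|,|D_\tau^Q|\ge 1$ to control the $y$-coordinates of $\en_{\rho,\sigma}(\varphi P)$ and $\en_{\rho,\sigma}(\varphi Q)$---one obtains the strict inequality $v_{\rho,\sigma}(\varphi P)+v_{\rho,\sigma}(\varphi Q)>\rho+\sigma$, hence $[\ell_{\rho,\sigma}(\varphi P),\ell_{\rho,\sigma}(\varphi Q)]=0$.

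With the bracket zero, apply~\cite{GGV1}*{Proposition~2.1(2)} to the Jacobian pair $(\varphi P,\varphi Q)$, whose $y$-degrees are $mb$ and $nb$ with $\gcd(m,n)=1$. This produces $\lambda_1,\lambda_2\in K^{\times}$ and a $(\rho,\sigma)$-ho\-mo\-ge\-neous element $R\in L^{(l)}$ with $\ell_{\rho,\sigma}(\varphi P)=\lambda_1 R^m$ and $\ell_{\rho,\sigma}(\varphi Q)=\lambda_2 R^n$. Reading off end-points,
$$
\en_{\rho,\sigma}(\varphi P)=m\,\en_{\rho,\sigma}(R),\qquad \en_{\rho,\sigma}(\varphi Q)=n\,\en_{\rho,\sigma}(R),
$$
which gives the first claim $\en_{\rho,\sigma}(\varphi Q)=\tfrac nm\en_{\rho,\sigma}(\varphi P)$. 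Applying $v_{0,1}$ to both sides and using equality~\eqref{grado de f es numero de elementos} from Proposition~\ref{proposition definition pi root} immediately yields $|D_\tau^Q|/|D_\tau^P|=n/m$.

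The main obstacle is the strict inequality step: showing $v_{\rho,\sigma}(\varphi P)+v_{\rho,\sigma}(\varphi Q)>\rho+\sigma$ cleanly from $\lambda_\tau\ge 0$ plus the $\pi$-root conditions. The hazard is the degenerate situation where both $|D_\tau^P|=|D_\tau^Q|=1$ and the leading forms are linear in $z$ with distinct roots, in which case a direct computation of $[\ell,\ell]$ could a priori yield a nonzero constant; ruling this out requires unpacking the $(m,n)$-pair structure of $(P,Q)$ coming from $(P_0,Q_0)$ via $\psi$ and using $m\ne n$ to force the necessary cancellations.
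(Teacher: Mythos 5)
Your plan hinges on first proving $[\ell_{\rho,\sigma}(\varphi P),\ell_{\rho,\sigma}(\varphi Q)]=0$ at the direction $(\rho,\sigma)=\dir(j_0)$ itself, via the strict inequality $v_{\rho,\sigma}(\varphi P)+v_{\rho,\sigma}(\varphi Q)>\rho+\sigma$. That intermediate claim is false in exactly the situation this proposition is needed for. When $\tau$ is a \emph{major} final $\pi$-root (the case used in Proposition~\ref{raices menores y raices mayores}(2) and Lemma~\ref{Q en alpha es Q en tau}), one has $\rho+\sigma>0$, $v_{\rho,\sigma}(\varphi P)=\rho\lambda_\tau>0$, and the bracket of the leading forms is a \emph{nonzero} constant -- indeed item (2)a) of Proposition~\ref{raices menores y raices mayores} proves this via Proposition~\ref{multiple roots}(2), and Example~\ref{ejemplo 16} exhibits such corners of type Ib. By \cite{GGV1}*{Proposition~1.13} this forces the exact equality $v_{\rho,\sigma}(\varphi P)+v_{\rho,\sigma}(\varphi Q)=\rho+\sigma$, so no refinement of the estimates of Proposition~\ref{jacobiano se anula} can give the strict inequality you need, and the subsequent appeal to \cite{GGV1}*{Proposition~2.1(2)} (which requires the vanishing bracket) is unavailable. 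The ``hazard'' you flag at the end is therefore not a degenerate corner case with $|D_\tau^P|=|D_\tau^Q|=1$; it is the generic major-root case, with $|D_\tau^P|=mb>1$, and it cannot be ruled out -- the conclusion of the proposition must be reached \emph{without} knowing that the bracket vanishes at $(\rho,\sigma)$.

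The paper's proof circumvents this by never working at $(\rho,\sigma)$ alone: it lists the directions $\Dir(\varphi(P))\cap[(\rho,\sigma),(1,1)]=\{(\rho_0,\sigma_0)<\dots<(\rho_k,\sigma_k)=(1,1)\}$ and argues by descending induction from the $(1,1)$-corner, where \eqref{esquina superior} already gives the exact $n/m$-proportionality of the leading forms. At each \emph{intermediate} direction $(\rho_i,\sigma_i)$ with $i>0$ the bracket does vanish, because otherwise \cite{GGV1}*{Proposition~1.13} together with $[\varphi P,\varphi Q]\in K^\times$ forces $v_{0,1}(\st_{\rho_i,\sigma_i}(\varphi P))+v_{0,1}(\st_{\rho_i,\sigma_i}(\varphi Q))\le 1$, which can only happen at $i=0$; moreover $v_{\rho_i,\sigma_i}(\varphi P)=\rho_i\lambda_{\tau_i}>0$ by Remark~\ref{orden de los lambda}. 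This lets one propagate $\st_{\rho_i,\sigma_i}(\varphi Q)=\frac nm\st_{\rho_i,\sigma_i}(\varphi P)$ down the chain (via \cite{GGV1}*{Remark~3.1} and $\en_{\rho_i,\sigma_i}=\st_{\rho_{i+1},\sigma_{i+1}}$), and the desired statement at $(\rho,\sigma)=(\rho_0,\sigma_0)$ is read off as $\en_{\rho_0,\sigma_0}(\varphi P)=\st_{\rho_1,\sigma_1}(\varphi P)$ (and likewise for $Q$), with no hypothesis on the bracket at $(\rho_0,\sigma_0)$. If you want to repair your write-up, this global induction along the Newton polygon between $(\rho,\sigma)$ and $(1,1)$ is the missing idea; the purely local argument at $(\rho,\sigma)$ cannot be made to work.
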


\begin{proof}
Write $\Dir(\varphi(P))\cap[(\rho,\sigma),(1,1)]=\{(\rho,\sigma)=(\rho_0,\sigma_0)<(\rho_1,\sigma_1)<\dots <(\rho_k,\sigma_k)=(1,1)\}$. Take
$\alpha\in D_\tau^P$ and $0\le i\le k$. Let $j_i\coloneqq \frac{\sigma_i}{\rho_i}$ and let $\tau_i$ be the $\pi$-approximation of $\alpha$ up to
$x^{j_i}$. Set $\lambda_{\tau_i}\coloneqq \deg_x(P(\tau_i))$ and $\varphi_i\coloneqq \varphi_{\tau_i}$.
Since
$$
[\ell_{\rho_i,\sigma_i}(\varphi(P)),\ell_{\rho_i,\sigma_i}(\varphi(Q))]\in K,
$$
if $[\ell_{\rho_i,\sigma_i}(\varphi(P)),\ell_{\rho_i,\sigma_i}(\varphi(Q))]\ne 0$, then $v_{0,-1}([\ell_{\rho_i,\sigma_i}(\varphi(P)),
\ell_{\rho_i,\sigma_i}(\varphi(Q))])=0$ and then, by~\cite{GGV1}*{Proposition~1.13},
$$
0=v_{0,-1}([\ell_{\rho_i,\sigma_i}(\varphi(P)),\ell_{\rho_i,\sigma_i}(\varphi(Q))])\le v_{0,-1}(\ell_{\rho_i,\sigma_i}(\varphi(P)))+
v_{0,-1}(\ell_{\rho_i,\sigma_i}(\varphi(Q)))-(-1+0),
$$
which implies that
$$
v_{0,-1}(\st_{\rho_i,\sigma_i}(\varphi (P))+v_{0,-1}(\st_{\rho_i,\sigma_i}(\varphi (Q))\ge -1,
$$
or, equivalently,
$$
v_{0,1}(\st_{\rho_i,\sigma_i}(\varphi (P))+v_{0,1}(\st_{\rho_i,\sigma_i}(\varphi (Q))\le 1.
$$
This implies that $i=0$. Hence, if $i>0$, then $[\ell_{\rho_i,\sigma_i}(\varphi(P)),\ell_{\rho_i,\sigma_i}(\varphi(Q))]=0$, and since by
Remark~\ref{orden de los lambda} we know that $\lambda_{\tau_i}>0$, we have
$$
v_{\rho_i,\sigma_i}(\varphi(P))=v_{\rho_i,\sigma_i}(\varphi_i(P))=\rho_i \lambda_{\tau_i}>0,
$$
where the first equality follows from~\cite{GGV1}*{Proposition~3.9} and the second one from~\eqref{igualdad para f de P y tau}. Now, an inductive
argument using~\eqref{esquina superior}, \cite{GGV1}*{Remark~3.1} and that $\en_{\rho_i,\sigma_i}(\varphi(P)))=
\st_{\rho_{i+1},\sigma_{i+1}}(\varphi(P)))$ for $i=k,\dots,1$, proves that
$$
v_{\rho_i,\sigma_i}(\varphi(Q))>0\quad\text{and}\quad\st_{\rho_i,\sigma_i}(\varphi(Q))=\frac nm \st_{\rho_i,\sigma_i}(\varphi(P)),
\quad\text{for $i=k,\dots,1$.}
$$
for $i=k,\dots,1$. Hence
$$
\en_{\rho_0,\sigma_0}(\varphi(Q))=\frac nm \en_{\rho_0,\sigma_0}(\varphi(P))\quad\text{and}\quad
\frac{v_{0,1}(\en_{\rho_0,\sigma_0}(\varphi(Q)))}{v_{0,1}(\en_{\rho_0,\sigma_0}(\varphi(P)))}=\frac nm.
$$
This finishes the proof, since $\frac{|D_{\tau}^Q|}{|D_{\tau}^P|}= \frac{v_{0,1}(\en_{\rho,\sigma}(\varphi(Q)))}{v_{0,1}
(\en_{\rho,\sigma}(\varphi(P)))}$ by Proposition~\ref{proposition definition pi root} and $(\rho_0,\sigma_0)=(\rho,\sigma)$.
\end{proof}

In~\cite{X} the author chooses a generic element $\xi\in K$ and analyses the roots of $P_\xi=P+\xi$. Instead of speaking of a generic element $\xi$,
we will assume (adding eventually to $P$ an element $\xi\in K$) that for any $\pi$-root $\tau$ of $P$ with $\lambda_\tau=0$ we have
\begin{itemize}

\item[(1)] $f_{P,\tau}$ has no multiple roots.

\smallskip

\item[(2)] $f_{P,\tau}$ and $f_{Q,\tau}$ have no common roots (are coprime).
\end{itemize}
This is possible, since, by~\eqref{igualdad para f de P y tau}, in the case $\lambda_\tau=0$ adding $\xi$ to $P$ is the same as adding $\xi$ to the
univariate polynomial $f_{P,\tau}(z)$. We also can and will assume that $(0,0)\in\Supp(P)\cap \Supp(Q)$.

\begin{remark}\label{raiz con lambda negativo}
Assume that $\tau$ is a $\pi$-root of $P$ with $\lambda_\tau<0$. Then, by Proposition~\ref{factor genera pi raiz}, Remark~\ref{numero de raices
aproximadas no crece} and item~(1), we have $|D_{\tau}^P|=1$. Moreover, we also have $|D_{\tau}^Q|=0$. In fact, take $\alpha\in D_\tau^P$.
By Proposition~\ref{approximation with lambda equal to 0} there exists $j_1$ and a $\pi$-approximation $\tau_1$ of $\alpha$ up to $x^{j_1}$, such that
$\lambda_{\tau_1}=0$. By Remark~\ref{orden de los lambda} necessarily $j_1>j_0$, where $j_0$ is the order of $\tau$. Let $\lambda$ be the coefficient
of $\alpha$ at $x^{j_1}$. Then $\pi-\lambda|f_{P,\tau_1}$ and so, by item~(2), $\pi-\lambda\nmid f_{Q,\tau_1}$. If $\tau_1$ is not a $\pi$-root of
$Q$, then clearly $|D_{\tau}^Q|=0$. Otherwise, by Corollary~\ref{lambda tiene que ser raiz} applied to $\tau_1$ and $Q$, we also have
$|D_{\tau}^Q|=0$.
\end{remark}

\begin{remark}\label{raiz final tiene lambda positivo}
From the first assertion in the previous remark it follows that for any final $\pi$-root $\tau$ of $P$ we have $\lambda_\tau\ge 0$.
\end{remark}

\begin{notation}\label{delta de alpha}
Let $\alpha=\sum_j a_j x^j\in \mathcal{R}(P)$  and set
$\delta_\alpha\coloneqq \min\{\deg_x(\alpha - \beta) | \beta\in\mathcal{R}(Q)\}$.
\end{notation}

\begin{remark}\label{pi raiz final es pi raiz de Q}
The $\pi$-ap\-proxi\-mation of $\alpha$ up to $x^{\delta_\alpha}$ is also a $\pi$-root of $Q$.
\end{remark}

\begin{proposition}(\cite{X}*{Lemma~4.2})\label{raiz final de una raiz}
Set $\tau\coloneqq \sum_{j>\delta}a_j x^j+\pi x^{\delta_\alpha}$. Then $\tau$ is a final $\pi$-root of~$P$.
\end{proposition}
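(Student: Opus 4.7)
The plan is to verify the three defining conditions of a final $\pi$-root of $P$ for the given $\tau$: (i) $\tau$ is a $\pi$-root of $P$, (ii) $f_{P,\tau}$ has no multiple roots, and (iii) $\deg_\pi(f_{P,\tau}) > 1$. Condition~(i) is immediate: by construction $\tau$ approximates $\alpha$ up to $x^{\delta_\alpha}$, so $\alpha \in D_\tau^P$ and hence $\deg(f_{P,\tau}) = |D_\tau^P| \ge 1$ by~\eqref{grado de f es numero de elementos}. The engine driving the other two conditions is Remark~\ref{pi raiz final es pi raiz de Q}, which guarantees that the same $\tau$ is also a $\pi$-root of $Q$; this is what unlocks Propositions~\ref{multiple roots} and~\ref{common pi roots}.

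Before invoking those results, I would first establish that $\lambda_\tau \ge 0$. If $\lambda_\tau < 0$, Remark~\ref{raiz con lambda negativo} would force $|D_\tau^Q|=0$, contradicting the fact that $\tau$ is a $\pi$-root of $Q$. With $\lambda_\tau \ge 0$ in hand, I would split into two cases to prove (ii). When $\lambda_\tau = 0$, the assertion is part of the blanket assumption~(1) imposed on all $\pi$-roots with $\lambda_\tau = 0$. When $\lambda_\tau > 0$, a multiple root of $f_{P,\tau}$ would, by Proposition~\ref{multiple roots}(1), force $[\ell_{\rho,\sigma}(\varphi(P)),\ell_{\rho,\sigma}(\varphi(Q))]=0$, and then Proposition~\ref{multiple roots}(2) would provide $\beta \in \mathcal{R}(Q)$ with $\deg_x(\alpha - \beta) < j_0 = \delta_\alpha$, contradicting the very definition of $\delta_\alpha$ as the minimum of such degrees.

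For condition~(iii), since $\tau$ is a common $\pi$-root of $P$ and $Q$ and $\lambda_\tau \ge 0$, Proposition~\ref{common pi roots} applies and yields $|D_\tau^Q|/|D_\tau^P| = n/m$. As $\gcd(m,n)=1$, this forces $m \mid |D_\tau^P|$, and since $m > 1$ we obtain $\deg_\pi(f_{P,\tau}) = |D_\tau^P| \ge m > 1$, as required.

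The main technical obstacle is the $\lambda_\tau > 0$ subcase of (ii), where a potential multiple root of $f_{P,\tau}$ must be converted into a violation of the minimality of $\delta_\alpha$ via the geometric content of Proposition~\ref{multiple roots}; everything else reduces to bookkeeping about the sign of $\lambda_\tau$ and the coprimality of $m$ and $n$.
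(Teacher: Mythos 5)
Your proposal is correct and follows essentially the same route as the paper's proof: $\lambda_\tau\ge 0$ via Remark~\ref{raiz con lambda negativo} (combined with Remark~\ref{pi raiz final es pi raiz de Q}), absence of multiple roots via the genericity assumption when $\lambda_\tau=0$ and via Proposition~\ref{multiple roots} together with the minimality of $\delta_\alpha$ when $\lambda_\tau>0$, and $\deg(f_{P,\tau})>1$ from $m\mid |D_\tau^P|$ given by Proposition~\ref{common pi roots}. You merely make explicit a couple of steps the paper leaves implicit (the $|D_\tau^Q|=0$ contradiction and the coprimality of $m,n$), which is fine.
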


\begin{proof}
Since clearly $\tau$ is a $\pi$-root of $P$, we only must prove that $\tau$ is
a final $\pi$-root of $P$, i.e, that $\deg(f_{P,\tau})>1$ and that $f_{P,\tau}$ has no multiple roots. By Remark~\ref{raiz con lambda negativo} we
know that $\lambda_\tau\ge 0$. By item~(1) above the same remark we also know that when $\lambda_\tau=0$, the polynomial $f_{P,\tau}$ has no multiple
roots. If $\lambda_\tau>0$, then $f_{P,\tau}$ also does not have no multiple roots. In fact, otherwise by Proposition~\ref{multiple roots} there
exists $\beta\in\mathcal{R}(Q)$ such that $\deg_x(\alpha-\beta)<\delta_\alpha$, contradicting the definition of $\delta_\alpha$. Finally, by Proposition~\ref{common
pi roots} we know that $m$ divides $|D_\tau^P|=\deg(f_{P,\tau})$ and so $\deg(f_{P,\tau})>1$, which concludes the proof.
\end{proof}

\subsection[Major and minor final $\pi$-roots]{Major and minor final $\boldsymbol{\pi}$-roots}

\begin{definition}\label{major and minor roots}
A final $\pi$-root $\tau$ of $P$ is called a \emph{minor final $\pi$-root of $P$} if $\lambda_\tau=0$, and it is called a \emph{major final $\pi$-root of $P$} if $\lambda_\tau>0$. The set of minor final $\pi$-roots of $P$ is denoted by $P_m$ and the set of final major $\pi$-roots of $P$ is denoted by $P_M$.
\end{definition}
Note that
$$
\mathcal{R}(P)=\bigcup_{\tau\in P_m\cup P_M} D_\tau^P,
$$
since, by Proposition~\ref{raiz final de una raiz} every root $\alpha\in\mathcal{R}(P)$ is associated with a final $\pi$-root of $P$
(that we will call \emph{the final $\pi$-root of $P$ associated with $\alpha$}) and by
Remark~\ref{raiz final tiene lambda positivo} we know that $\lambda_\tau\ge 0$. Note also that
if $\tau\ne \tau_1$ are final $\pi$-roots, then $D^P_\tau\cap D^P_{\tau_1}=\emptyset$. In fact, assume by contradiction that $\alpha\in D^P_\tau\cap D^P_{\tau_1}$,
 and assume for example that $\delta_\tau<\delta_{\tau_1}$, which means that $\tau$ is a better approximation of $\alpha$. Then,
  since the multiplicity of
any factor of $f_{P,\tau_1}$ is one, by Remark~\ref{multiplicidad es cota para numero de raices mejor aproximadas} we have $|D_\tau^P|\le 1$,  which contradicts the fact that $\tau$ is a final $\pi$-root of $P$.

\begin{remark}\label{deltas son iguales}
Given a final $\pi$-root $\tau$ of $P$ take $\alpha\in D_\tau^P$. Then, by Proposition~\ref{raiz final de una raiz}, the $\pi$-ap\-proxi\-ma\-tion of $\alpha$ up to $x^{\delta_\alpha}$ is a final $\pi$-root, and, since $D^P_\tau\cap D^P_{\tau_1}=\emptyset$ for any other final $\pi$-root $\tau_1$ of $P$, necessarily
$\tau$ is the $\pi$-ap\-proxi\-ma\-tion of $\alpha$ up to $x^{\delta_\alpha}$, and so $\delta_\tau=\delta_\alpha$.
\end{remark}

\begin{proposition}
\label{raices menores y raices mayores}
Let $\tau$ be a final $\pi$-root of $P$, let $\varphi\coloneqq \varphi_\tau$ and set $\lambda_\tau^Q\coloneqq \deg_x(Q(\tau))$.
The following facts hold:
\begin{enumerate}
  \item If $\tau$ is a minor final $\pi$-root of $P$, then
  \begin{enumerate}
    \item[a)]$\lambda_\tau^Q=0$,
    \item[b)] $[\ell_{\rho,\sigma}(\varphi(Q)),\ell_{\rho,\sigma}(\varphi(P))]= 0$,
    \item[c)] $\delta_\tau<-1$.
  \end{enumerate}
  \item If $\tau$ is a major final $\pi$-root of $P$, then
\begin{enumerate}
\item[a)] $[\ell_{\rho,\sigma}(\varphi(Q)),\ell_{\rho,\sigma}(\varphi(P))]\ne 0$,
\item[b)] $\tau$ is a major final $\pi$ root of $Q$,
\item[c)] $\lambda_\tau^Q=\frac nm deg_x(P(\tau))$,
\item[d)] $\delta_\tau>-1$.
\end{enumerate}
\end{enumerate}
\end{proposition}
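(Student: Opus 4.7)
First I would establish the scalar identity $\lambda_\tau^Q = \frac{n}{m}\lambda_\tau$, which gives (1)(a) and (2)(c) as immediate specializations. For any final $\pi$-root $\tau$ of $P$, pick $\alpha \in D_\tau^P$; by Remark~\ref{deltas son iguales} the order of $\tau$ is $\delta_\alpha$, so Remark~\ref{pi raiz final es pi raiz de Q} shows $\tau$ is also a $\pi$-root of $Q$. Since $\lambda_\tau \ge 0$ by Remark~\ref{raiz final tiene lambda positivo}, Proposition~\ref{common pi roots} applies. Writing $\ell_{\rho,\sigma}(\varphi(P)) = x^{\lambda_\tau}f(z)$ and $\ell_{\rho,\sigma}(\varphi(Q)) = x^{\lambda_\tau^Q}g(z)$ with $f \coloneqq f_{P,\tau}$, $g \coloneqq f_{Q,\tau}$, and $z = x^{-\sigma/\rho}y$, one reads off the $x$-coordinate of the vector identity $\en_{\rho,\sigma}(\varphi(Q)) = \frac{n}{m}\en_{\rho,\sigma}(\varphi(P))$ (using $\deg g = \frac{n}{m}\deg f$) to extract $\lambda_\tau^Q = \frac{n}{m}\lambda_\tau$.

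Next I would compute the bracket of leading parts directly. Using $\partial_x z = -(\sigma/\rho)z/x$ and $\partial_y z = x^{-\sigma/\rho}$, the $zf'g'$ cross-terms cancel and one obtains
\[
[\ell_{\rho,\sigma}(\varphi(P)),\ell_{\rho,\sigma}(\varphi(Q))] = x^{\lambda_\tau + \lambda_\tau^Q - 1 - \sigma/\rho}\bigl(\lambda_\tau fg' - \lambda_\tau^Q gf'\bigr).
\]
In the minor case, $\lambda_\tau = 0 = \lambda_\tau^Q$ makes both summands vanish, yielding (1)(b). In the major case, substituting (2)(c) reduces (2)(a) to showing $mfg' \ne ngf'$: otherwise integration of $mfg' = ngf'$ gives $g^m = Cf^n$ for some $C \in K^\times$, and since $f$ is squarefree and $\gcd(m,n) = 1$ with $m > 1$, this would force $f$ to be constant, contradicting $\deg f > 1$.

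The remaining items follow from the Jacobian condition $[\varphi(P),\varphi(Q)] \in K^\times$, which gives $v_{\rho,\sigma}([\varphi(P),\varphi(Q)]) = 0$. Combined with \cite{GGV1}*{Proposition~1.13} in its iff form ($v_{\rho,\sigma}([A,B]) \le v_{\rho,\sigma}(A) + v_{\rho,\sigma}(B) - (\rho+\sigma)$, with equality iff the leading-part bracket is nonzero): in the minor case (1)(b) gives strict inequality, so $0 < -(\rho+\sigma)$ and $\delta_\tau = \sigma/\rho < -1$, proving (1)(c); in the major case (2)(a) gives equality, so $\rho + \sigma = \rho\lambda_\tau(m+n)/m > 0$ and $\delta_\tau > -1$, proving (2)(d). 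Finally (2)(b) is immediate: $\tau$ is a $\pi$-root of $Q$ with $\lambda_\tau^Q > 0$ and $\deg g \ge n > 1$, and if $g$ had multiple roots, Proposition~\ref{multiple roots}(1) applied to $Q$ via Remark~\ref{nm par} would make the leading-part bracket vanish, contradicting (2)(a). The hardest step will be the algebraic argument in (2)(a), ruling out $g^m = Cf^n$, which depends crucially on $f$ being squarefree and on $\gcd(m,n) = 1$ with both $m, n > 1$.
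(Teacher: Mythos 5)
Your proof is correct, and for most items it follows the paper's own route: you get $\lambda_\tau^Q=\frac nm\lambda_\tau$ from Proposition~\ref{common pi roots} (after observing via Remarks~\ref{pi raiz final es pi raiz de Q} and~\ref{deltas son iguales} that $\tau$ is a $\pi$-root of $Q$ and $\lambda_\tau\ge 0$), you obtain $\delta_\tau<-1$ and $\delta_\tau>-1$ from \cite{GGV1}*{Proposition~1.13} exactly as the paper does, and you get that $f_{Q,\tau}$ is squarefree from Proposition~\ref{multiple roots}(1) with $P$ and $Q$ interchanged. The genuine divergence is in (2)a) (and, more mildly, (1)b), which you verify by direct computation instead of citing \cite{GGV1}*{Proposition~2.1(1)}): the paper deduces (2)a) from Proposition~\ref{multiple roots}(2) together with the minimality defining $\delta_\tau=\delta_\alpha$, since a vanishing bracket would produce $\beta\in\mathcal{R}(Q)$ with $\deg_x(\alpha-\beta)<\delta_\alpha$; you instead compute $[\ell_{\rho,\sigma}(\varphi(P)),\ell_{\rho,\sigma}(\varphi(Q))]=x^{\lambda_\tau+\lambda_\tau^Q-1-\sigma/\rho}\bigl(\lambda_\tau fg'-\lambda_\tau^Q gf'\bigr)$ and rule out $mfg'=ngf'$ via $g^m=Cf^n$, using that $f$ is squarefree of degree $>1$ and $\gcd(m,n)=1$ with $m>1$. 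Both arguments are sound; the paper's is shorter given the machinery already available and makes the geometric content visible (a vanishing bracket forces a root of $Q$ strictly closer to $\alpha$ than $\delta_\alpha$), while yours is self-contained at the level of the univariate forms $f_{P,\tau}$, $f_{Q,\tau}$, at the cost of implicitly using $\operatorname{char}K=0$ in the integration step and of a small unstated divisibility ($m\mid\deg f_{P,\tau}$, which follows from $\frac nm\deg f_{P,\tau}\in\mathds{Z}$ and $\gcd(m,n)=1$) when you assert $\deg f_{Q,\tau}\ge n$ in (2)b).
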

\begin{proof}
By Remarks~\ref{pi raiz final es pi raiz de Q} and~\ref{deltas son iguales},
any final $\pi$-root $\tau$ of $P$ is also a $\pi$-root of $Q$. We will use this fact in the proofs of~(1)a) and~(2)b).

\noindent(1)\enspace  By
Proposition~\ref{common pi roots}, since $\lambda_\tau\ge 0$, we have
$m\en_{\rho,\sigma}(\varphi(Q))= n\en_{\rho,\sigma}(\varphi(P))$, and so
$$
\rho\lambda_\tau^Q=v_{\rho,\sigma}(\varphi(Q))=\frac nm v_{\rho,\sigma}(\varphi(P))=\frac nm \rho \lambda_\tau^P=0,
$$
where the first and third equality follow from~\eqref{igualdad para f de P y tau}. This implies that $\lambda_\tau^Q=\deg_x(Q(\tau))=0$, proving~a).
Moreover, by~\cite{GGV1}*{Proposition~2.1(1)} the vanishing of $v_{\rho,\sigma}(\varphi(Q))$ and $v_{\rho,\sigma}(\varphi(P))$ implies that
$[\ell_{\rho,\sigma}(\varphi(Q)),\ell_{\rho,\sigma}(\varphi(P))]= 0$, proving item~b). Now assume by contradiction that
$\frac{\sigma}{\rho}=\delta_\tau\ge -1$, which implies that $\rho+\sigma\ge 0$. Then, by~\cite{GGV1}*{Proposition~1.13}, we have
$$
0=v_{\rho,\sigma}([\varphi(P),\varphi(Q)])\le v_{\rho,\sigma}(\varphi(Q))+v_{\rho,\sigma}(\varphi(P))-(\rho,\sigma)=-(\rho+\sigma)\le 0,
$$
so we have equality and, again by~\cite{GGV1}*{Proposition~1.13}, we have $[\ell_{\rho,\sigma}(\varphi(Q)),\ell_{\rho,\sigma}(\varphi(P))]\ne 0$.
But this contradicts item~b) and thus proves $\delta_\tau<-1$, which is~c).

\noindent (2)\enspace By Remarks~\ref{pi raiz final es pi raiz de Q} and~\ref{deltas son iguales}, we know that $\tau$ is a $\pi$-root of $Q$
and, that for any $\alpha\in D_{\tau}^P$,
$$
\delta_\tau= \min\{\deg_x(\alpha - \beta) | \beta\in \mathcal{R}(Q)\}.
$$
Hence, by Proposition~\ref{multiple roots}(2), we have $[\ell_{\rho,\sigma}(\varphi(Q)),\ell_{\rho,\sigma}(\varphi(P))]\ne 0$, which proves~a). Moreover, by
Proposition~\ref{multiple roots}(1) with $Q$ and $P$ interchanged, $f_{Q,\tau}$ has no multiple roots. On the other hand, by
Proposition~\ref{common pi roots}, we have
$$
|D_{\tau}^Q|=\frac nm |D_\tau^P|>1,
$$
and so $\tau$ is a final $\pi$-root of $Q$. Again by Proposition~\ref{common pi roots} and equality~\eqref{igualdad para f de P y tau}, we have
$$
\rho \deg_x Q(\tau)=\rho \lambda_\tau^Q=v_{\rho,\sigma}(\varphi(Q))=\frac nm v_{\rho,\sigma}(\varphi(P))=\frac nm \rho \lambda_\tau^P
=\rho \frac nm \deg_x(P(\tau)),
$$
and so $\deg_x Q(\tau)=\frac nm \deg_x P(\tau))>0$, which finishes the proof of~b) and~c).
It remains to check that $\delta_\tau>-1$. Assume by contradiction that $\frac{\sigma}{\rho}=\delta_\tau\le -1$. Then $\rho+\sigma\le 0$, and so
$$
 v_{\rho,\sigma}(\varphi(Q))+v_{\rho,\sigma}(\varphi(P))-(\rho+\sigma)\ge \rho\lambda_\tau^P\left(1+\frac nm\right)>0=v_{\rho,\sigma}[\varphi(P),\varphi(Q)],
$$
which, by~\cite{GGV1}*{Proposition~1.13}, implies that $[\ell_{\rho,\sigma}(\varphi(Q)),\ell_{\rho,\sigma}(\varphi(P))]=0$. This contradicts
item~a) finishing the proof of item~d).
\end{proof}
\subsection{Intersection number and major roots}

\begin{lemma}\label{Q en alpha es Q en tau}
Let $\tau$ be a final $\pi$-root of $P$. Then $\lambda_\tau^Q\coloneqq\deg_x(Q(\tau))=\deg_x(Q(\alpha))$ for $\alpha\in D_\tau^P$.
\end{lemma}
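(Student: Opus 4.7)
The plan is to reduce the lemma to Proposition~\ref{P en tau coincide con P en alpha} applied with $P$ replaced by $Q$. Writing
$$
\alpha=\sum_{j>j_0}a_j x^j+\lambda x^{j_0}+\sum_{j<j_0}a_j x^j,
$$
where $j_0$ is the order of $\tau$ (so by Remark~\ref{deltas son iguales} we have $j_0=\delta_\tau=\delta_\alpha$), that proposition will yield $\deg_x(Q(\alpha))=\deg_x(Q(\tau))$ provided I can check that $f_{Q,\tau}(\lambda)\ne 0$. Note also that by Remark~\ref{pi raiz final es pi raiz de Q}, $\tau$ is a $\pi$-root of $Q$, so $f_{Q,\tau}$ is defined and Proposition~\ref{P en tau coincide con P en alpha} is legitimately applicable to $Q$.

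As a preliminary observation, $\lambda$ is a root of $f_{P,\tau}$: this is the contrapositive of Corollary~\ref{lambda tiene que ser raiz}, since $\alpha\in D_\tau^P$ satisfies $\deg_x\bigl(\alpha-(\lambda x^{j_0}+\sum_{j>j_0}a_j x^j)\bigr)<j_0$. I now split according to whether $\tau$ is minor or major.

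If $\tau$ is a minor final $\pi$-root of $P$, then $\lambda_\tau=0$ and the genericity hypothesis (item~(2) stated just before Remark~\ref{raiz con lambda negativo}) guarantees that $f_{P,\tau}$ and $f_{Q,\tau}$ share no common root; combined with the previous paragraph, this forces $f_{Q,\tau}(\lambda)\ne 0$. If instead $\tau$ is a major final $\pi$-root of $P$, I argue by contradiction: assuming $f_{Q,\tau}(\lambda)=0$, Proposition~\ref{factor genera pi raiz} applied to the pair $(Q,\tau)$ with the value $\lambda$ produces some $j_1<j_0$ and a $\pi$-root $\tau_1$ of $Q$ beginning with $\sum_{j>j_0}a_j x^j+\lambda x^{j_0}$, together with at least one $\beta\in D_{\tau_1}^Q\subseteq\mathcal{R}(Q)$. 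For such a $\beta$ we would have $\deg_x(\alpha-\beta)\le j_1<j_0=\delta_\alpha$, contradicting the definition of $\delta_\alpha$.

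Thus in both cases $f_{Q,\tau}(\lambda)\ne 0$, and Proposition~\ref{P en tau coincide con P en alpha} applied to $Q$ yields the claimed equality. The only delicate point is the major case, where one must verify that the $\beta$ produced by Proposition~\ref{factor genera pi raiz} genuinely violates the minimality of $\delta_\alpha$; but this is immediate since $\beta$ is designed, by construction of $\tau_1$, to agree with $\alpha$ in every coefficient of $x^j$ for $j\ge j_0$.
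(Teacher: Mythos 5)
Your proof is correct, and its overall skeleton is the same as the paper's: show that the coefficient $\lambda$ of $x^{j_0}$ in $\alpha$ is a root of $f_{P,\tau}$ (via Corollary~\ref{lambda tiene que ser raiz}), deduce $f_{Q,\tau}(\lambda)\ne 0$, and then invoke Proposition~\ref{P en tau coincide con P en alpha} for $Q$; the minor case is handled exactly as in the paper, through the genericity assumption on $\xi$. Where you genuinely diverge is the major case: the paper shows that $f_{P,\tau}$ and $f_{Q,\tau}$ have no common root at all, because a common linear factor $z-\lambda$ of $\ell_{\rho,\sigma}(\varphi(P))$ and $\ell_{\rho,\sigma}(\varphi(Q))$ would divide the bracket $[\ell_{\rho,\sigma}(\varphi(Q)),\ell_{\rho,\sigma}(\varphi(P))]$, which lies in $K^{\times}$ by Proposition~\ref{raices menores y raices mayores}(2)a); you instead argue by root proximity, using Proposition~\ref{factor genera pi raiz} for $Q$ (legitimate, since $\tau$ is a $\pi$-root of $Q$ by Remarks~\ref{pi raiz final es pi raiz de Q} and~\ref{deltas son iguales}) to manufacture $\beta\in\mathcal{R}(Q)$ agreeing with $\alpha$ in all exponents $\ge j_0$, contradicting $\delta_\alpha=\delta_\tau=j_0$. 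In effect you unwind the mechanism behind Proposition~\ref{raices menores y raices mayores}(2)a) back to the defining minimality of $\delta_\alpha$, which buys a more elementary argument that bypasses the Jacobian-bracket computation; it even applies verbatim in the minor case, so your case split (and the appeal to genericity here) is not strictly needed, whereas the paper's route records the stronger and later-used fact that $f_{P,\tau}$, $f_{Q,\tau}$ are coprime for major $\tau$. Two cosmetic points: your intermediate claim $\deg_x(\alpha-\beta)\le j_1$ is not justified (nothing forces the coefficients of $\alpha$ to vanish between $j_1$ and $j_0$), but all you need and all your final sentence actually proves is $\deg_x(\alpha-\beta)<j_0$; and, as in the paper's other uses of Proposition~\ref{factor genera pi raiz}, one should note that $\varphi_1(Q)$ is not a monomial, which is automatic here.
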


\begin{proof}
We assert that $f_{P,\tau}(z)$ and $f_{Q,\tau}(z)$ have no common roots. In fact, assume on the contrary that $z-\lambda$ is a common factor. If $\tau$ is a major final root, then
$$
z-\lambda\mid [\lambda_\tau f_{P,\tau}(z),\lambda_\tau^Q f_{Q,\tau}(z)]=[\ell_{\rho,\sigma}(\varphi(Q)),\ell_{\rho,\sigma}(\varphi(P))]\in K^{\times},
$$
a contradiction; whereas, if $\tau$ is a minor root, then the choice of $\xi$ guarantees that
$f_{P,\tau}$ and $f_{Q,\tau}$ have no common roots.

Note that if the coefficient of $x^{j_0}$ in $\alpha$ is $\lambda$, then $f_{P,\tau}(\lambda)=0$, since otherwise $\pi-\lambda$ does not divide $f_{P,\tau}(\pi)$ and Corollary~\ref{lambda tiene que ser raiz}
leads to a contradiction. Hence, by the assertion $f_{Q,\tau}(\lambda)\ne 0$, and so, by Proposition~\ref{P en tau coincide con P en alpha},
we have $\deg_x(Q(\tau))=\deg_x(Q(\alpha))$
as desired.
\end{proof}

\begin{theorem}\label{Interseccion con raices mayores}
Set $I_M=\sum_{\tau\in P_M}|D_\tau^{P_\xi}|\lambda_\tau^Q$. Then $I_M=I(P,Q)$.
\end{theorem}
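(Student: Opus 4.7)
The plan is to combine the classical product formula for the resultant with Lemma~\ref{Q en alpha es Q en tau}. Since $P$ is monic in $y$ and by the Newton--Puiseux theorem factors as $P=\prod_{\alpha\in\mathcal{R}(P)}(y-\alpha)$ in $K((x^{-1/l}))[y]$, the standard formula gives
$$
\Res_y(P,Q)=\prod_{\alpha\in\mathcal{R}(P)} Q(\alpha),
$$
and taking $\deg_x$ (which is a valuation, hence additive on products) yields
$$
I(P,Q)=\deg_x\Res_y(P,Q)=\sum_{\alpha\in\mathcal{R}(P)}\deg_x(Q(\alpha)),
$$
provided every factor is nonzero. The nonvanishing $Q(\alpha)\ne 0$ follows from the argument in the proof of Lemma~\ref{Q en alpha es Q en tau}: if $\lambda$ is the coefficient of $x^{j_0}$ in $\alpha$, then $f_{P,\tau}(\lambda)=0$, while coprimality of $f_{P,\tau}$ and $f_{Q,\tau}$ (guaranteed by the choice of $\xi$ for minor roots and by $[\ell_{\rho,\sigma}(\varphi(P)),\ell_{\rho,\sigma}(\varphi(Q))]\in K^{\times}$ for major roots) forces $f_{Q,\tau}(\lambda)\ne 0$.

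Next, I would use the partition established between Definition~\ref{major and minor roots} and Remark~\ref{deltas son iguales}:
$$
\mathcal{R}(P)=\bigsqcup_{\tau\in P_m\cup P_M}D_\tau^{P},
$$
(covering by Proposition~\ref{raiz final de una raiz}, disjointness by the argument recalled there). Plugging Lemma~\ref{Q en alpha es Q en tau}, which states $\deg_x(Q(\alpha))=\lambda_\tau^Q$ for every $\alpha\in D_\tau^P$, into the sum gives
$$
I(P,Q)=\sum_{\tau\in P_m\cup P_M}|D_\tau^{P}|\,\lambda_\tau^Q.
$$

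Finally, I would kill the minor contributions using Proposition~\ref{raices menores y raices mayores}(1)(a), which says that $\lambda_\tau^Q=0$ whenever $\tau$ is a minor final $\pi$-root of $P$. Hence all terms with $\tau\in P_m$ vanish, leaving
$$
I(P,Q)=\sum_{\tau\in P_M}|D_\tau^{P}|\,\lambda_\tau^Q = I_M,
$$
where we identify $D_\tau^{P_\xi}=D_\tau^{P}$ under the convention (introduced before Remark~\ref{raiz con lambda negativo}) of absorbing the generic element $\xi$ into $P$.

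There is no real obstacle here, since all the substantive content has already been packaged into Lemma~\ref{Q en alpha es Q en tau}, Proposition~\ref{raices menores y raices mayores}, and Proposition~\ref{raiz final de una raiz}; the only delicate point is the initial verification that the $\deg_x$ of the Puiseux-series product formula for the resultant is well defined (each $Q(\alpha)\ne 0$) and that the fractional exponents in $\frac{1}{l}\mathds{Z}$ sum to an integer---the latter is automatic once the sum equals $\deg_x\Res_y(P,Q)\in\mathds{Z}$.
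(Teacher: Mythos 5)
Your proposal is correct and follows essentially the same route as the paper's own proof: the product formula $\Res_y(P,Q)=\prod_{\alpha\in\mathcal{R}(P)}Q(\alpha)$, the partition of $\mathcal{R}(P)$ over the final $\pi$-roots, Lemma~\ref{Q en alpha es Q en tau} to identify each $\deg_x(Q(\alpha))$ with $\lambda_\tau^Q$, and the vanishing $\lambda_\tau^Q=0$ for $\tau\in P_m$. The only difference is that you make explicit the nonvanishing of each $Q(\alpha)$ and the integrality of the resulting sum, points the paper leaves implicit.
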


\begin{proof}
It is well known that $\Res_y(P,Q)=\prod_{\alpha\in \mathcal{R}(P)}Q(\alpha)$. Hence,
$$
I(P,Q)=\deg_x \prod_{\alpha\in \mathcal{R}(P)}Q(\alpha)=\sum_{\alpha\in \mathcal{R}(P)}\deg_x(Q(\alpha))=
\sum_{\tau\in P_m\cup P_M}\sum_{\alpha\in D_\tau^{P}}\deg_x(Q(\alpha)).
$$
By Lemma~\ref{Q en alpha es Q en tau},
$$
I(P,Q)=\sum_{\tau\in P_m\cup P_M}\sum_{\alpha\in D_\tau^{P}}\deg_x(Q(\alpha))=\sum_{\tau\in P_M}|D_\tau^{P_\xi}|\lambda_\tau^Q +\sum_{\tau\in
P_m}|D_\tau^{P_\xi}|\lambda_\tau^Q=\sum_{\tau\in P_M}|D_\tau^{P_\xi}|\lambda_\tau^Q,
$$
since $\lambda_\tau^Q=0$ if $\tau\in P_m$.
\end{proof}

\begin{definition}
A root $\alpha\in \mathcal{R}(P)$ is called a \emph{minor root}, if the associated final $\pi$-root $\tau$ is a minor final $\pi$-root, and
it is called a \emph{major root}, if $\tau$ is a major final $\pi$-root.
\end{definition}

\begin{proposition}
Let $\tau$ be an approximate $\pi$-root of $P$ of order $j_0\le 0$ with $\lambda_\tau\ge 0$ and let $(\rho,\sigma)\coloneqq\dir(j_0)$. If
$v_{1,-1}(\en_{\rho,\sigma}(\varphi_\tau(P)))>0$,
then any root $\alpha\in D_\tau^P$ is a minor root.
\end{proposition}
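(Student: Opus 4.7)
The plan is to argue by contradiction. Assume that some root $\alpha\in D_\tau^P$ is major, and let $\tau'$ be the final $\pi$-root of $P$ associated with $\alpha$ (so of order $\delta_\alpha$); set $(\rho'',\sigma'')\coloneqq\dir(\delta_\alpha)$. By Proposition~\ref{raices menores y raices mayores}(2) we have $\delta_\alpha>-1$, the $\pi$-root $\tau'$ is a major final $\pi$-root of $Q$ as well, and $[\ell_{\rho'',\sigma''}(\varphi_{\tau'}(P)),\ell_{\rho'',\sigma''}(\varphi_{\tau'}(Q))]\neq 0$. Since $[\varphi_{\tau'}(P),\varphi_{\tau'}(Q)]=[P,Q]\in K^\times$ has $v_{\rho'',\sigma''}=0$, the non-vanishing of this bracket, combined with the proportionality $v_{\rho'',\sigma''}(\varphi_{\tau'}(Q))=(n/m)\,v_{\rho'',\sigma''}(\varphi_{\tau'}(P))$ granted by Proposition~\ref{common pi roots}, forces, via \cite{GGV1}*{Proposition~1.13}, the explicit identity $\lambda_{\tau'}=m(1+\delta_\alpha)/(m+n)$.

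The central step is to compute $\lambda_\tau$ from the Puiseux factorization $\varphi_\tau(P)=\prod_i(y-\hat\alpha_i)$, where $\hat\alpha_i\coloneqq\alpha_i-T$ and $T\coloneqq\sum_{j>j_0}a_jx^j$. Since $v_{\rho,\sigma}$ is additive on products one has $\rho\lambda_\tau=\sum_i\max(\sigma,\rho\deg_x\hat\alpha_i)$; using that $\alpha_i\in D_\tau^P$ iff $\deg_x\hat\alpha_i\le j_0$, and that $\deg_x\hat\alpha_i=\deg_x(\alpha_i-\alpha)$ for $\alpha_i\notin D_\tau^P$ (because $\alpha-T$ has $\deg_x\le j_0<\deg_x(\alpha_i-\alpha)$, so no cancellation occurs), one obtains $\lambda_\tau=d\,j_0+\sum_{\alpha_i\notin D_\tau^P}\deg_x(\alpha_i-\alpha)$ with $d\coloneqq|D_\tau^P|$, and analogously $\lambda_{\tau'}=d'\,\delta_\alpha+\sum_{\alpha_i\notin D_{\tau'}^P}\deg_x(\alpha_i-\alpha)$ with $d'\coloneqq|D_{\tau'}^P|$. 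Comparing these two identities through the symmetric difference $D_\tau^P\triangle D_{\tau'}^P$---whose elements $\alpha_i$ satisfy $\deg_x(\alpha_i-\alpha)\in(\min(j_0,\delta_\alpha),\max(j_0,\delta_\alpha)]$---and bounding each bridge term by the upper endpoint of this interval yields the uniform inequality $\lambda_\tau\le\lambda_{\tau'}+d(j_0-\delta_\alpha)$, valid whether $\delta_\alpha\le j_0$ or $\delta_\alpha>j_0$.

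To conclude, since $d\ge 1>m/(m+n)$ and $1+\delta_\alpha>0$, the bound rewrites as $\lambda_\tau\le m(1+\delta_\alpha)/(m+n)+d(j_0-\delta_\alpha)<d(1+\delta_\alpha)+d(j_0-\delta_\alpha)=d(1+j_0)$. On the other hand, Proposition~\ref{proposition definition pi root} gives $\en_{\rho,\sigma}(\varphi_\tau(P))=(\lambda_\tau-d\,j_0,d)$, so the hypothesis $v_{1,-1}(\en_{\rho,\sigma}(\varphi_\tau(P)))>0$ is precisely $\lambda_\tau>d(1+j_0)$, contradicting the previous inequality. Hence $\alpha$ must be a minor root.

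The main obstacle is the uniform upper bound $\lambda_\tau\le\lambda_{\tau'}+d(j_0-\delta_\alpha)$: the inclusion between $D_\tau^P$ and $D_{\tau'}^P$ reverses with the sign of $j_0-\delta_\alpha$, yet careful bookkeeping of the bridge terms $\deg_x(\alpha_i-\alpha)$ keeps the inequality going in the same direction in either case, because each such term is bounded above by $\max(j_0,\delta_\alpha)$ and below by $\min(j_0,\delta_\alpha)$.
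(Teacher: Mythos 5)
Your argument is correct, but it is a genuinely different proof from the one in the paper. The paper reduces the claim to showing $\delta_\alpha<-1$ (via Proposition~\ref{raices menores y raices mayores}) and then runs a finite descent: it invokes Propositions~\ref{jacobiano se anula}, \ref{direcciones positivas}, \ref{multiple roots} and~\ref{factor genera pi raiz} to show that at each stage the leading form is a power of one linear factor, passes to a strictly smaller order in $\frac1l\mathds{Z}$, and iterates until the order drops below $-1$; the hypotheses $j_0\le 0$ and $\lambda_\tau\ge 0$ are exactly what is needed to start and sustain that machinery. You instead argue by contradiction with a direct quantitative comparison: from the Puiseux factorization you get $\lambda_{\tau_j}=\sum_i\max\bigl(j,\deg_x(\alpha_i-\alpha)\bigr)$ for the approximations $\tau_j$ of a fixed $\alpha$, which gives the uniform bound $\lambda_\tau\le\lambda_{\tau'}+d(j_0-\delta_\alpha)$ (this is just convexity of $j\mapsto\sum_i\max(j,\deg_x(\alpha_i-\alpha))$, whose slope left of $j_0$ is at most $d$); the Jacobian condition pins down $\lambda_{\tau'}=m(1+\delta_\alpha)/(m+n)$ for a major final root via the equality case of \cite{GGV1}*{Proposition~1.13} together with Proposition~\ref{common pi roots} (or \ref{raices menores y raices mayores}(2)c)); and $\en_{\rho,\sigma}(\varphi_\tau(P))=(\lambda_\tau-dj_0,d)$ turns the hypothesis into $\lambda_\tau>d(1+j_0)$, contradicting $\lambda_\tau<d(1+j_0)$. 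I checked the bridge estimate in both cases $\delta_\alpha\le j_0$ and $\delta_\alpha>j_0$ and it holds; note only that your phrase ``bounding each bridge term by the upper endpoint'' is literally right only when $\delta_\alpha>j_0$ --- when $\delta_\alpha<j_0$ the bridge sum enters with a minus sign and must be bounded \emph{below} by $\delta_\alpha$ --- but your closing paragraph states the correct two-sided bookkeeping, so this is a wording slip, not a gap. What your route buys: no descent, no appeal to Propositions~\ref{jacobiano se anula} and~\ref{direcciones positivas}, and in fact no use of the hypotheses $j_0\le 0$ and $\lambda_\tau\ge 0$, so you prove the formally stronger statement that $v_{1,-1}(\en_{\rho,\sigma}(\varphi_\tau(P)))>0$ alone forces every root in $D_\tau^P$ to be minor (indeed your computation shows $v_{1,-1}(\en_{\rho,\sigma}(\varphi_\tau(P)))\le(1+\delta_\alpha)\bigl(\tfrac{m}{m+n}-d\bigr)<0$ whenever $D_\tau^P$ contains a major root). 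What the paper's route buys is structural information along the way (each intermediate leading form is a power of a single linear factor), which is in the spirit of the rest of the section.
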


\begin{proof}
The hypotheses guarantee  that $(\varphi_\tau(P),\varphi_\tau(Q))$ and $(\rho,\sigma)$
satisfy the hypotheses of Proposition~\ref{jacobiano se anula} (for instance $(\rho,\sigma)\in\ ](0,-1),(1,0)]$, because $j_0\le 0$).
If $v_{\rho,\sigma}(\varphi_\tau(P))=\rho \lambda_\tau=0$, then $\tau$ is a minor final $\pi$-root and the result is true.
Else $v_{\rho,\sigma}(\varphi_\tau(P))=\rho \lambda_\tau>0$, since $\lambda_\tau\ge 0$. Take $\alpha\in D_\tau^P$.  By
Proposition~\ref{raices menores y raices mayores} it suffices to prove that $\delta_\alpha<-1$.
By Propositions~\ref{jacobiano se anula} and~\ref{multiple roots} we have $\delta_\alpha<\delta_\tau=j_0$, so the result is clear when $\delta_\tau\le -1$.
Assume that
$\delta_\tau > -1$. In this case $\rho+\sigma>0$, and using Proposition~\ref{direcciones positivas} and equality~\eqref{igualdad para f de P y tau} we
conclude that
$f_{P,\tau}(z)=\varsigma(z-\mu)^{mb}$ for some $\varsigma,\mu\in K^{\times}$, where
$b\coloneqq\frac 1m v_{0,1}(\en_{\rho,\sigma}(\varphi_\tau(P)))=\frac{|D_\tau^P|}{m}$ (see Proposition~\ref{proposition definition pi root}). Hence, by
Proposition~\ref{factor genera pi raiz} there exists $j_1<j_0$ such that
for the $\pi$-root
$$
\tau_1\coloneqq \sum_{j=1}^k a_jx^j+\mu x^{j_0}+\pi x^{j_1},
$$
we have $D_{\tau_1}^P=D_\tau^P$. If $j_1\le -1$, then we finish the proof immediately applying the above argument with $\tau$ replaced by $\tau_1$,
since $\lambda_{\tau_1}\ge 0$
(in fact, if $\lambda_{\tau_1}<0$, then by Remark~\ref{raiz con lambda negativo}, we have $|D_{\tau_1}^P|=1$, which is impossible because $bm=|D_\tau^P|$).
Assume now that $j_1 > -1$ and set $(\rho_1,\sigma_1)\coloneqq \dir(j_1)$.
By Proposition~\ref{direcciones positivas} we know that $\rho_1|l$, and so $j_1\in\frac 1l\mathds{Z}$. Hence,
if $j_0=-\frac kl$ for some $0\le k\le l$,  then $-j_1\in\{\frac{k+1}{l},\frac{k+2}{l},\dots,\frac{l-1}l,\frac{l}{l}\}$, so after repeating the same procedure a finite number $t$ of times, we arrive at $\delta_\alpha<j_t\le -1$, as desired.
\end{proof}

\begin{proposition}\label{top roots}
Let $a,b$ satisfying equalities~\eqref{esquina superior}. There exist $ma$ minor roots $\alpha$ of $P$ with $\deg_x(\alpha)=1$
and leading term $-x$, and $mb$ roots $\beta$ of $P$ with $\deg_x(\beta)\le 0$.
\end{proposition}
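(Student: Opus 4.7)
The plan is to apply Proposition~\ref{factor genera pi raiz} to the trivial $\pi$-approximation $\tau_0 := \pi x$ of order $j_0 = 1$, and then invoke the preceding proposition to obtain the minorness claim. Since $\varphi_{\tau_0}$ is the identity and by~\eqref{esquina superior} we have $\ell_{1,1}(P) = (x+y)^{ma}y^{mb} = x^{ma+mb}(1+z)^{ma}z^{mb}$ with $z := x^{-1}y$, Proposition~\ref{proposition definition pi root} yields $\lambda_{\tau_0} = ma+mb = \deg_y P$ and $f_{P,\tau_0}(\pi) = (1+\pi)^{ma}\pi^{mb}$, so the roots $\pi = -1$ (multiplicity $ma$) and $\pi = 0$ (multiplicity $mb$) exhaust the $\deg_y P$ Puiseux roots of $P$.

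For the $mb$ roots with $\deg_x \beta \le 0$, I apply Proposition~\ref{factor genera pi raiz} with $\lambda = 0$: since $\lambda = 0$ makes the intermediate automorphism the identity, the relevant direction is $\Pred_P(1,1)$, and the equality $\en_{1,0}(P) = (ma, mb) = \st_{1,1}(P)$ (from~\eqref{esquina superior uno cero} and the explicit form of $\ell_{1,1}(P)$) forces $\Pred_P(1, 1) = (1, 0)$; so the resulting $\pi$-root $\tau_2 := \pi$ has order $0$ and $|D_{\tau_2}^P| = mb$, giving the required $mb$ roots with $\deg_x \beta \le 0$. For the $ma$ roots with leading term $-x$, I apply Proposition~\ref{factor genera pi raiz} with $\lambda = -1$, producing a $\pi$-root $\tau_1 := -x + \pi x^{j_1}$ with $|D_{\tau_1}^P| = ma$, each $\alpha \in D_{\tau_1}^P$ satisfying $\deg_x \alpha = 1$ and having leading term $-x$.

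To see these $ma$ roots are minor, I apply the preceding proposition to $\tau_1$; the main obstacle is verifying its hypotheses, which hinges on controlling $\en_{1,0}(\varphi_{\tau_1}(P))$ and $\Pred_{\varphi_{\tau_1}(P)}(1,1)$. Using $P = \psi(P_0)$ with $\psi(x) = x + y$, one computes $\varphi_{\tau_1}(P)(x, y) = P(x, y - x) = P_0(y, y - x)$. The contributions to $x^{mb}$ in $\varphi_{\tau_1}(P) = \sum_{(i,j) \in \Supp(P_0)} c_{ij} y^i (y-x)^j$ come only from $(i, j) \in \Supp(P_0)$ with $j = mb$; by $\en_{1,0}(P_0) = (ma, mb)$ these all satisfy $i \le ma$, with $i = ma$ attained with nonzero coefficient; hence the rightmost-highest point of $\varphi_{\tau_1}(P)$ is $(mb, ma)$, so $\en_{1,0}(\varphi_{\tau_1}(P)) = (mb, ma)$ and $v_{1,-1}(\en_{1,0}(\varphi_{\tau_1}(P))) = mb - ma > 0$ thanks to $a < b$. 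Moreover, since the edge of the Newton polygon of $\varphi_{\tau_1}(P)$ yielding $\Pred_{\varphi_{\tau_1}(P)}(1,1)$ is adjacent to $(mb, ma)$, it is either vertical (giving $(\rho', \sigma') = (1, 0)$ and $j_1 = 0$) or drops toward $(x^*, y^*)$ with $x^* < mb$ and $y^* < ma$ (giving $(\rho', \sigma') \sim (ma - y^*, x^* - mb)$ with $\rho' > 0$, $\sigma' < 0$, hence $j_1 = \sigma'/\rho' < 0$); in either case $j_1 \le 0$. Finally, $\lambda_{\tau_1} \ge v_{1, j_1}(0, 0) = 0$ since $(0, 0) \in \Supp(\varphi_{\tau_1}(P))$ (inherited from $(0,0) \in \Supp(P_0)$), so all hypotheses of the preceding proposition are met, and each of the $ma$ roots in $D_{\tau_1}^P$ is minor.
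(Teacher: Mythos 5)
Your proposal follows essentially the same route as the paper: the identity $f_{P,\pi x}(\pi)=(1+\pi)^{ma}\pi^{mb}$ coming from~\eqref{esquina superior} via Proposition~\ref{proposition definition pi root}, Proposition~\ref{factor genera pi raiz} with $\lambda=-1$ to produce $\tau_1=-x+\pi x^{j_1}$ with $|D_{\tau_1}^P|=ma$, and the unlabelled minor-root criterion stated just before the proposition. Your explicit checks that $j_1\le 0$ and that $\lambda_{\tau_1}\ge 0$ (via $(0,0)\in\Supp(\varphi_{\tau_1}(P))$) are actually more careful than the paper's proof at those points. However, two steps are stated incorrectly. First, for the $mb$ roots you assert that $\en_{1,0}(P)=\st_{1,1}(P)=m(a,b)$ \emph{forces} $\Pred_P(1,1)=(1,0)$. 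It only forces that no direction of $\Dir(P)$ lies strictly between $(1,0)$ and $(1,1)$; the equality $\Pred_P(1,1)=(1,0)$ would in addition require $(1,0)\in\Dir(P)$, i.e.\ a nontrivial vertical right edge, which can fail: in Example~\ref{ejemplo 16} the point $m(a,b)=(4m,12m)$ is a vertex of the Newton polygon of $P_\xi$ (Figure~\ref{figura 2}) and the predecessor of $(1,1)$ is $(4,-1)$, so the $\pi$-root produced by Proposition~\ref{factor genera pi raiz} has order $-\frac14$, not $0$. Your conclusion ($mb$ roots with $\deg_x(\beta)\le 0$) survives, because the produced order is $\le 0$ in every case, but the clean route is the paper's: take $\tau_0=\pi x^0$ directly and read $|D_{\tau_0}^P|=v_{0,1}(\en_{1,0}(P))=mb$ from Proposition~\ref{proposition definition pi root}, with no appeal to $\Pred$ at all.

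Second, the minor-root criterion requires $v_{1,-1}\bigl(\en_{\rho_1,\sigma_1}(\varphi_{\tau_1}(P))\bigr)>0$ with $(\rho_1,\sigma_1)=\dir(j_1)$, whereas you verify positivity for $\en_{1,0}(\varphi_{\tau_1}(P))$. When $j_1<0$ these are different directions, so you still owe the identification $\en_{\rho_1,\sigma_1}(\varphi_{\tau_1}(P))=m(b,a)$. This does follow from your own adjacency analysis (the $\Pred$-edge has $m(b,a)$ as its upper endpoint, and $(\rho_1,\sigma_1)$ is precisely that predecessor direction), and it is exactly what the paper gets in one line from~\eqref{direccion en el intervalo} together with $\en_{\rho_1,\sigma_1}(\varphi_{\tau_1}(P))=\st_{1,1}(\varphi_{\tau_1}(P))=m(b,a)$, using that $\varphi_{\tau_1}$ is $(1,1)$-homogeneous and~\eqref{esquina superior}; but as written the hypothesis you check is not the one the criterion asks for, so state the identification explicitly. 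With these two repairs your argument is correct and, apart from the detour through $\lambda=0$ and the hands-on computation of $\Supp(P_0(y,y-x))$ in place of the $(1,1)$-homogeneity argument, coincides with the paper's proof.
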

\begin{proof}
Take $\tau_0\coloneqq\pi x^0$. Then $j_0=0$, $\dir(j_0)=(1,0)$ and $\varphi_{\tau_0}=\ide$. By the first equality
in~\eqref{esquina superior uno cero}, we have
 $$
 \en_{\rho,\sigma}(\varphi_{\tau_0}(P))=\en_{1,0}(P)=m(a,b),
 $$
 and by Proposition~\ref{proposition definition pi root}, we have $|D_{\tau_0}^P|=mb$.
Since $\deg_x(\beta)\le j_0=0$ for all $\beta\in D_{\tau_0}^P$,
this yields $mb$ roots with
$\deg_x(\beta)\le 0$. On the other hand, by Proposition~\ref{factor genera pi raiz} with $\tau=\pi x$, $\lambda=-1$ and $\varphi_1(y)=y-x$,  there exists $j_1<1$ such that the
 $\pi$-root $\tau_1\coloneqq -x+\pi x^{j_1}$ satisfies $|D_{\tau_1}^P|=ma$, since $f_{P,\tau}(z)=(z+1)^{ma}z^{mb}$, and so the multiplicity of $\lambda=-1$ is $ma$.
 Moreover, by~\eqref{direccion en el intervalo} and the first equality in~\eqref{esquina superior},
 $$
 \en_{\rho_1,\sigma_1}(\varphi_1(P))=\st_{1,1}(\varphi_1(P))=m(b,a),
 $$
and then $v_{1,-1}(\en_{\rho_1,\sigma_1}(\varphi_1(P)))>0$. So,
 every root $\alpha\in D_{\tau_1}^P$ is a minor root.
\end{proof}

\begin{definition}
Following~\cite{X}, the minor roots in Proposition~\ref{top roots} are called \emph{top minor roots}.
\end{definition}

\begin{proposition}
Let $\alpha\in \mathcal{R}(P)$ be a major root, let $\tau$ be the associated (major) final $\pi$-root and let
$(\rho,\sigma)\coloneqq \dir(\delta_\alpha)$. Then $\left(\frac 1m \en_{\rho,\sigma}(\varphi_\tau(P)),(\rho,\sigma)\right)$ is a regular
corner of type I of $(\varphi_\tau(P),\varphi_\tau(Q))$ (see~\cite{GGV1}*{Definition~5.5} and the discussion
above~\cite{GGV1}*{Remark~5.9}).
\end{proposition}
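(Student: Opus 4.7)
The plan is to verify that the hypotheses opening Section~\ref{lower side corners} hold for the pair $(\varphi_\tau(P),\varphi_\tau(Q))$ at $(\rho,\sigma)\coloneqq\dir(\delta_\alpha)$, so that Propositions~\ref{jacobiano se anula} and~\ref{direcciones positivas}, together with the discussion introducing $R$ and $G$, apply and produce the structural data required by~\cite{GGV1}*{Definition~5.5} for a regular corner of type~I.

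Since $\varphi_\tau$ is an automorphism of $L^{(l)}$, $(\varphi_\tau(P),\varphi_\tau(Q))$ is still an $(m,n)$-pair. By Remark~\ref{deltas son iguales} the order of $\tau$ equals $\delta_\alpha$, and Proposition~\ref{raices menores y raices mayores}(2)d) gives $\delta_\alpha>-1$. Moreover $\delta_\alpha<1$: otherwise $\tau=\pi x$ and, since $\ell_{1,1}(P)=(x+y)^{ma}y^{mb}$ by~\eqref{esquina superior}, one computes $f_{P,\tau}(\pi)=(1+\pi)^{ma}\pi^{mb}$, which has multiple roots and contradicts the finality of $\tau$. Hence $(\rho,\sigma)\in\ ](1,-1),(1,1)[\subset\ ](0,-1),(1,1)]$ and $\rho+\sigma>0$. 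Since $\tau$ is major, $\lambda_\tau>0$, so Proposition~\ref{common pi roots} yields $\en_{\rho,\sigma}(\varphi_\tau(Q))=\frac{n}{m}\en_{\rho,\sigma}(\varphi_\tau(P))$; write the common value $\frac{1}{m}\en_{\rho,\sigma}(\varphi_\tau(P))\eqqcolon(a'/l,b')$. By~\eqref{igualdad para f de P y tau}, $u_p\coloneqq v_{\rho,\sigma}(\varphi_\tau(P))=\rho\lambda_\tau>0$, and by Proposition~\ref{proposition definition pi root}, $mb'=|D_\tau^P|=\deg(f_{P,\tau})>1$, so $b'\ge1$.

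The main obstacle is the strict inequality $a'/l>b'$, which I would establish by a geometric argument on the support of $\varphi_\tau(P)$. Since $\varphi_\tau(P)$ remains monic of degree $m(a+b)$ in~$y$, the point $(0,m(a+b))$ lies in $\Supp(\varphi_\tau(P))$, and the $(\rho,\sigma)$-maximality of the endpoint combined with $\sigma<\rho$ forces $(0,m(a+b))$ to be strictly below the endpoint; combining this with $u_p>0$ and the location of the $(1,0)$-endpoint of~$P$ given by~\eqref{esquina superior uno cero} extracts $a'/l>b'$.

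With all Section~\ref{lower side corners} hypotheses in place, Proposition~\ref{jacobiano se anula} gives $[\ell_{\rho,\sigma}(\varphi_\tau(P)),\ell_{\rho,\sigma}(\varphi_\tau(Q))]=0$, and Proposition~\ref{direcciones positivas} (applicable since $\rho+\sigma>0$) produces $\ell_{\rho,\sigma}(\varphi_\tau(P))=\lambda x^{u_p/\rho}(z-\mu)^{mb'}$ for some $\lambda,\mu\in K^{\times}$. The subsequent discussion then provides $(\rho,\sigma)$-homogeneous elements $R,G\in L^{(l)}$ and $i\in\mathds{N}$ with $[G,R]=R^i$ and $\ell_{\rho,\sigma}(\varphi_\tau(P))=\lambda R^m$. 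These are exactly the ingredients required by~\cite{GGV1}*{Definition~5.5} for $\bigl(\frac{1}{m}\en_{\rho,\sigma}(\varphi_\tau(P)),(\rho,\sigma)\bigr)$ to be a regular corner of type~I of $(\varphi_\tau(P),\varphi_\tau(Q))$.
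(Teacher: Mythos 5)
Your route is not viable, and the decisive problem is the type classification itself. In the usage of \cite{GGV1}*{Definition~5.5} throughout this paper, a regular corner is of type~I precisely when $[\ell_{\rho,\sigma}(\varphi_\tau(P)),\ell_{\rho,\sigma}(\varphi_\tau(Q))]\ne 0$ (compare the proof of Proposition~\ref{traduccion}(2), where being of type~Ib is what yields the nonvanishing of the bracket, and Example~\ref{ejemplo 16}, where a vanishing bracket goes with type~IIb); and for a major final $\pi$-root this nonvanishing is exactly Proposition~\ref{raices menores y raices mayores}(2)a). Your argument instead feeds $(\varphi_\tau(P),\varphi_\tau(Q))$ into the Section~\ref{lower side corners} machinery and concludes via Proposition~\ref{jacobiano se anula} that the bracket \emph{vanishes}; even if every intermediate step were correct, that would exhibit a corner of type~II, not type~I, so the proof establishes the wrong conclusion.

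In fact the intermediate steps cannot all be made to work: the inequality $a'/l>b'$, which you flag as the ``main obstacle'' and leave to an unproved geometric sketch, is false in this situation. Since you do verify the remaining hypotheses of Section~\ref{lower side corners} ($u_p=\rho\lambda_\tau>0$, the proportionality of endpoints from Proposition~\ref{common pi roots}, $b'\ge 1$, and $(\rho,\sigma)\in\,](0,-1),(1,1)]$ because $\delta_\tau>-1$), adding $a'/l>b'$ would let Proposition~\ref{jacobiano se anula} force $[\ell_{\rho,\sigma}(\varphi_\tau(P)),\ell_{\rho,\sigma}(\varphi_\tau(Q))]=0$, contradicting Proposition~\ref{raices menores y raices mayores}(2)a); moreover Proposition~\ref{direcciones positivas} would then give $\ell_{\rho,\sigma}(\varphi_\tau(P))=\lambda x^{u_p/\rho}(z-\mu)^{mb'}$, i.e.\ $f_{P,\tau}$ would have a root of multiplicity $mb'>1$, contradicting the finality of $\tau$. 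So the corner associated with a major final $\pi$-root does not satisfy the lower-side hypotheses, and no argument through Propositions~\ref{jacobiano se anula} and~\ref{direcciones positivas} can succeed. The intended proof is much more direct: one checks the items of \cite{GGV1}*{Definition~5.5} one by one --- item~(1) from the definition of $\pi$-root together with \cite{GGV1}*{Theorem~2.6(4)}, item~(2) from Remark~\ref{rho sigma esta en dir de P} (since $\tau$ is final, $(\rho,\sigma)\in\Dir(\varphi_\tau(P))$), item~(3) by hypothesis --- and then invokes Proposition~\ref{raices menores y raices mayores}(2)a) to conclude that the corner is of type~I.
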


\begin{proof}
Item~(3) of~\cite{GGV1}*{Definition~5.5} holds by hypothesis, item~(1) holds by the very definition of $\pi$-root, Proposition~\ref{raices menores}
and~\cite{GGV1}*{Theorem 2.6(4)}, and item~(2) holds by Remark~\ref{rho sigma esta en dir de P}. Moreover,
Proposition~\ref{raices menores y raices mayores}(2)a) proves that $\left(\frac 1m \en_{\rho,\sigma}(\varphi_\tau(P)),(\rho,\sigma)\right)$ is of type I.
\end{proof}

\begin{proposition}\label{traduccion}
Let $j_0<j_1<\dots < j_k\in\frac 1l \mathds{Z}$ and let $(\rho,\sigma)\coloneqq\dir(j_0)$.
Consider the automorphism $\varphi$ of $L^{(l)}$ defined by
$$
\varphi(x^{1/l})\coloneqq  x^{1/l}\quad\text{and}\quad \varphi(y)\coloneqq y+\sum_{i=1}^k a_i x^{j_i}.
$$
Let $A=((a/l,b),(\rho,\sigma))$ be a regular corner of
$(\varphi(P),\varphi(Q))$. The following facts hold:
\begin{enumerate}
\item $\tau\coloneqq \sum_{i=1}^k a_ix^{j_i}+\pi x^{j_0}$ is a $\pi$-root of $P$ and of $Q$.
          \item If $A$ is of type Ib, then $\tau$ is a final major $\pi$-root of $P$ and $Q$,
\begin{equation}\label{formulas traducidas}
|D_\tau^{P}|=mb\quad\text{and}\quad\quad |D_\tau^{Q}|=nb.
\end{equation}
Moreover, if $\st_{\rho,\sigma}(\varphi(Q))=(k/l,0)$ for some $1\le k<l-a/b$, then $\lambda_\tau^{Q}=\frac kl$.
\end{enumerate}
\end{proposition}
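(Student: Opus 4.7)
The plan is to read off the conclusion directly from the data carried by a regular corner and then match each conclusion to the relevant characterization already established in the excerpt. By \cite{GGV1}*{Definition~5.5}, the hypothesis that $A=((a/l,b),(\rho,\sigma))$ is a regular corner of $(\varphi(P),\varphi(Q))$ provides $\en_{\rho,\sigma}(\varphi(P)) = m(a/l,b)$, $\en_{\rho,\sigma}(\varphi(Q)) = n(a/l,b)$, the positivity $v_{\rho,\sigma}(\varphi(P))>0$, and, when the corner is of type I, the non-vanishing of $[\ell_{\rho,\sigma}(\varphi(P)),\ell_{\rho,\sigma}(\varphi(Q))]$. These four ingredients are what I would feed into Propositions~\ref{proposition definition pi root} and~\ref{multiple roots}.

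For part~(1), I would apply Proposition~\ref{proposition definition pi root} to $P$ and then to $Q$: equality~\eqref{grado de f es numero de elementos} gives $\deg(f_{P,\tau})=v_{0,1}(\en_{\rho,\sigma}(\varphi(P)))=mb>0$ and likewise $\deg(f_{Q,\tau})=nb>0$, which is exactly what is needed for $\tau$ to be a $\pi$-root of both polynomials. For part~(2), the major character is immediate from $\lambda_\tau=v_{\rho,\sigma}(\varphi(P))/\rho>0$ and, symmetrically, from $\lambda_\tau^Q>0$. That $\tau$ is final follows from Proposition~\ref{multiple roots}(1): the type-I condition rules out multiple roots of $f_{P,\tau}$ (otherwise the Jacobi bracket would vanish), and the same argument with $P$ and $Q$ interchanged rules out multiple roots of $f_{Q,\tau}$; the inequalities $\deg(f_{P,\tau})=mb>1$ and $\deg(f_{Q,\tau})=nb>1$ hold because $m,n>1$ and $b\ge 1$. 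The cardinalities $|D_\tau^P|=mb$ and $|D_\tau^Q|=nb$ in~\eqref{formulas traducidas} then drop out of~\eqref{grado de f es numero de elementos}.

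For the final assertion, I would invoke equality~\eqref{igualdad para f de P y tau} applied to $Q$: since $\ell_{\rho,\sigma}(\varphi(Q))=x^{\lambda_\tau^Q}f_{Q,\tau}(z)$ with $z=x^{-\sigma/\rho}y$, the hypothesis $\st_{\rho,\sigma}(\varphi(Q))=(k/l,0)$ forces the constant term of $f_{Q,\tau}$ to be non-zero and pins the $x$-component of the startpoint to $\lambda_\tau^Q=k/l$. The numerical bound $1\le k<l-a/b$ is the constraint placing the startpoint on the $x$-axis inside the geometry of a type-Ib corner, so it is already encoded in the hypothesis. The main obstacle is to confirm, by reading \cite{GGV1}*{Definition~5.5} and the surrounding discussion carefully, that the four ingredients I have isolated are indeed what the term \emph{regular corner of type Ib} packages; once that dictionary is set the proof is a direct consequence of the propositions already established in this section.
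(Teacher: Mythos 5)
Your proposal is correct and takes essentially the same route as the paper's proof: both rest on equalities~\eqref{grado de f es numero de elementos} and~\eqref{igualdad para f de P y tau} from Proposition~\ref{proposition definition pi root} to turn the corner data into the $\pi$-root statements and the counts $mb$, $nb$, on the type-Ib nonvanishing of the bracket fed into Proposition~\ref{multiple roots}(1) to exclude multiple roots, and on reading $\lambda_\tau^Q=k/l$ off the startpoint $(k/l,0)$ via~\eqref{igualdad para f de P y tau}. The only bookkeeping difference is that the paper gets $\en_{\rho,\sigma}(\varphi(Q))=n(a/l,b)$ from \cite{GGV1}*{Corollary~5.7} together with the fact that $(Q,P)$ is an $(n,m)$-pair rather than directly from \cite{GGV1}*{Definition~5.5}, and it cites Proposition~\ref{raices menores y raices mayores}(2)b) for $\tau$ being a final major $\pi$-root of $Q$ instead of re-running your symmetric multiple-root argument.
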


\begin{proof} (1)\enspace By items~(1) and~(3) of~\cite{GGV1}*{Definition~5.5}, we know that $A=\frac 1m \en_{\rho,\sigma}(\varphi(P))$ and
that $b\ge 1$. Hence, by equalities~\eqref{grado de f es numero de elementos} and~\eqref{igualdad para f de P y tau},
we conclude that $\deg(f_{P,\tau})>0$ and so $\tau$ is a $\pi$-root of $P$. Since by~\cite{GGV1}*{Corollary~5.7} and Remark~\ref{nm par}
the equality $A=\frac 1n \en_{\rho,\sigma}(\varphi(Q))$ holds, and $(Q,P)$ is an $(n,m)$-pair, $\tau$ is also a $\pi$-root of $Q$.

\smallskip

\noindent (2)\enspace The two expressions for $A$ obtained in the proof of item~(1), combined with the equality~\eqref{grado de f es numero de elementos} and the corresponding equality for $Q$, yield
the equalities in~\eqref{formulas traducidas}. Since $A$ is of type Ib,
$$
[\ell_{\rho,\sigma}(\varphi(P)),\ell_{\rho,\sigma}(\varphi(Q))]\ne 0,
$$
and so, by Proposition~\ref{multiple roots}(1), the polynomial $f_{P,\tau}$ has no multiple roots. Moreover, using again
equality~\eqref{grado de f es numero de elementos} and equality~\eqref{formulas traducidas} we obtain that $\deg(f_{P,\tau})=mb>1$.
This proves that $\tau$ is a major final $\pi$-root of $P$, and then, by Proposition~\ref{raices menores y raices mayores}(2)b), also of $Q$.
Finally, assuming that $\st_{\rho,\sigma}(\varphi(Q))=(k/l,0)$, equality~\eqref{igualdad para f de P y tau} for $Q$ implies that $\rho\lambda_\tau^{Q}=v_{\rho,\sigma}(\varphi(Q))=\rho\frac kl$, from which the last assertion follows, since $\rho\ne 0$.
\end{proof}

\begin{example}\label{ejemplo 16}
Consider the family $F_1$ of~\cite{GGHV}, corresponding to an $(m,n)$-pair $(P_0,Q_0)$ as in~\cite{GGV1}*{Corollary~5.21}:
\begin{equation}\label{datos de 16}
A_0=(4,12),\quad A_0'= (1,0),\quad A_1= (7/4,3),\quad k=1,\quad m= 2j+3\quad\text{and}\quad n= 3j+4.
\end{equation}
Then $(P_0,Q_0)$ has the shape given in Figure~\ref{figura 1}.
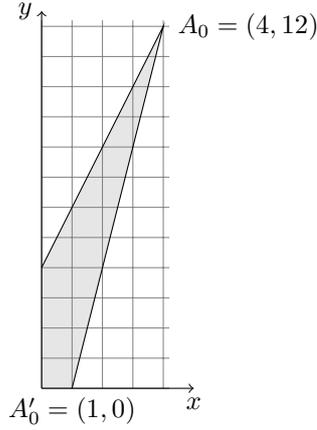
\begin{figure}[ht]
\centering
\begin{tikzpicture}[scale=0.4]
\fill[gray!20] (0,0) -- (1,0) -- (4,12)--(0,4);
\draw[step=1cm,gray,very thin] (0,0) grid (4.2,12.2);
\draw [->] (0,0) -- (5,0) node[anchor=north]{$x$};
\draw [->] (0,0) --  (0,12.5) node[anchor=east]{$y$};
\draw (1,0) node[anchor=north]{$A_0'=(1,0)$} --  (4,12) node[fill=white,right=2pt]{$A_0=(4,12)$} -- (0,4);
\end{tikzpicture}
\caption{Shape of $(P_0,Q_0)$}
\label{figura 1}
\end{figure}
In fact, by~\eqref{datos de 16}, the edge from $A_0$ to $A_0'$ is determined. So we only must prove that
$$
(\rho,\sigma)\coloneqq \Succ_{P_0}(1,0)=\Succ_{Q_0}(1,0)=(-2,1)
$$
 and that $\frac 1m\en_{-2,1}(P)=(0,4)$. By~\cite{GGV1}*{Corollary~5.21(4)} we know that $(-1,1)<(\rho,\sigma)<(-1,0)$. Moreover, by the second equality in~\cite{GGHV}*{(2.13)} we have
$$
q_0=\frac{v_{4,-1}(4,12)}{\gcd(v_{4,-1}(4,12),4-1)}=\frac{4}{\gcd(4,3)}=4.
$$
On the other hand, at the beginning of~\cite{GGV2}*{Subsection 2.4} we see that
$$
\en_{\rho,\sigma}(F_0) = \frac{p_0}{q_0}\frac{1}{m}\en_{\rho_0,\sigma_0}(P_0),
$$
and therefore, by~\cite{GGV1}*{Corollary~7.2}, there exists a $(\rho,\sigma)$-homogeneous element $R$ such that $\ell_{\rho,\sigma}(P)=R^{4m}$. This is only
possible if $(\rho,\sigma)=(-k,1)$ for some $k\in\mathds{N}$, with $k\ge 2$. But $k\ge 3$ leads to $v_{\rho,\sigma}(P_0)\le 0$ and then
$\deg_y(P_0(0,y))\le 0$,  which contradicts~\cite{vdE}*{Proposition~10.2.6}. So $k=2$ and hence
$$
\frac 1m\en_{-2,1}(P)=4\st_{-2,1}(R)=(0,4),
$$
as desired. Since $P\coloneqq \psi(P_0)$ and $Q\coloneqq
\psi(Q_0)$, where $\psi(y)=y$ and $\psi(x)=x+y$ (see the beginning of Subsection~\ref{subseccion 21}), the shape of $P$ is as in Figure~\ref{figura 2}, and $P$ is a monic polynomial in $y$ of degree $16m$.
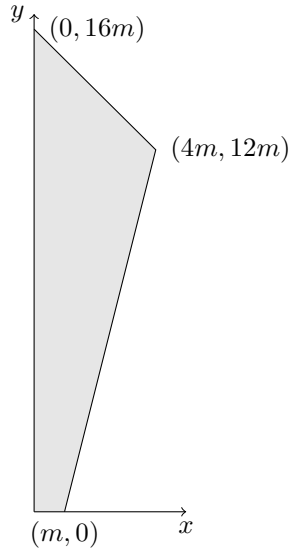
\begin{figure}[ht]
\centering
\begin{tikzpicture}[scale=0.4]
\fill[gray!20] (0,0) -- (1,0) -- (4,12)--(0,16);
%
\draw [->] (0,0) -- (5,0) node[anchor=north]{$x$};
\draw [->] (0,0) --  (0,16.5) node[anchor=east]{$y$};
\draw (1,0) node[anchor=north]{$(m,0)$} --  (4,12) node[fill=white,right=2pt]{$(4m,12m)$} -- (0,16)node[right=2pt]{$(0,16m)$};
\end{tikzpicture}
\caption{Shape of $P_\xi$}
\label{figura 2}
\end{figure}
Write $\ell_{4,-1}(P)=x^{m}g(z)^m$, where $z=x^{1/4}y$.
By~\cite{GGV2}*{Theorem~2.20(6)} and the fact that $v_{1,-1}(A_0')>0$, we know that $(A_0,(\rho,\sigma))=((4,12),(4,-1))$ is a regular corner of
type~IIb) of $(P,Q)$. Hence, by item~(8) of the same theorem, $v_{0,1}(A_1)=\frac{m_\lambda}{m}$, where $m_\lambda$ is the multiplicity of
$z-\lambda$ in $\mathfrak{p}_0(z)\coloneqq g(z)^m$.
Since $v_{0,1}(A_1)=3$, by~\cite{GGV2}*{Remarks~3.8 and~3.9} we have
$$
g(z)=\lambda_0 (z^4-\lambda_1^4)^{3},
$$
for some $\lambda_0,\lambda_1\in K^{\times}$. It follows that
$$
\ell_{4,-1}(P_\xi)=\lambda_0 x^{m}(z-\lambda_1)^{3m}(z-i\lambda_1)^{3m}(z+\lambda_1)^{3m}(z+i\lambda_1)^{3m},
$$
and so we have four final major $\pi$-roots
$$
\tau_0 \coloneqq \lambda_1 x^{1/4}+\pi x^{\delta},\quad
\tau_1  \coloneqq i\lambda_1 x^{1/4}+\pi x^{\delta},\quad
\tau_2  \coloneqq -\lambda_1 x^{1/4}+\pi x^{\delta} \quad\text{and}\quad
\tau_3  \coloneqq -i\lambda_1 x^{1/4}+\pi x^{\delta},
$$
where $\delta=\sigma/\rho$, with $(\rho,\sigma)\coloneqq \dir\left(m\left(\frac 74,3\right)-\left(\frac 34,1\right)\right)$. Here
$A_1=\left(\frac 74,3\right)$ is the same final corner (see~\cite{GGV2}*{Definition~2.18}) for all major final roots, corresponding to the regular
corner $(A_1,(\rho,\sigma))$  of type Ib) of each of the four
$(m,n)$-pairs $(\varphi_{\tau_j}(P),\varphi_{\tau_j}(Q))$. By the first equality in~\eqref{formulas traducidas}, there are $3m$ roots of $P$ associated to each of these major roots,
and by Proposition~\ref{top roots}, the remaining $4m$ roots of $P$ are minor roots. Now we compute
$$
I_M=\sum_{\tau\in P_M}|D_\tau^{P}|\lambda_\tau^Q=\sum_{j=0}^3|D_{\tau_j}^{P}|\lambda_{\tau_j}^Q=4\cdot mb\cdot \frac{k}{l} =4\cdot
m\cdot 3\cdot \frac 14=3m=3(2j+3).
$$
\end{example}

\newpage
\subsection{Intersection number and minor roots}
For the sake of brevity in the sequel we write $P_x$, $Q_x$, $P_y$ and $Q_y$ instead of the partial derivatives $\partial_x P$,
$\partial_x Q$, $\partial_y P$ and $\partial_y Q$, respectively.

\begin{lemma}
\label{lema auxiliar}
 Let $(P,Q)$ be as above, $(\rho,\sigma)$ be a direction with $\rho\ne 0$ and $\alpha\in\mathcal{R}(P)$.
Write $\ell_{\rho,\sigma}(P)= x^{u}g(z)$ with $z\coloneqq x^{-\sigma/\rho}y$.
The following facts hold:
\begin{enumerate}
  \item If $\deg(g)>0$, then $\ell_{\rho,\sigma}(P_y)= x^{u-\sigma/\rho}g'(z)$.
  \item $\alpha$ is a minor root if and only if $\deg_x(Q(\alpha))=0$.
  \item Let $\beta\in\mathcal{R}(P_y)$. There exists $\tau\in P_m$ such that $\beta\in D_\tau^{P_y}$ if and only if $\deg_x(P(\beta))=0$.
  \item If $\alpha$ is a minor root, then $\deg_x(P_y(\alpha))=-\delta_\alpha$.
  \item Let $\tau\in P_m$ and assume that $f_{P_y,\tau}$ and $f_{Q_y,\tau}$ are coprime. Then $\deg_x(Q_y(\beta))=-\delta_\tau$ for all
  $\beta\in D_\tau^{P_y}$.
  \item Let $\tau\in P_m$ and assume that $f_{P_y,\tau}$ and $f_{Q_y,\tau}$ are not coprime. Then there exists $\beta\in D_\tau^{P_y}$, such that $\deg_x(Q_y(\beta))<-\delta_\tau$.
\end{enumerate}
\end{lemma}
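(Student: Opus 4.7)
The plan is to handle the six parts in order, building on Part~(1), which is a direct computation: $\partial_y$ is $(\rho,\sigma)$-quasi-homogeneous of weight $-\sigma$, so $\ell_{\rho,\sigma}(P_y)=\partial_y\ell_{\rho,\sigma}(P)=x^{u-\sigma/\rho}g'(z)$, which is nonzero since $\deg g>0$ forces $g'\ne 0$ in characteristic zero. Part~(2) then follows from Lemma~\ref{Q en alpha es Q en tau} and Proposition~\ref{raices menores y raices mayores}: the associated final $\pi$-root $\tau$ of $\alpha$ satisfies $\deg_x Q(\alpha)=\lambda_\tau^Q$, and $\lambda_\tau^Q=0$ exactly when $\tau$ is minor, i.e., exactly when $\alpha$ is a minor root.

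For Part~(4), take $\tau$ to be the minor final $\pi$-root associated with $\alpha$: since $\lambda_\tau=0$, Part~(1) applied to $\varphi_\tau(P)$ gives $\ell_{\rho,\sigma}(\varphi_\tau(P_y))=x^{-\sigma/\rho}f_{P,\tau}'(z)$, which identifies $f_{P_y,\tau}=f_{P,\tau}'$ and $\lambda_\tau^{P_y}=-\sigma/\rho=-\delta_\alpha$. Simplicity of the roots of $f_{P,\tau}$ makes the leading coefficient of $\alpha$ at $x^{\delta_\alpha}$ a root of $f_{P,\tau}$ but not of $f_{P,\tau}'=f_{P_y,\tau}$, so Proposition~\ref{P en tau coincide con P en alpha} applied to $P_y$ yields $\deg_x P_y(\alpha)=\lambda_\tau^{P_y}=-\delta_\alpha$.

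Part~(3) is the crux. For $(\Rightarrow)$: if $\beta\in D_\tau^{P_y}$ with $\tau\in P_m$, Corollary~\ref{lambda tiene que ser raiz} forces the coefficient of $\beta$ at $x^{\delta_\tau}$ to be a root of $f_{P_y,\tau}$, hence (by simplicity and $f_{P_y,\tau}=f_{P,\tau}'$) not a root of $f_{P,\tau}$; Proposition~\ref{P en tau coincide con P en alpha} then yields $\deg_x P(\beta)=\lambda_\tau=0$. For $(\Leftarrow)$, set $j_\beta^-:=\min_{\alpha\in\mathcal R(P)}\deg_x(\alpha-\beta)$ and let $\tau^*$ be the $\pi$-approximation of $\beta$ up to $x^{j_\beta^-}$; this is a $\pi$-root of $P$, and minimality combined with Proposition~\ref{factor genera pi raiz} forces the coefficient of $\beta$ at $x^{j_\beta^-}$ not to be a root of $f_{P,\tau^*}$ (otherwise one could refine and find $\alpha\in\mathcal R(P)$ strictly closer to $\beta$ than $j_\beta^-$). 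Proposition~\ref{P en tau coincide con P en alpha} then gives $\lambda_{\tau^*}=\deg_x P(\beta)=0$; the genericity of $\xi$ yields simple roots of $f_{P,\tau^*}$; and since $\beta\in\mathcal R(P_y)$ is approximated by $\tau^*$, the polynomial $f_{P_y,\tau^*}$ (which coincides with $f_{P,\tau^*}'$ by~(1)) has positive degree, forcing $\deg f_{P,\tau^*}\ge 2$. Hence $\tau^*\in P_m$ with $\beta\in D_{\tau^*}^{P_y}$.

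Parts~(5) and~(6) follow by applying~(1) to $Q$: using $\lambda_\tau^Q=0$ from Proposition~\ref{raices menores y raices mayores}(1)a), we get $f_{Q_y,\tau}=f_{Q,\tau}'$ and $\lambda_\tau^{Q_y}=-\delta_\tau$. For~(5), coprimality means the coefficient of $\beta$ at $x^{\delta_\tau}$ is not a root of $f_{Q_y,\tau}$, so Proposition~\ref{P en tau coincide con P en alpha} applied to $Q_y$ yields $\deg_x Q_y(\beta)=\lambda_\tau^{Q_y}=-\delta_\tau$. For~(6), a common root $\lambda$ of $f_{P_y,\tau}$ and $f_{Q_y,\tau}$ produces (via Proposition~\ref{factor genera pi raiz} applied to $P_y$) a $\beta\in D_\tau^{P_y}$ with coefficient $\lambda$ at $x^{\delta_\tau}$; then $f_{Q_y,\tau}(\lambda)=0$ kills the $x^{\lambda_\tau^{Q_y}}$-term of $\ell_{\rho,\sigma}(\varphi_\tau(Q_y))$ upon the substitution $y=\lambda x^{\delta_\tau}$ (cf.\ Remark~\ref{borde superior no nulo}), forcing $\deg_x Q_y(\beta)<\lambda_\tau^{Q_y}=-\delta_\tau$. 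The main obstacle is the $(\Leftarrow)$ direction of~(3), which requires simultaneously a minimality argument (to ensure $\lambda_{\tau^*}=0$) and exploitation of $\beta\in\mathcal R(P_y)$ via the derivative identity $f_{P_y,\tau^*}=f_{P,\tau^*}'$ (to guarantee $\deg f_{P,\tau^*}>1$).
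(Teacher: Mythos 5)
Your proposal is correct and follows essentially the same route as the paper: homogeneity of $\partial_y$ for (1), Lemma~\ref{Q en alpha es Q en tau} plus Proposition~\ref{raices menores y raices mayores} for (2), the derivative identity $f_{P_y,\tau}=f'_{P,\tau}$ together with Propositions~\ref{factor genera pi raiz}, \ref{P en tau coincide con P en alpha} and Corollary~\ref{lambda tiene que ser raiz} for (3)--(5), and the vanishing of the leading term under $y=\lambda x^{\delta_\tau}$ for (6). The only (harmless) variation is in the converse of (3), where you certify that $\tau^*$ is a final minor $\pi$-root directly from the genericity of $\xi$ and the degree count $\deg f_{P,\tau^*}=\deg f_{P_y,\tau^*}+1\ge 2$, while the paper instead compares $\tau^*$ with the final $\pi$-root of some $\alpha\in D_{\tau^*}^P$ via Remark~\ref{orden de los lambda}.
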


\begin{proof}
(1)\enspace This follows from the fact that the morphism $\partial_y$ satisfies $\partial_y(x^iy^j)=jx^iy^{j-1}$ for $j>0$, and so
$$
v_{\rho,\sigma}(\partial_y(x^iy^j))=v_{\rho,\sigma}(x^i y^j)-\sigma.
$$
Hence $\ell_{\rho,\sigma}(\partial_y P)=\partial_y \ell_{\rho,\sigma}(P)$ when $\partial_y \ell_{\rho,\sigma}(P)\ne 0$, and so
$$
\ell_{\rho,\sigma}(P_y)=\partial_y(x^{u}g(z))=x^{u-\sigma/\rho}g'(z),
$$
because $\deg(g)>0$.

\smallskip

\noindent (2)\enspace By items~(1)a) and~(2)c) of Proposition~\ref{raices menores y raices mayores}, we know that
 $\alpha$ is a minor root if and only if $\lambda_\tau^Q=0$ for the $\pi$-root $\tau$ associated to $\alpha$.
This proves~(2), since $\lambda_\tau^Q=\deg_x(Q(\alpha))$ by Lemma~\ref{Q en alpha es Q en tau}.

\smallskip

\noindent (3)\enspace Define
$$
\delta_\beta\coloneqq \min\{\deg_x(\alpha - \beta) | \alpha\in\mathcal{R}(P)\}.
$$
Write
$$
\beta=\sum_{j>\delta_\beta} a_j x^j+\lambda x^{\delta_\beta}+\sum_{j<\delta_\beta} a_j x^j
$$
Then $\tau\coloneqq \sum_{j>\delta_\beta} a_j x^j+\pi x^{\delta_\beta}$ is a $\pi$-root of $P$. Since $0<|D_\tau^{P_y}|= |D_\tau^{P}|-1$,
by Remark~\ref{raiz con lambda negativo} we know that $\lambda_\tau^P\ge 0$.  Take $\alpha\in D_\tau^{P}$ and let $\tau_1$ be the final $\pi$-root of $P$
associated with $\alpha$. We have
$\delta_\alpha\le \delta_\beta$ (since $\delta_\beta<\delta_\alpha$ implies $|D_{\tau}^P|=1$), hence $\lambda_{\tau_1}\le \lambda_{\tau}$ and so $\lambda_\tau^P= 0$ if and
only if $\tau=\tau_1$ is a final minor $\pi$-root of $P$.

We claim that $\lambda_\tau^P=\deg_x(P(\beta))$.  In fact,
$f_{P,\tau}(\lambda)\ne 0$ since otherwise, by Proposition~\ref{factor genera pi raiz}, there exists $j_1<\delta_\beta$ such that
the $\pi$-approximation of $\beta$ up to $j_1$ is a $\pi$-root of $P$, contradicting the minimality of $\delta_\beta$.
Hence, by Proposition~\ref{P en tau coincide con P en alpha} we have $\deg_x(P(\beta))=\lambda_\tau^P\ge 0$. Hence, if $\deg_x(P(\beta))=0$,
then $\beta\in D_\tau^{P_y}$ and  $\tau\in P_m$.
On the other hand, if $\beta\in D_{\tau_2}^{P_y}$ for some  $\tau_2\in P_m$,  then $\delta_\beta\le \delta_{\tau_2}$, hence
$0\le \lambda_{\tau}\le \lambda_{\tau_2}=0$, and so $0=\lambda_\tau^P=\deg_x(P(\beta))$, as desired.

\smallskip

\noindent (4)\enspace Let $\tau\coloneqq \sum_{j>\delta_\alpha} a_j x^j+\pi x^{\delta_\alpha}$ be the minor final $\pi$-root of $P$ associated with $\alpha$.
Write
$$
\alpha=\sum_{j>\delta_\alpha} a_j x^j+\lambda x^{\delta_\alpha}+\sum_{j<\delta_\alpha} a_j x^j
$$
Since $f_{P,\tau}(\lambda)=0$ and $f_{P,\tau}$ has no multiple roots, we have $f'_{P,\tau}(\lambda)\ne 0$.
But by item~(1) we have $f_{P_y,\tau}=f'_{P,\tau}$, and so, by Proposition~\ref{P en tau coincide con P en alpha},
we obtain $\lambda_\tau^{P_y}=\deg_x(P_y(\tau))=\deg_x(P_y(\alpha))$. Using again item~(1) we have
$\lambda_\tau^{P_y}=\lambda_\tau-\sigma/\rho$, and since $\lambda_\tau=0$, the result follows immediately.

\smallskip

\noindent (5)\enspace Write $\tau\coloneqq \sum_{j>j_0} a_j x^j+\pi x^{j_0}$ and let $\beta\in D_{\tau}^{P_y}$.
Write
$$
\beta=\sum_{j>j_0} a_j x^j+\lambda x^{j_0}+\sum_{j<j_0} a_j x^j.
$$
Since by Corollary~\ref{lambda tiene que ser raiz} we know that $f_{P_y,\tau}(\lambda)=0$, and $f_{P_y,\tau}$ is coprime with $f_{Q_y,\tau}$, we have $f_{Q_y,\tau}(\lambda)\ne 0$.
Hence, by Proposition~\ref{P en tau coincide con P en alpha},
we obtain $\lambda_\tau^{Q_y}=\deg_x(Q_y(\tau))=\deg_x(Q_y(\beta))$ and by item~(1) we have
$\lambda_\tau^{Q_y}=\lambda^{Q}_\tau-\sigma/\rho$, and since $\lambda^Q_\tau=0$, the result follows immediately.

\smallskip

\noindent (6)\enspace Write $\tau\coloneqq \sum_{j>j_0} a_j x^j+\pi x^{j_0}$.
Let $\lambda\in K$ such that $f_{Q_y,\tau}(\lambda)=0=f_{P_y,\tau}(\lambda)$. By Proposition~\ref{factor genera pi raiz}
there exist $j_1,j_2<j_0$ such that
$\tau_1\coloneqq \sum_{j>j_0} a_j x^j+\lambda x^{j_0}+\pi x^{j_1}$  is a $\pi$-root of $P_y$ and
$\tau_2\coloneqq \sum_{j>j_0} a_j x^j+\lambda x^{j_0}+\pi x^{j_2}$  is a $\pi$-root of $Q_y$. Take $j_3\coloneqq \max\{j_1,j_2\}$ and
so $\tau_3\coloneqq \sum_{j>j_0} a_j x^j+\lambda x^{j_0}+\pi x^{j_3}$  is a $\pi$-root of $Q_y$ and $P_y$.
Take $\beta\in D_{\tau_3}^{P_y}$. Then
$$
\beta=\sum_{j>j_0} a_j x^j+\lambda x^{j_0}+\sum_{j<j_0} a_j x^j,
$$
and set $T\coloneqq \lambda x^{j_0}+\sum_{j<j_0} a_j x^j$. Then
$$
Q_y(\beta)=\ev_{y=T}(\varphi_\tau(Q_y))=\ev_{y=\lambda x^{j_0}}(\ell_{\rho,\sigma}(\varphi_\tau(Q_y)))+R=x^{\lambda_\tau^{Q_y}}f_{Q_y,\tau}(\lambda)+R
$$
for some $R$ with $v_{\rho,\sigma}(R)<v_{\rho,\sigma}(\varphi_\tau(Q_y)))=\rho \lambda_\tau^{Q_y}$. Since $f_{Q_y,\tau}(\lambda)=0$, we obtain
$$
\rho \deg_x(Q_y(\beta))=v_{\rho,\sigma}(Q_y(\beta))<\rho \lambda_\tau^{Q_y}.
$$
Since by item~(1) we know that
$\lambda_\tau^{Q_y}=\lambda^{Q}_\tau-\sigma/\rho$, and since $\lambda^Q_\tau=0$ we have
$$
\deg_x(Q_y(\beta))<-\sigma/\rho,
$$
as desired.
\end{proof}

\begin{lemma} \label{derivada del jacobiano}
For any $\alpha\in K((x^{-1/l}))$ we have
\begin{equation*}
  Q_y(\alpha)\frac d{dx}P(\alpha)-P_y(\alpha)\frac{d}{dx}Q(\alpha)\in K^{\times}
\end{equation*}
\end{lemma}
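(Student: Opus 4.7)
The plan is to apply the chain rule for formal power series (or Laurent-Puiseux series) in $x$. Concretely, for any $\alpha\in K((x^{-1/l}))$ one has
$$
\frac{d}{dx}P(\alpha) = P_x(\alpha) + P_y(\alpha)\,\alpha' \quad\text{and}\quad \frac{d}{dx}Q(\alpha) = Q_x(\alpha) + Q_y(\alpha)\,\alpha',
$$
where $\alpha' = \frac{d\alpha}{dx}$. Substituting these into the expression on the left hand side and expanding, the cross terms containing $\alpha'$ cancel and one is left with
$$
Q_y(\alpha)\frac{d}{dx}P(\alpha) - P_y(\alpha)\frac{d}{dx}Q(\alpha) = P_x(\alpha)Q_y(\alpha) - P_y(\alpha)Q_x(\alpha) = [P,Q](\alpha).
$$

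Now recall from the beginning of Subsection~\ref{subseccion 21} that $(P_0,Q_0)$ is a Jacobian pair, so $[P_0,Q_0]\in K^{\times}$, and $(P,Q) = (\psi(P_0),\psi(Q_0))$ where $\psi$ is the automorphism of $L$ defined by $\psi(x)=x+y$, $\psi(y)=y$. Since $\psi$ has Jacobian $1$, the chain rule for $\psi$ yields $[P,Q]=\psi([P_0,Q_0])=[P_0,Q_0]\in K^{\times}$. In particular $[P,Q]$ is a nonzero constant, so its value at $y=\alpha$ is itself, and the result follows.

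The only step that requires a little care is the justification that the chain rule identity used above is valid for $\alpha\in K((x^{-1/l}))$ rather than merely for polynomial $\alpha$. This is straightforward: $P(x,\alpha)$ is a finite $K$-linear combination of terms $x^i\alpha^j$, and term-by-term differentiation together with the product rule gives exactly the claimed formula; no convergence issues arise because the series $\alpha$ has only finitely many terms of each fixed $x$-degree.
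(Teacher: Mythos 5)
Your proof is correct: the chain rule in $K((x^{-1/l}))$ gives $Q_y(\alpha)\frac{d}{dx}P(\alpha)-P_y(\alpha)\frac{d}{dx}Q(\alpha)=\bigl(P_xQ_y-Q_xP_y\bigr)(\alpha)=[P,Q](\alpha)$, and $[P,Q]=\psi([P_0,Q_0])=[P_0,Q_0]\in K^{\times}$ since $\psi$ has Jacobian $1$. The paper actually leaves the proof environment of this lemma empty, so there is nothing to compare against; your argument is the standard one and evidently the intended proof, with the formal-derivation justification of the chain rule for Laurent--Puiseux series handled adequately (note only that each monomial $x^j$ with $j\in\frac1l\mathds{Z}$ differentiates formally without any convergence issue, which is what matters, rather than finiteness of terms in each degree).
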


\begin{proof}

\end{proof}

\begin{theorem}\label{numero de interseccion con raices menores}
Set $I_m=1-\sum_{\tau\in P_m}(\delta_{\tau}+1)$. Then $I_m\le I(P,Q)$. We also have
\begin{equation}\label{Inter 1}
I(P,P_yQ)= \deg(P)-\sum_{\tau\in P_m}| D_\tau^P|(1+\delta_\tau).
\end{equation}
\end{theorem}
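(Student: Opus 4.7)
The plan is to prove~\eqref{Inter 1} first, then derive the inequality $I_m\le I(P,Q)$ from it together with an upper bound on $I(P,P_y)$. For~\eqref{Inter 1}, I would start from
$$
I(P,P_y Q) = I(P,P_y) + I(P,Q) = \sum_{\alpha\in\mathcal{R}(P)}\bigl(\deg_x(P_y(\alpha))+\deg_x(Q(\alpha))\bigr),
$$
the first equality being the multiplicativity of the resultant in $y$. For each $\alpha\in\mathcal{R}(P)$, since $\frac{d}{dx}P(\alpha)=0$, Lemma~\ref{derivada del jacobiano} gives $P_y(\alpha)\cdot\frac{d}{dx}Q(\alpha)\in K^{\times}$, so $\deg_x(P_y(\alpha))=-\deg_x(\frac{d}{dx}Q(\alpha))$. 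For $\alpha\in D_\tau^P$ with $\tau\in P_M$, Lemma~\ref{Q en alpha es Q en tau} and Proposition~\ref{raices menores y raices mayores}(2) give $\deg_x(Q(\alpha))=\lambda_\tau^Q>0$ and hence $\deg_x(\frac{d}{dx}Q(\alpha))=\lambda_\tau^Q-1$, making the summand equal to $1$. For $\alpha\in D_\tau^P$ with $\tau\in P_m$, Proposition~\ref{raices menores y raices mayores}(1) gives $\deg_x(Q(\alpha))=0$ and Lemma~\ref{lema auxiliar}(4) gives $\deg_x(P_y(\alpha))=-\delta_\tau$, so the summand equals $-\delta_\tau$. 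Summing and using $\sum_{\tau\in P_M}|D_\tau^P|+\sum_{\tau\in P_m}|D_\tau^P|=\deg(P)$ produces~\eqref{Inter 1}.

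Combined with~\eqref{Inter 1}, the target inequality $I_m\le I(P,Q)=I(P,P_yQ)-I(P,P_y)$ is equivalent to
$$
I(P,P_y)\le \deg(P)-1-\sum_{\tau\in P_m}(|D_\tau^P|-1)(1+\delta_\tau).
$$
To prove this I would compute $I(P,P_y)=\sum_{\beta\in\mathcal{R}(P_y)}\deg_x(P(\beta))$ by splitting $\mathcal{R}(P_y)=A\sqcup A^c$, where $A\coloneqq\bigsqcup_{\tau\in P_m}D_\tau^{P_y}$; the disjointness of this union and the count $|A|=\sum_{\tau\in P_m}(|D_\tau^P|-1)$ follow from the argument in the paragraph after Definition~\ref{major and minor roots} (applied via $f_{P_y,\tau}=f'_{P,\tau}$, which is coprime to $f_{P,\tau}$ because $\tau\in P_m$ is final), together with Lemma~\ref{lema auxiliar}(1) and Proposition~\ref{proposition definition pi root}. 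By Lemma~\ref{lema auxiliar}(3) every $\beta\in A$ contributes~$0$. For $\beta\in A^c$, $P_y(\beta)=0$ and $\deg_x(P(\beta))\ne 0$, so Lemma~\ref{derivada del jacobiano} combined with $\deg_x(\frac{d}{dx}P(\beta))=\deg_x(P(\beta))-1$ gives $\deg_x(Q_y(\beta))=1-\deg_x(P(\beta))$, whence
$$
I(P,P_y)=|A^c|-\sum_{\beta\in A^c}\deg_x(Q_y(\beta)),\qquad |A^c|=\deg(P)-1-\sum_{\tau\in P_m}(|D_\tau^P|-1).
$$

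What remains is a lower bound on $\sum_{\beta\in A^c}\deg_x(Q_y(\beta))$, which I would obtain from two consequences of $[P,Q]\in K^{\times}$. First, any common factor of $P_y$ and $Q_y$ in $K[x,y]$ divides $P_xQ_y-P_yQ_x=[P,Q]\in K^{\times}$ and is therefore a unit, so $\Res_y(P_y,Q_y)\in K[x]\setminus\{0\}$ and consequently $I(P_y,Q_y)=\sum_{\beta\in\mathcal{R}(P_y)}\deg_x(Q_y(\beta))\ge 0$. Second, for every $\beta\in D_\tau^{P_y}$ with $\tau\in P_m$, applying Proposition~\ref{P en tau coincide con P en alpha} to $Q_y$ yields the uniform estimate $\deg_x(Q_y(\beta))\le\lambda_\tau^{Q_y}=-\delta_\tau$: if the coefficient $\lambda$ of $\beta$ at $x^{\delta_\tau}$ satisfies $f_{Q_y,\tau}(\lambda)\ne 0$ one gets equality, while if $f_{Q_y,\tau}(\lambda)=0$ the leading term of the expansion of $Q_y(\beta)$ cancels and the inequality becomes strict. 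Summing gives $\sum_{\beta\in A}\deg_x(Q_y(\beta))\le -\sum_{\tau\in P_m}(|D_\tau^P|-1)\delta_\tau$, and subtracting this from $I(P_y,Q_y)\ge 0$ produces $\sum_{\beta\in A^c}\deg_x(Q_y(\beta))\ge\sum_{\tau\in P_m}(|D_\tau^P|-1)\delta_\tau$, which yields the required upper bound on $I(P,P_y)$ and hence $I_m\le I(P,Q)$.

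The main obstacle I expect is the uniform estimate $\deg_x(Q_y(\beta))\le -\delta_\tau$ for \emph{every} $\beta\in D_\tau^{P_y}$, whereas Lemma~\ref{lema auxiliar}(6) only provides one such $\beta$ with strict inequality in the non-coprime case. The strict inequality that can occur for some $\beta$, together with the potential slack in $I(P_y,Q_y)\ge 0$, are precisely the two sources of nonequality that turn Xu's equality into the inequality $I_m\le I(P,Q)$ and account for the drop from the weighted sum $\sum_{\tau\in P_m}|D_\tau^P|(1+\delta_\tau)$ appearing in~\eqref{Inter 1} to the unweighted sum $\sum_{\tau\in P_m}(1+\delta_\tau)$ in the definition of $I_m$.
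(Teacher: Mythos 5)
Your argument is correct and, in substance, follows the paper's route: you prove~\eqref{Inter 1} exactly as the paper does (splitting $\mathcal{R}(P)$ over minor and major final $\pi$-roots and invoking Lemma~\ref{derivada del jacobiano}, Lemma~\ref{Q en alpha es Q en tau}, Proposition~\ref{raices menores y raices mayores} and Lemma~\ref{lema auxiliar}(2),(4)), and your bound on $I(P,P_y)$ is an algebraic rearrangement of the paper's inequality~\eqref{Inter 2}. Still, two points of divergence are worth recording. First, the paper estimates $I(P_y,PQ_y)=\sum_{\beta}\deg_x(P(\beta)Q_y(\beta))$ termwise and then subtracts $I(P_y,Q_y)$, which it shows is exactly $0$ from the Jacobian identity $\Res_y(P_y,Q_yP_x)=\prod_{\beta}Q_y(\beta)P_x(\beta)\in K^{\times}$; you instead compute $I(P,P_y)=\sum_{\beta}\deg_x(P(\beta))$ directly over $A\sqcup A^c$ and only need $I(P_y,Q_y)\ge 0$, obtained more cheaply from the coprimality of $P_y$ and $Q_y$ (any common factor divides $[P,Q]$). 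The weaker input suffices because the target is an inequality anyway. Second, where the paper cites items (5) and (6) of Lemma~\ref{lema auxiliar} for the estimate $\deg_x(Q_y(\beta))\le-\delta_\tau$ on all of $D_\tau^{P_y}$ --- and item (6) as stated only produces one such $\beta$ in the non-coprime case --- you prove the uniform bound directly via the evaluation argument behind Proposition~\ref{P en tau coincide con P en alpha} (equality when $f_{Q_y,\tau}$ does not vanish at the relevant coefficient, a strict drop by cancellation when it does), which is precisely the estimate the paper's display actually uses; your version is therefore slightly more careful at this step. The remaining ingredients (the disjointness of the sets $D_\tau^{P_y}$ for $\tau\in P_m$ and the count $|D_\tau^{P_y}|=|D_\tau^P|-1$) are used, at the same level of detail, in both proofs.
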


\begin{proof}
It suffices to prove~\eqref{Inter 1}
and
\begin{equation}\label{Inter 2}
  I(P,P_y)\le \deg(P)-1-\sum_{\tau\in P_m}(|D_\tau^P|-1)(\delta_\tau+1).
\end{equation}
In fact, equalities~\eqref{Inter 1} and~\eqref{Inter 2} yield
$$
I(P,Q)=I(P,P_yQ)-I(P,P_y)=1-\sum_{\tau\in P_m}(\delta_{\tau}+1),
$$
as desired.

\smallskip

\noindent Proof of equality~\eqref{Inter 1}.\enspace
By Lemma~\ref{derivada del jacobiano}, for each $\alpha\in\mathcal{R}(P)$ we have $P_y(\alpha)\frac{d}{dx}Q(\alpha)\in K^{\times}$. Moreover,
by Lemma~\ref{lema auxiliar}(2), if $\alpha$ is a major root, then $\deg_x(P_y(\alpha)Q(\alpha))=1$. On the other hand,
if $\alpha$ is a minor root, then by Proposition~\ref{raices menores y raices mayores}(1)a),
Lemma~\ref{Q en alpha es Q en tau} and Lemma~\ref{lema auxiliar}(4), we have
$$
\deg_x(P_y(\alpha)Q(\alpha))=\deg_x(P_y(\alpha))=-\delta_\alpha=-\delta_\tau,
$$
where $\tau$ is the minor final $\pi$-root associated with $\alpha$.
Using this facts we obtain
\begin{equation*}\label{interseccion de P y P sub y y Q}
\begin{aligned}
I(P,P_yQ) & =\sum_{\alpha\in\mathcal{R}(P)} \deg_x(P_y(\alpha)Q(\alpha))\\
& = \sum_{\tau\in P_m}\sum_{\alpha\in D_\tau^P} \deg_x(P_y(\alpha)Q(\alpha))+ \sum_{\tau\in P_M}\sum_{\alpha\in D_\tau^P} \deg_x(P_y(\alpha)Q(\alpha))\\
& = \sum_{\tau\in P_m}| D_\tau^P|(-\delta_\tau)+\sum_{\tau\in P_M}| D_\tau^P|+\sum_{\tau\in P_m}| D_\tau^P|-\sum_{\tau\in P_m}| D_\tau^P|\\
&= \deg(P)-\sum_{\tau\in P_m}| D_\tau^P|(1+\delta_\tau),
\end{aligned}
\end{equation*}
where the first equality is obtained as in the proof of Theorem~\ref{Interseccion con raices mayores}.

\smallskip

\noindent Proof of inequality~\eqref{Inter 2}.\enspace
By Lemma~\ref{derivada del jacobiano}, for each $\beta\in\mathcal{R}(P_y)$, we have $Q_y(\beta)\frac{d}{dx}P(\beta)\in K^{\times}$.
Define
$$
P_{y,m}\coloneqq \{\beta\in\mathcal{R}(P_y) : \text{ there exists a minor final $\pi$-root $\tau$ of $P$, such that $\beta\in D_\tau^{P_y}$}\}.
$$
Then,
by Lemma~\ref{lema auxiliar}(3), if $\beta$ is not in $P_{y,m}$, then $\deg_x(Q_y(\beta)P(\beta))=1$.
On the other hand, if $\beta$ is in $P_{y,m}$,
then by items~(3), (5) and~(6) of Lemma~\ref{lema auxiliar}, we have
$$
\deg_x(P(\beta)Q_y(\beta))=\deg_x(Q_y(\beta))\le-\delta_\tau,
$$
where $\tau$ is the minor final $\pi$-root associated with $\beta$.
Using this facts we obtain
\begin{equation}\label{interseccion de P y P y Q sub y}
\begin{aligned}
I(P_y,PQ_y) & =\sum_{\beta\in\mathcal{R}(P_y)} \deg_x(P(\beta)Q_y(\beta))\\
& = \sum_{\tau\in P_m}\sum_{\beta\in D_\tau^{P_y}} \deg_x(P(\beta)Q_y(\beta))+ \sum_{\beta\notin P_{y,m}} \deg_x(P(\beta)Q_y(\beta))\\
& \le \sum_{\tau\in P_m}| D_\tau^{P_y}|(-\delta_\tau)+\deg(P_y)-\sum_{\tau\in P_m}| D_\tau^{P_y}|\\
&= \deg(P)-1-\sum_{\tau\in P_m}| D_\tau^{P_y}|(1+\delta_\tau)\\
&= \deg(P)-1-\sum_{\tau\in P_m}(| D_\tau^{P}|-1)(1+\delta_\tau).
\end{aligned}
\end{equation}
Since, by the Jacobian condition,
$$
\Res_y(P_y,Q_y )\Res_y(P_y, P_x)=\Res_y(P_y,Q_y P_x) = \prod_{\beta\in\mathcal{R}(P_y)}Q_y(\beta)P_x(\beta)=1,
$$
we have $I(P_y,Q_y)=0$, and
so~\eqref{interseccion de P y P y Q sub y} yields inequality~\eqref{Inter 2}.
\end{proof}

\begin{bibdiv}
\begin{biblist}

\bib{A}{book}{
   author={Abhyankar, S. S.},
   title={Lectures on expansion techniques in algebraic geometry},
   series={Tata Institute of Fundamental Research Lectures on Mathematics
   and Physics},
   volume={57},
   note={Notes by Balwant Singh},
   publisher={Tata Institute of Fundamental Research},
   place={Bombay},
   date={1977},
   pages={iv+168},
   review={\MR{542446 (80m:14016)}},
}

\bib{CN}{article}{
   author={Cassou-Nogu{\`e}s, Pierrette},
   title={Newton trees at infinity of algebraic curves},
   conference={
      title={Affine algebraic geometry},
   },
   book={
      series={CRM Proc. Lecture Notes},
      volume={54},
      publisher={Amer. Math. Soc., Providence, RI},
   },
   date={2011},
   pages={1--19},
   review={\MR{2768630 (2012i:14034)}},
}

\bib{D}{article}{
   author={Dixmier, Jacques},
   title={Sur les alg\`ebres de Weyl},
   language={French},
   journal={Bull. Soc. Math. France},
   volume={96},
   date={1968},
   pages={209--242},
   issn={0037-9484},
   review={\MR{0242897 (39 \#4224)}},
}

\bib{E}{book}{
   author={Eisenbud, David},
   title={Commutative algebra},
   series={Graduate Texts in Mathematics},
   volume={150},
   note={With a view toward algebraic geometry},
   publisher={Springer-Verlag, New York},
   date={1995},
   pages={xvi+785},
   isbn={0-387-94268-8},
   isbn={0-387-94269-6},
   review={\MR{1322960 (97a:13001)}},
   doi={10.1007/978-1-4612-5350-1},
}

\bib{GGV1}{article}{
   author={Valqui, Christian},
   author={Guccione, Jorge A.},
   author={Guccione, Juan J.},
   title={On the shape of possible counterexamples to the Jacobian
   Conjecture},
   journal={J. Algebra},
   volume={471},
   date={2017},
   pages={13--74},
   issn={0021-8693},
   review={\MR{3569178}},
}
	
\bib{GGV2}{article}{
author={Guccione, Jorge Alberto},
author={Guccione, Juan Jos\'e},
author={Valqui, Christian},
   title={The Two-Dimensional Jacobian Conjecture and the
Lower Side of the Newton Polygon},
   eprint={arXiv:1111.6100},
   }

\bib{GGV3}{article}{
author={Guccione, Jorge Alberto},
author={Guccione, Juan Jos\'e},
author={Valqui, Christian},
   title={A system of polynomial equations related to the Jacobian Conjecture},
   eprint={arXiv:1406.0886},
   }

\bib{GGHV}{article}{
author={Guccione, Jorge Alberto},
author={Guccione, Juan Jos\'e},
author={Valqui, Christian},
author={Horruitiner, Rodrigo},
   title={Some algorithms related to the Jacobian Conjecture},
   eprint={arXiv:1708.07936},
   }

\bib{H}{article}{
   author={Heitmann, R.},
   title={On the Jacobian conjecture},
   journal={Journal of Pure and Applied Algebra},
   volume={64},
   date={1990},
   pages={35--72},
   issn={0022-4049},
   review={\MR{1055020 (91c :14018)}},
}

\bib{J}{article}{
   author={Joseph, A},
   title={The Weyl algebra -- semisimple and nilpotent elements},
   journal={American Journal of Mathematics},
   volume={97},
   date={1975},
   pages={597--615},
   issn={0002-9327},
   review={\MR{0379615 (52 :520)}},
}

\bib{K}{article}{
   author={Keller, Ott-Heinrich},
   title={Ganze Cremona-Transformationen},
   language={German},
   journal={Monatsh. Math. Phys.},
   volume={47},
   date={1939},
   number={1},
   pages={299--306},
   issn={0026-9255},
   review={\MR{1550818}},
   doi={10.1007/BF01695502},
}

\bib{M}{article}{
   author={Moh, T. T.},
   title={On the Jacobian conjecture and the configurations of roots},
   journal={J. Reine Angew. Math.},
   volume={340},
   date={1983},
   pages={140--212},
   issn={0075-4102},
   review={\MR{691964 (84m:14018)}},
}

\bib{vdE}{book}{
   author={van den Essen, Arno},
   title={Polynomial automorphisms and the Jacobian conjecture},
   series={Progress in Mathematics},
   volume={190},
   publisher={Birkh\"auser Verlag},
   place={Basel},
   date={2000},
   pages={xviii+329},
   isbn={3-7643-6350-9},
   review={\MR{1790619 (2001j:14082)}},
   doi={10.1007/978-3-0348-8440-2},
}

\bib{X}{article}{
author={Xu,Yansong},
   title={Intersection Numbers and the Jacobian Conjecture},
   eprint={arXiv:1604.07683v2},
   }

\end{biblist}
\end{bibdiv}

\end{document}